\documentclass{amsart}
\usepackage{amssymb, amsfonts, lacromay}
\usepackage{mathrsfs,comment}
\usepackage[usenames,dvipsnames]{color}
\usepackage[normalem]{ulem}
\usepackage{url}
\usepackage{tikz-cd}
\usepackage[all,arc,2cell]{xy}
\UseAllTwocells
\usepackage{enumerate}

\usepackage{todonotes}
\usepackage{hyperref}  
\hypersetup{%
  bookmarksnumbered=true,%
  bookmarks=true,%
  colorlinks=true,
  linkcolor=blue,%
  citecolor=blue,%
  filecolor=blue,%
  menucolor=blue,%
  pagecolor=blue,%
  urlcolor=blue,%
  pdfnewwindow=true,%
  pdfstartview=FitBH}   
  
\usepackage[inline]{showlabels}

\def\makeautorefname#1#2{\expandafter\def\csname#1autorefname\endcsname{#2}}
\def\equationautorefname~#1\null{(#1)\null}
\makeautorefname{footnote}{footnote}%
\makeautorefname{item}{item}%
\makeautorefname{figure}{Figure}%
\makeautorefname{table}{Table}%
\makeautorefname{part}{Part}%
\makeautorefname{appendix}{Appendix}%
\makeautorefname{chapter}{Chapter}%
\makeautorefname{section}{Section}%
\makeautorefname{subsection}{Section}%
\makeautorefname{subsubsection}{Section}%
\makeautorefname{theorem}{Theorem}%
\makeautorefname{thm}{Theorem}%
\makeautorefname{sta}{Statement}%
\makeautorefname{cor}{Corollary}%
\makeautorefname{lem}{Lemma}%
\makeautorefname{prop}{Proposition}%
\makeautorefname{pro}{Property}
\makeautorefname{conj}{Conjecture}%
\makeautorefname{defn}{Definition}%
\makeautorefname{notn}{Notation}
\makeautorefname{notns}{Notations}
\makeautorefname{rem}{Remark}%
\makeautorefname{rmk}{Remark}%
\makeautorefname{rems}{Remarks}%
\makeautorefname{quest}{Question}%
\makeautorefname{exmp}{Example}%
\makeautorefname{ax}{Axiom}%
\makeautorefname{claim}{Claim}%
\makeautorefname{assn}{Assumption}%
\makeautorefname{asses}{Assumptions}%
\makeautorefname{con}{Construction}%
\makeautorefname{prob}{Problem}%
\makeautorefname{warn}{Warning}%
\makeautorefname{obs}{Observation}%
\makeautorefname{conv}{Convention}%
\makeautorefname{cont}{Context}%

\newtheorem{thm}{Theorem}[section]

\newtheorem{cor}{Corollary}[section]
\newtheorem{prop}{Proposition}[section]
\newtheorem{lem}{Lemma}[section]

\theoremstyle{definition}
\newtheorem{defn}{Definition}[section]

\newtheorem{con}{Construction}[section]
\newtheorem{exmp}{Example}[section]
\newtheorem{notn}{Notation}[section]
\newtheorem{notns}{Notations}[section]

\newtheorem{quest}{Question}[section]
\newtheorem{rem}{Remark}[section]

\newtheorem{sch}{Scholium}[section]

\newtheorem{conv}{Convention}[section]
\newtheorem{cont}{Context}[section]

\newcounter{assn}[section]
\renewcommand{\theassn}{\Alph{assn}}

\makeatletter
\let\c@cont=\c@thm
\let\c@conv=\c@thm
\let\c@obs=\c@thm
\let\c@sta=\c@thm
\let\c@cor=\c@thm
\let\c@prop=\c@thm
\let\c@lem=\c@thm
\let\c@prob=\c@thm
\let\c@con=\c@thm
\let\c@conj=\c@thm
\let\c@defn=\c@thm
\let\c@notn=\c@thm
\let\c@notns=\c@thm
\let\c@exmp=\c@thm
\let\c@ax=\c@thm
\let\c@pro=\c@thm
\let\c@quest=\c@thm
\let\c@ass=\c@thm
\let\c@warn=\c@thm
\let\c@rem=\c@thm
\let\c@sch=\c@thm
\let\c@equation\c@thm
\numberwithin{equation}{section}
\makeatother

\definecolor{orange}{rgb}{1,0.5,0}
\newcommand{\pedit}[1]{{\color{red}{#1}}}

\newcommand{\Cmon}{\mathbb{C}}
\newcommand{\Cp}{\mathbb{C}^{\mathrm{pre}}}

\newcommand{\Cpre}{\hat{\mathbb{C}}^{\mathrm{pre}}}
\newcommand{\CpreGH}{\hat{\mathbb{C}}^{\mathrm{pre}}_{G/H}}
\newcommand{\sCpre}{\mathscr{C}^{\mathrm{pre}}}
\newcommand{\sCprest}{\mathscr{C}^{\mathrm{pre}}_{\star}}
\newcommand{\sCpGH}{\mathscr{C}^{\mathrm{pre}}_{G/H}}
\newcommand{\sCpreGH}{\mathscr{C}^{\mathrm{pre}}_{G/H}}

\newcommand{\NewC}{\mathscr{C}^{fin}_G}

\newcommand{\bj}{\mathbf{j}}
\newcommand{\bk}{\mathbf{k}}
\newcommand{\bm}{\mathbf{m}}
\newcommand{\bn}{\mathbf{n}}
\newcommand{\bp}{\mathbf{p}}

\newcommand{\OG}{G\mathscr{O}}
\newcommand{\LG}{G\LA}
\newcommand{\LH}{H\LA}

\newcommand{\LRG}{G\tilde{\LA}}
\newcommand{\LRH}{H\tilde{\LA}}
\newcommand{\LRK}{K\tilde{\LA}}
\newcommand{\FRG}{G\tilde{\mathscr{F}}}
\newcommand{\OGop}{G\mathscr{O}^{op}[\sT]}
\newcommand{\Cprea}{\bC^{pre}\big[\OGop\big]}
\newcommand{\Cpa}{\bC^{pre}\big[\OGop\big]}

\newcommand{\generalE}{\mathcal{E}} 
\newcommand{\fixpt}{\mathbb{R}} 
\newcommand{\underlying}{\mathbb{L}} 
\newcommand{\originalE}{\mathcal{E}_{orig}} 
\newcommand{\ourE}{\mathcal{E}} 
\newcommand{\ourEonAlg}{{\mathcal{\sE}_\sC}} 

\title{Orbital presheaves in equivariant infinite loop space theory}

\author{Hana Jia Kong}
\address{School of Mathematical Sciences, Zhejiang University, Hangzhou, China}
\email{hana.jia.kong@gmail.com}

\author{J. Peter May}
\address{Department of Mathematics, The University of Chicago, Chicago, IL 60637}
\email{may@math.uchicago.edu}

\author{Foling Zou}
\address{Institute of Mathematics, Chinese Academy of Sciences, Beijing, China}
\email{zoufoling@amss.ac.cn}

\subjclass{Primary 55P42, 55P43, 55P91;\\
Secondary 18A25, 18E30, 55P48, 55U35}

\begin{document}

\begin{abstract}
Let $G$ be a finite group. Using a new kind of operad, we axiomatize and explore an infinite loop space machine that constructs (genuine) $G$-spectra from suitably structured functors on orbital presheaves (contravariant functors from the orbit category of $G$ to  based spaces). The theory leads unexpectedly to a new operadic description of Mackey functors and hence to a definition of ``topological Mackey functors" and a construction of their associated $G$-spectra.  It also leads to Picard $G$-spectra, Azumaya ring $G$-spectra, and Brauer $G$-spectra. These constructions raise many unanswered questions.
\end{abstract} 

\maketitle

\tableofcontents

\section*{Introduction}

Constructing spectra out of structured space level data has long been important in algebraic topology. Methods of construction are called infinite loop space
machines.  There are two main flavors: the operadic machine and the Segal machine.  Introducing a common generalization, they were long ago proven to be equivalent \cite{MT}.  Generalizing such machines equivariantly began in the 1980's and has had a checkered career.  A modern treatment of the equivariant operadic machine is given in \cite{GM3}.  A modern treatment of the equivariant Segal machine and the common generalization, together with a direct point-set level proof of the equivalence of these equivariant machines is given in \cite{MMO}.   The input in these sources is structured $G$-spaces or suitably structured functors to $G$-spaces. The structure is given by actions by monads associated to operads or to categories of operators.  We take $G$ to be finite throughout.

However, the input that one sees naturally is often given by contravariant functors from the orbit category $\OG$ to spaces, not $G$-spaces.    Here the objects and morphisms of $\OG$ are the orbits $G/H$ and the $G$-maps between them.   We call such functors ``orbital presheaves''.   The first infinite loop space machine taking such presheaves as input is in a quite brief 1991 paper of Costenoble and Waner \cite{CW}.   It makes for hard reading for several reasons.  One is that it was written well before its time: equivariant theory was still in its infancy. Another is that it is written combinatorially, with relatively little conceptual foundation.   It relied on a never published preprint of Costenoble, Hauschild, May, and Waner for the operadic $G$-space input machine, on which the operadic presheaf machine relies.   

The work of Costenoble and Waner starts with a (genuine) $E_{\infty}$ operad $\sC$ of $G$-spaces\footnote{We take operads to be reduced, so that $\sC(0)$ is a point.}, as defined\footnote{The original definition is in the generality of compact Lie groups, the motivation already back then being to define and study $E_{\infty}$ ring $G$-spectra in that generality.} in \cite[Chapter VII]{LMS}, and constructs from it a monad $\Cp$ of orbital presheaves that is nicely related to the monad $\bC$ of $G$-spaces associated to $\sC$.  That construction is not easy, but it is beautiful.  We shall rework it conceptually using more modern categorical understanding and a new kind of equivariant operad.

For reasons explained in detail in \cite{KMZ1}, it is most natural here to take the category $G\sS$ of $G$-spectra to refer to Lewis--May $G$-spectra \cite{LMS}. These consist of based $G$-spaces  $E_V$ for finite dimensional sub inner product spaces $V$ of  a complete universe $U$, meaning an inner product space that is the sum of countably many copies of each irreducible representation of $G$, together with $G$-homeomorphisms $E_V \rtarr \OM^{W-V} E_{W}$ for $V\subset W$.   Viewed as an $\infty$-category, it is equivalent to any good modern category of $G$-spectra, but its transparently elementary concrete adjunction $(\SI^{\infty},\OM^{\infty})$, where $\OM^{\infty}E = E_0$, is unavailable with other choices. 

We assume that the operad $\sC$ has a natural action $\vartheta  \colon \bC E_0 \rtarr E_0$ on equivariant infinite loop spaces, where $\bC$ is the monad associated to $\sC$.  The most sensible choice is the equivariant Steiner operad \cite{Steiner, GM3} or its product with any other $E_{\infty}$ $G$-operad.  This places us in an example of the general context  of  \cite[Assumption A]{KMZ1}, and more complete statements of the following recognition principle can be found there or in \cite[\S2.3]{GM3}.

Let $G\sT$ denote the category of based $G$-spaces and based $G$-maps, and let $\bC[G\sT]$ denote the category of $\bC$-algebras in $G\sT$.\footnote{As in \cite{KMZ1}, we ignore the distinction between based $G$-spaces and nondegenerately based $G$-spaces since it is easily circumvented without loss of generality and only obscures the exposition.} 

\begin{thm}\label{machine}  Let $G$ be a finite group. For an $E_{\infty}$ $G$-operad $\sC$ which acts naturally on equivariant infinite loop spaces, there is a functor
 $\mathbb{E}: \bC[G\sT] \rtarr G\sS$ such that, for $X \in  \bC[G\sT]$, there is a natural group completion 
$$\ze\colon X \rtarr \Omega^{\infty}\mathbb{E}X.$$  Therefore $\ze$ is a weak equivalence if $X$ is grouplike, meaning that 
$\pi_0(X^H)$ is a group for  each subgroup $H$ of $G$.
\end{thm}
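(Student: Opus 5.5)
The plan follows the classical operadic (May) machine in its equivariant form, as in \cite{GM3}. Write $\bC_V$ for the monad associated to the $V$-indexed Steiner operad $\sK_V$, or to its product with $\sC$. For $X\in\bC[G\sT]$ we define the $V$-th space of a prespectrum by the monadic two-sided bar construction
$$(\mathbb{E}X)_V = B(\SI^V,\bC_V,X).$$
Here $\bC_V$ acts on $X$ through the projection $\sC\times\sK_V\rtarr\sC$; to avoid a change of operad one may assume from the start that $\sC$ already maps to each $\sK_V$. The functor $\SI^V$ is a right $\bC_V$-functor via the approximation map $\al_V\colon \bC_V\rtarr \OM^V\SI^V$ composed with the evaluation $\SI^V\OM^V\SI^V\rtarr\SI^V$.

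The structure maps $\SI^{W-V}(\mathbb{E}X)_V\rtarr(\mathbb{E}X)_W$ come from $\SI^{W-V}B(\SI^V,\bC_V,X)\cong B(\SI^W,\bC_V,X)\rtarr B(\SI^W,\bC_W,X)$. We then apply spectrification to obtain a Lewis--May $G$-spectrum. Naturality in $X$ is clear, so this gives the functor $\mathbb{E}\colon \bC[G\sT]\rtarr G\sS$.

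For the group completion, we form the zigzag
$$X\simeq B(\bC,\bC,X)\rtarr B(\OM^V\SI^V,\bC_V,X)\rtarr \OM^V B(\SI^V,\bC_V,X).$$
The first map is the standard extra-degeneracy equivalence, which holds levelwise on all fixed points. Under the hypothesis that $\sC$ acts naturally on infinite loop spaces, one can instead construct a direct map $X\rtarr \OM^{\infty}\mathbb{E}X$. Passing to the colimit over $V\subset U$, we must then verify two facts.

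\textbf{(a)} On each $H$-fixed point space, $\colim_V \bC_V Y\rtarr \colim_V\OM^V\SI^VY$ is a group completion. This is the equivariant approximation theorem. It reduces, via the tom Dieck splitting of fixed points of $\bC Y$, to the nonequivariant approximation theorem applied to the Weyl-group pieces. It uses completeness of $U$, so that $V^H$ grows without bound with every needed representation appearing.

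\textbf{(b)} $\OM^V$ commutes with geometric realization of the simplicial spaces $B_\bullet(\SI^V,\bC_V,X)$, up to group completion, after passing to $H$-fixed points. Equivalently, simplicial group completions realize to group completions.

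Step (b) is the main obstacle. In the grouplike case one uses the equivariant version of the May (Bousfield--Friedlander/Segal) realization lemma. This requires $\pi_0$ of the fixed point spaces of each level to be groups and connectivity of the $V^H$-loops for $V$ large. In general one first replaces $X$ by its group completion via a bar construction, for example $\OM B$ on the underlying $E_\infty$ fixed point monoids, and checks compatibility.

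Finally, when each $\pi_0(X^H)$ is a group, a group completion is a weak equivalence on each $X^H$ by definition. Therefore $\ze$ is a weak $G$-equivalence.
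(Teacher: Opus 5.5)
Your architecture (levels $B(\SI^V,\bC_V,X)$ with $\bC_V$ the monad of $\sC\times\mathscr{K}_V$, the zigzag through $B(\bC_V,\bC_V,X)$, and step (a) via the equivariant approximation theorem and a tom Dieck type splitting) is the standard equivariant operadic machine. The paper quotes this result rather than reproving it (\cite[\S2.3]{GM3}, \cite{KMZ1}), and your step (a) is fine. But step (b) has a genuine gap, and (b) is where all the content for non-grouplike $X$ lies.

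First, you have put the difficulty in the wrong place. The map $B(\OM^V\SI^V,\bC_V,X)\rtarr\OM^V B(\SI^V,\bC_V,X)$ is a weak $G$-equivalence for every $X$, not just up to group completion (take $V^G\neq 0$, which is harmless in the colimit). The reason is that the $q$-simplices $\SI^V\bC_V^qX$ have $K$-fixed points $\SI^{V^K}(\bC_V^qX)^K$, which are $(\dim V^K-1)$-connected. That is exactly the hypothesis of the equivariant form of May's realization lemma.

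The statement you call equivalent, that realizations of levelwise group completions are group completions, is a different statement. It is the one you actually need, because (a) only says that the level-$q$ maps $\bC_V\bC_V^qX\rtarr\OM^V\SI^V\bC_V^qX$ become group completions on fixed points after passing to colimits over $V$. Neither of your arguments for it works:
\begin{itemize}
\item In the grouplike case you need the levels to have grouplike fixed points. But $\bC_V^{q+1}X$ is a free algebra and is grouplike only if $X$ is $G$-connected, so grouplikeness of $X$ buys nothing.
\item In general you propose to replace $X$ by $\OM B$ of its fixed point monoids $X^H$. That yields an orbital presheaf, not a $\bC$-algebra in $G\sT$, which is the only input $\bE$ accepts. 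Manufacturing a $\bC$-space with prescribed group-completed fixed points is what the theorem itself provides (and, for presheaf input, the paper's monadic Elmendorf construction). So this step is circular.
\end{itemize}

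The missing ingredient is the following lemma: if $f_\bullet$ is a map of proper simplicial $E_\infty$ spaces that is a group completion in each simplicial degree, then $|f_\bullet|$ is a group completion. Fixed points commute with realization, and $B_\bullet(\bC_V,\bC_V,X)\rtarr B_\bullet(\OM^V\SI^V,\bC_V,X)$ is a map of simplicial $\bC_V$-algebras. So you can apply the lemma on each $H$-fixed point level after passing to the colimit over $V$, where the fixed point spaces are $E_\infty$ and the targets are grouplike.

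One proof of the lemma applies a functorial first delooping $B$:
\begin{itemize}
\item by the group completion theorem, an $E_\infty$ map with grouplike target is a group completion if and only if $B$ of it is an equivalence;
\item $B$ commutes with realization;
\item levelwise equivalences of proper simplicial spaces realize to equivalences.
\end{itemize}
Combined with (a) and the $\OM^V$ realization lemma, this makes $\ze$ a group completion on every fixed point space for all $X$. The grouplike case then needs no separate argument.

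Separately, your aside that $\sC$ may be assumed to map to each $\mathscr{K}_V$ is false. In arity $2$ it would give a $\SI_2$-map from a contractible free $\SI_2$-space to $\mathscr{K}_V(2)\simeq S(V)$ with the antipodal action, contradicting Borsuk--Ulam. The hypothesis that $\sC$ acts on infinite loop spaces is typically realized by a map to $\mathscr{K}_U=\mathrm{colim}_V\mathscr{K}_V$. In the paper it is what makes $B(\bC,\bC,X)\rtarr\OM^\infty\bE X$ a map of $\bC$-algebras (see the remark following the theorem).
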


\begin{rem} While $\ze$ is not a map of $\bC$-algebras, it is the composite of a map of $\bC$-algebras $\overline X \rtarr  \Omega^{\infty}\mathbb{E}X$ and the homotopy inverse of a map of $\bC$-algebras $\overline{X}\rtarr X$ which is a homotopy equivalence of underlying $G$-spaces.  Here $\overline{X} = B(\bC,\bC,X)$ is a homotopically well-behaved approximation of $X$. 
\end{rem}

We say that an  orbital presheaf $\sX$ is grouplike if each $\pi_0\sX(G/H)$ is a group and we say that a map $\sX\rtarr \sY$ from $\sX$ to a grouplike orbital presheaf $\sY$ is a group completion if each $\sX(G/H) \rtarr \sY(G/H)$ is a group completion in the usual\footnote{\cite{KMZ1} gives a new conceptual definition, with and without group actions.} nonequivariant sense: the induced map 
$$ H_*(\sX(G/H))[\pi_0(\sX(G/H))^{-1}] \rtarr H_*(\sY(G/H))$$ 
is an isomorphism for all subgroups $H$ of $G$.  

We obtain the orbital presheaf recognition principle as a corollary.   Write $G\sO^{op}[\sT]$ for the category of orbital presheaves and write  $\Cprea$ for the category of algebras over the monad $\Cp$ in $G\sO^{op}[\sT]$.  That is the domain category of our orbital presheaf infinite loop space machine.  Recall that the fixed point functor $\bR\colon G\sT \rtarr G\sO^{op}[\sT]$ sends a based $G$-space $X$ to the orbital presheaf that sends $G/H$ to $X^H$.  By \autoref{Acom}, $\bR$ restricts on $\bC$-algebras in $G\sT$ to a functor 
$$\bR \colon \bC[G\sT] \rtarr \Cprea.$$  
We shall construct a functor  
\begin{equation}\label{ElmC}
\sE_{\bC}\colon \Cprea \rtarr \bC[G\sT].
\end{equation}
It is a new version of the Elmendorf construction.  It sends a $\Cp$-algebra $\sX$ to a $\bC$-algebra such that $\bR\sE_{\bC}\sX$ is weakly equivalent to $\sX$.  Composing the machine $E$ with  $\sE_{\bC}$ gives the orbital presheaf infinite loop space machine.
 
\begin{thm}
  \label{thm:pre-machine}
For an $E_{\infty}$ $G$-operad $\sC$ which acts naturally on equivariant infinite loop spaces, there is a  composite functor  
$$\mathbb{E}^{pre} = \mathbb{E} \circ \ourEonAlg \colon \Cprea \longrightarrow G\mathscr{S}$$
  such that, for $\sX \in \Cprea$, there is 
\begin{enumerate}[(i)]
\item a levelwise natural weak equivalence between $\bR \ourEonAlg \sX$ and $\sX$;
\item  a natural group completion 
$$\zeta^{pre}=\bR \zeta\colon \bR \ourEonAlg \sX \rtarr \bR \Omega^{\infty}\mathbb{E}\ourEonAlg \sX =   \bR
  \Omega^{\infty}\mathbb{E}^{pre} \sX.$$
Therefore $\zeta^{pre}$ is a levelwise weak equivalence when $\sX$ is grouplike.
\end{enumerate}
\end{thm}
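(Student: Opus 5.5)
The proof is formal once the functor $\ourEonAlg$ of \eqref{ElmC} is built with the right properties, so the plan has two parts.

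First, construct $\ourEonAlg$ as an operadic refinement of the Elmendorf construction. The underlying construction is the classical $\sE\sX = B(\Phi,\OG,\sX)$, built from the coefficient system $\Phi(G/H)=G/H$. It comes with a natural levelwise weak equivalence $\bR\sE\sX \rtarr \sX$, which I would obtain by passing to fixed points inside the bar construction and using the Yoneda-type contraction $B(\OG(-,G/H),\OG,\sX)\simeq \sX(G/H)$.

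To lift this to algebras, I would use the relationship between the monads $\Cp$ and $\bC$, which I take to be set up in \autoref{Acom}. The key input is a natural comparison
$$\bC\sE \;\simeq\; \sE\Cp$$
compatible with the monad structures, or at least a map making $\sE$ carry $\Cp$-algebras to $\bC$-algebras. With it, I would define
$$\ourEonAlg\sX = B(\bC,\bC,\sE\sX) \quad\text{or}\quad \ourEonAlg\sX = \sE B(\Cp,\Cp,\sX),$$
whichever admits a genuine $\bC$-action. Then I would verify (i) by chaining levelwise equivalences. Geometric realization commutes with fixed points, so $\bR$ preserves bar constructions, and the simplicial maps are levelwise equivalences by the Elmendorf equivalence. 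Together with $B(\Cp,\Cp,\sX)\simeq\sX$ via the usual extra degeneracy, this gives a zigzag of natural levelwise weak equivalences between $\bR\ourEonAlg\sX$ and $\sX$. One also needs Reedy cofibrancy or properness of the simplicial objects so that levelwise equivalences realize to equivalences; this holds under the nondegenerate basepoint conventions.

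Part (ii) is then immediate from \autoref{machine}. Applying it to $X=\ourEonAlg\sX$ gives a group completion $\ze\colon X\rtarr \Omega^\infty\mathbb{E}X$ of $G$-spaces. An equivariant group completion is by definition a group completion on all $H$-fixed points, so $\bR\ze$ is levelwise a group completion of orbital presheaves in the sense defined above. If $\sX$ is grouplike, then by (i) each $\pi_0(X^H)\cong\pi_0\sX(G/H)$ is a group, so $X$ is grouplike and $\ze$ is an equivalence. Hence $\zeta^{pre}=\bR\ze$ is a levelwise weak equivalence.

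The main obstacle is the first step: making $\sE$ compatible with the monads. One must show that the $G$-space $\sE\sX$, or a functorial replacement of it, carries a natural $\bC$-action induced from the $\Cp$-action on $\sX$. This is exactly where the specific form of $\Cp$ matters, since $\Cp$ is built from the fixed points $\sC(j)^{\Lambda}$ for graph subgroups $\Lambda$. I would first try to prove a natural isomorphism or equivalence $\bR\bC Y\simeq \Cp\bR Y$ that is compatible with unit and multiplication. I would then transport the algebra structure through the two-sided bar construction, using that $B(\bC,\bC,-)$ accepts input with only a "homotopy" action encoded as a $\Cp$-functor.
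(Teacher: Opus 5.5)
Your handling of (ii) and of the formal part of (i) matches the paper: extra degeneracy, Reedy cofibrancy, and realization commuting with fixed points. But the step you call ``the main obstacle'' is the actual content of the theorem, and your proposal leaves it open. Neither of your candidate definitions works. $B(\bC,\bC,\sE\sX)$ is only defined once $\sE\sX$ is already a $\bC$-algebra. $\sE B(\Cp,\Cp,\sX)$ is just your $\sE$ applied to a $\Cp$-algebra, so it has a $\bC$-action only if you already know how $\sE$ turns $\Cp$-actions into $\bC$-actions. The isomorphism $\bR\bC\cong\Cp\bR$ of \autoref{prop:phi-C-commute} and \autoref{Acom} does not supply this, because $\sE$ is not inverse to $\bR$. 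It only yields a zigzag $\bR\sE\Cp\sX\to\Cp\sX\leftarrow\Cp\bR\sE\sX\cong\bR\bC\sE\sX$ of orbital presheaves. That is not a natural point-set map of $G$-spaces that could be strictly compatible with units and products.

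What is needed is a natural map $\lambda\colon\ourE\Cp\to\bC\ourE$, in that direction, not one making $\sE$ itself carry $\Cp$-algebras to $\bC$-algebras. It must satisfy $\lambda\circ\ourE\eta=\eta$ and $\mu\circ\bC\lambda\circ\lambda=\lambda\circ\ourE\mu$. Then $\bC\ourE$ is a right $\Cp$-functor via $\mu\circ\bC\lambda$, and the correct definition is $\ourEonAlg\sX=B(\bC\ourE,\Cp,\sX)$. This is a $\bC$-algebra as the realization of a simplicial $\bC$-algebra. Part (i) then follows from $\ourE\sX\leftarrow B(\ourE\Cp,\Cp,\sX)\to B(\bC\ourE,\Cp,\sX)$, once $\lambda$ is known to be a weak equivalence; the paper gets this from the compatibility of $\bR\lambda$ with $\nu$ and $\Cp\nu$.

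The idea that makes $\lambda$ constructible is to enlarge the Elmendorf construction. Instead of indexing on $\OG$, the paper uses $\ourE(\sX)=B(\sX\circ\pi,G\Lambda_{\star}^{op},\mathbb{O}_+\circ\pi)$. This is indexed on pairs $(G/H,T)$ with $T$ a finite $G$-set over $G/H$, with morphisms given by pullbacks. An adjunction of categories of elements shows it is still $G$-equivalent to the classical construction. An element $[c,y]$ of $\Cp\sX(G/H_0)$ has $y$ a section over a $G$-set $T_0$ over $G/H_0$. A simplex of the enlarged bar construction carries a chain of pullbacks $T_q\to\cdots\to T_0$ along which $y$ can be pulled back, so $\lambda$ keeps $c$ and inserts $\underline{\psi}^*y$ into the simplex. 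With only orbits in the indexing category there is no evident place for the coordinates of $y$, which live over the various orbits of $T_0$. This is why the Costenoble--Waner monadic variant of the orbit-indexed construction relies on the combinatorial structure of $\sC$. Alternatively, one can avoid $\lambda$ by taking $\ourEonAlg=\mathbb{L}\circ\Gamma$, with $\Gamma$ cofibrant approximation in $\Cp$-algebras, but that requires the model structures, which you do not invoke.
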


As explained in \cite{KMZ1}, \autoref{machine} implies that $\bE$ induces an equivalence with inverse $\OM^{\infty}$ from the homotopy category of grouplike $\bC$-algebras to the homotopy category of connective spectra; \autoref{thm:pre-machine} implies the following analog. The question of how to extend equivalences such as this to equivalences of $\infty$-categories is discussed in \cite{KMZ1}.

\begin{thm}\label{equiv} The functor $\mathbb{E}^{pre}$ induces an equivalence with inverse $\bR\OM^{\infty}$ between the homotopy category of grouplike $\Cp$-algebras and the homotopy category of connective $G$-spectra.
\end{thm}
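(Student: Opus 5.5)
We deduce \autoref{equiv} from \autoref{thm:pre-machine} combined with the $G$-space equivalence recorded after \autoref{machine}: $\bE$ and $\OM^{\infty}$ induce inverse equivalences between the homotopy category of grouplike $\bC$-algebras and the homotopy category of connective $G$-spectra. The strategy is to show that the functors $\sE_{\bC}$ and $\bR$ induce inverse equivalences between the homotopy categories of grouplike $\Cp$-algebras and grouplike $\bC$-algebras, and then compose. Throughout, ``homotopy category'' means localization at levelwise weak equivalences of presheaves, respectively at weak equivalences of $G$-spaces, which are exactly the maps inducing equivalences on all fixed points. A map $f$ of $\bC$-algebras is therefore a weak equivalence if and only if $\bR f$ is a levelwise weak equivalence.

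First we check that the functors descend and preserve the grouplike condition. $\bR$ carries weak equivalences to levelwise weak equivalences by definition. It also carries grouplike $\bC$-algebras to grouplike $\Cp$-algebras, since $\pi_0(\bR X)(G/H)=\pi_0(X^H)$. For $\sE_{\bC}$, \autoref{thm:pre-machine}(i) gives a natural levelwise weak equivalence between $\bR\sE_{\bC}\sX$ and $\sX$, possibly a zigzag. Applied to a map $\sX\rtarr\sY$ and combined with two-out-of-three, this shows that $\sE_{\bC}$ sends levelwise weak equivalences to weak equivalences of $G$-spaces. The same equivalence gives $\pi_0((\sE_{\bC}\sX)^H)\cong \pi_0\sX(G/H)$, so $\sE_{\bC}$ preserves grouplikeness.

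Next we identify the composites. The equivalence in \autoref{thm:pre-machine}(i) directly gives $\bR\sE_{\bC}\simeq \mathrm{Id}$ on the homotopy category of $\Cp$-algebras. The main obstacle is the other composite $\sE_{\bC}\bR X\simeq X$ for $X\in\bC[G\sT]$, which is not literally stated in \autoref{thm:pre-machine}. To get it I would use the construction of the Elmendorf functor: I expect it to come with a natural comparison map of $\bC$-algebras $\sE_{\bC}\bR X\rtarr X$, namely the counit of the Elmendorf adjunction, passed through the bar construction. Applying $\bR$ turns this into a map $\bR\sE_{\bC}\bR X\rtarr \bR X$ that is compatible with the equivalence of (i) for $\sX=\bR X$, so it is a levelwise equivalence. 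By the fixed-point criterion the comparison map is then a weak equivalence. If such a counit is not available on the nose, a zigzag through $\overline X=B(\bC,\bC,X)$ should serve instead.

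Finally we compose. Grouplike $\Cp$-algebras correspond to grouplike $\bC$-algebras, which correspond to connective $G$-spectra, so $\bE^{pre}=\bE\circ\sE_{\bC}$ is an equivalence on homotopy categories with inverse $\bR\OM^{\infty}$. We can also see the unit directly. For grouplike $\sX$, \autoref{thm:pre-machine}(ii) says that $\zeta^{pre}$ is a levelwise weak equivalence. Composed with (i), this gives a natural isomorphism $\sX\cong \bR\OM^{\infty}\bE^{pre}\sX$ in the homotopy category.
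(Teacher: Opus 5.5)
Your proposal is correct and follows the paper's route. The paper simply asserts that \autoref{thm:pre-machine}, together with the $G$-space equivalence from \cite{KMZ1}, implies the result; you make that deduction explicit and correctly isolate the extra input $\sE_{\bC}\bR X\simeq X$, which the paper leaves implicit. The step you hedged does go through as you expect: the evaluation map $\epz\colon \ourE\bR X\rtarr X$ satisfies $\bR\epz=\nu_{\bR X}$, so \autoref{ladiag}, naturality of $\nu$ and $\om$, and faithfulness of $\bR$ give $\bC\epz\com\la=\epz\com\ourE\om$, which yields a $\bC$-algebra map $B(\bC\ourE,\Cp,\bR X)\rtarr B(\bC,\bC,X)$ that becomes $\Cp\nu$ levelwise (up to $\om$) after applying $\bR$, hence is a weak equivalence that composes with $B(\bC,\bC,X)\simeq X$.
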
 

Before we say how the paper is organized, we indicate the main examples, which are treated  in Sections 6, 7, and 8.  Section 6 concerns the simplest relevant operad, namely the commutativity operad $\sN$. Each $\sN(n)$ is a point, and $G$ acts trivially.   It is a surprise that the monad $\bN^{pre}$ on orbital presheaves has the following remarkable property (\autoref{cor:MackeyN}). 

\begin{thm}\label{Mackey}  The category of Mackey functors is isomorphic to the category of discrete grouplike $\bN^{pre}$-algebras.
\end{thm}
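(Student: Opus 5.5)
We have to guess the definition of $\bN^{pre}$. The natural expectation is that $\Cp$ is built from the $G$-spaces $\sC(n)$ via fixed points under graph subgroups: for $\sC=\sN$, the structure maps of $\bN^{pre}\sX(G/H)$ are indexed by finite $H$-sets. I will therefore assume the following description of the monad, which is the equivariant analogue of ``commutative monoid'': for a finite $H$-set $A=\coprod_i H/K_i$, the $\Cp$-construction gives an operation $\prod_i \sX(G/K_i)\rtarr \sX(G/H)$. This operation is natural in $A$ with respect to isomorphisms of $H$-sets and compatible with restriction along maps of orbits. For discrete $\sX$ these operations are the transfers.

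The plan is to unwind what a discrete $\bN^{pre}$-algebra is. Write $M(G/H)=\sX(G/H)$ as a set. The restriction maps come from the presheaf structure. Taking $A=H/H\amalg H/H$, and using that $\sN(2)$ is a point fixed by everything, gives a commutative associative unital product on each $M(G/H)$, with unit given by $\sN(0)$. Taking $A=H/K$ gives a transfer $\mathrm{tr}_K^H\colon M(G/K)\rtarr M(G/H)$. Naturality in conjugation maps $G/K\rtarr G/gKg^{-1}$ gives the Weyl actions and conjugation invariance.

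The monad axioms, interpreted on $H$-sets, then yield the following. Transitivity of transfers and additivity of transfers follow from associativity of operadic composition: composing the operation for $A$ with operations for each orbit corresponds to the $H$-set $\coprod_i \mathrm{Ind}_{K_i}^H B_i$. The double coset (Mackey) formula comes from compatibility of the operation for $H/K$ with restriction to $L\subset H$, since $\mathrm{res}^H_L(H/K)=\coprod_{LgK} L/(L\cap gKg^{-1})$. Grouplike makes each $M(G/H)$ an abelian group and turns these structures into additive homomorphisms. This produces a Mackey functor.

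Conversely, a Mackey functor $M$ defines an action by summing transfers over orbits. Checking the unit and associativity diagrams reduces to the Mackey axioms by the same decompositions. The two constructions are inverse because an $\bN^{pre}$-structure is generated by the binary, unit and single-orbit transfer operations. Morphisms correspond, since maps of algebras commute exactly with restrictions, sums and transfers.

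The main obstacle is the precise identification of $\bN^{pre}(\sX)(G/H)$ and its monad structure in terms of finite $H$-sets and their decompositions. In particular one must verify that the structure maps encode exactly restriction, transfer and conjugation, with no further relations or extra operations. I would first compute $\bN^{pre}$ on a representable presheaf to see the Burnside-category-like morphisms appear, then use that any presheaf is a colimit of representables.
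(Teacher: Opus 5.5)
Your proposal is correct and follows essentially the paper's route: the paper also identifies $\bN^{pre}\sX(G/H)\iso\prod_{(H_i)\subset H}\bN\big(\sX(G/H_i)/W_HH_i\big)$ (Example \ref{ex:Npre}), extracts the monoid structures, the Weyl-invariant transfers $t_K^H$ and the conjugations, derives additivity and transitivity of transfers from associativity and the double coset formula from compatibility with pullback along orbit maps (Proposition \ref{prop:alg-N}), and conversely builds the action by summing transfers over orbits. The one adjustment concerns your plan for pinning down $\bN^{pre}$. Since it turns wedges into products, it does not commute with colimits and is not recovered from its values on representables alone; instead you should, as the paper does, directly unwind the fiberwise coequalizer, in which each $\sCpre_{G/H}(T)$ is a point when $\sC=\sN$.
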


Discrete means taking values in sets regarded as discrete spaces. Although $\sN$ is not an $E_{\infty}$ operad, we can map to it from any $E_{\infty}$ operad.  Then \autoref{thm:pre-machine} gives a functorial construction of Eilenberg-Mac\,Lane $G$-spectra (\autoref{HM1}).

\begin{thm} The functor $\bE^{pre}$ restricts on Mackey functors $M$ to give a functorial construction of Eilenberg-Mac\,Lane $G$-spectra $HM$.
\end{thm}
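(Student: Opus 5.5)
The plan is to feed a Mackey functor $M$ into the presheaf machine through \autoref{Mackey}, and then use \autoref{thm:pre-machine} to compute the homotopy groups of the output.

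\textbf{Step 1: make $M$ an algebra over an $E_\infty$ presheaf monad.}
By \autoref{Mackey}, $M$ is a discrete grouplike $\bN^{pre}$-algebra. Choose an $E_\infty$ $G$-operad $\sC$ acting naturally on equivariant infinite loop spaces, for instance the Steiner operad. The unique map of operads $\sC\rtarr\sN$ induces a map of monads $\Cp\rtarr\bN^{pre}$. I expect this from the functoriality of the construction $\sC\mapsto\Cp$, which is built from the operad levels and their fixed points. Pulling back along this map makes $M$ a grouplike $\Cp$-algebra, and the assignment is functorial in maps of Mackey functors. Set $HM=\bE^{pre}M$.

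\textbf{Step 2: compute the zeroth space.}
By \autoref{thm:pre-machine}(i), $\bR\sE_{\bC}M$ is levelwise weakly equivalent to $M$. By part (ii), since $M$ is grouplike, $\zeta^{pre}$ is a levelwise weak equivalence. Combining these, $(\OM^\infty HM)^H\simeq M(G/H)$ is a homotopy discrete space for every $H$. Hence:
\begin{itemize}
\item $\pi_n^H(HM)=0$ for $n>0$;
\item $\pi_n^H(HM)=0$ for $n<0$, since $HM$ is connective as an output of the machine $\bE$, being built from a $\bC$-algebra via \autoref{machine}.
\end{itemize}
So $HM$ is an Eilenberg--Mac\,Lane $G$-spectrum.

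\textbf{Step 3: identify $\underline{\pi}_0(HM)$ with $M$ as Mackey functors.}
Step 2 only gives $\pi_0^H(HM)\cong M(G/H)$ levelwise, with restrictions and conjugations matching. The remaining task, which I expect to be the main obstacle, is to show that the transfers also agree.

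The transfers of $\underline{\pi}_0$ of a $G$-spectrum are encoded on the infinite loop space by the action of the $E_\infty$ operad. Concretely, the transfer $\pi_0^K\rtarr\pi_0^H$ for $K\subset H$ comes from the components of $\sC(n)^{\Gamma}$ for the graph subgroup $\Gamma$ determined by the $H$-set $H/K$. In the proof of \autoref{Mackey}, the transfers of $M$ are read off from the $\bN^{pre}$-structure at exactly these $H$-sets.

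I would first verify that the composite $\sC\rtarr\sN$ sends the relevant component of $\sC(n)^\Gamma$ to the point that encodes the transfer in $\bN^{pre}$. Then I would check that the maps $\zeta$ and $\sE_\bC$ are maps, or zigzags of maps, of $\Cp$-algebras after applying $\bR$; the remark after \autoref{machine} supplies this for $\zeta$. Together these show the transfers are preserved, so $\underline{\pi}_0(HM)\cong M$ naturally in $M$.

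Naturality throughout follows because every step is functorial. Therefore $M\mapsto\bE^{pre}M$ is a functorial construction of $HM$.
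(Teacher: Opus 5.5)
Your proposal is correct, and Steps 1 and 2 are the paper's argument: pull $M$ back along $\Cp\rightarrow\mathbb{N}^{\mathrm{pre}}$, apply \autoref{thm:pre-machine}, and read off $\underline{\pi}_*(\mathbb{E}^{\mathrm{pre}}M)$ from the facts that $M$ is discrete and grouplike, with connectivity handling negative degrees. Your Step 3 matches the transfers using the operadic action on $\Gamma_\alpha$-fixed points and zigzags of $\Cp$-algebra maps; this is a sound refinement that the paper's one-line proof leaves implicit, and it rests on the standard fact that the $E_\infty$ action on $\Omega^\infty$ realizes the stable transfers.
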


Just as Mackey functors are the equivariant analog of abelian groups, deleting the discreteness condition in \autoref{Mackey} gives the equivariant analog of topological abelian groups, which deserve to be called topological Mackey functors.  A construction of their associated $G$-spectra works in exactly the same way (\autoref{HMTop}).

In \autoref{units}, we construct the unit $G$-spectrum of an $E_{\infty}$ ring $G$-spectrum $R$.  Nonequivariantly, these were first defined in \cite[Section IV.3]{MQR}, where they were used to understand obstructions in orientation theory.  They play an important role in much recent work, for example in \cite{ABGHR1, ABGHR2}.   We apply \autoref{thm:pre-machine} to give the equivariant generalization in \autoref{unit}.

\begin{thm}  For an $E_{\infty}$ $G$-spectrum $R$, there is a functorially constructed general linear  $G$-spectrum $gl_1(R)$ such that $\bR\OM^{\infty}  gl_1(R)$ is equivalent to $\ul{GL}_1(R)$.  That is, $(\OM^{\infty}gl_{1}(R))^H$ is equivalent to $GL_1(R^H)$, compatibly as $H$ varies.
\end{thm}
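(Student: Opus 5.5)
The plan is to build an orbital presheaf $\ul{GL}_1(R)$ that is an algebra over $\Cp$ for a suitable $E_\infty$ $G$-operad $\sC$ (the Steiner operad, or its product with the linear isometries operad), then set $gl_1(R) = \bE^{pre}\ul{GL}_1(R)$ and read off the conclusion from \autoref{thm:pre-machine}. For the input, note first that the infinite loop space $\OM^{\infty}R = R_0$ of an $E_\infty$ ring $G$-spectrum carries a multiplicative action of an $E_\infty$ $G$-operad. In the Lewis--May setting this is an action of the linear isometries operad $\sL$ with $\sL(j)=\sI(U^j,U)$, as in \cite[VII]{LMS} and \cite{MQR}. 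Replacing $\sL$ by $\sC=\sK\times\sL$, with $\sK$ Steiner and acting through the projection, we get $R_0\in\bC[G\sT]$ under multiplication. Then $\bR$ (using \autoref{Acom}) gives a $\Cp$-algebra $H\mapsto R_0^H$.

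Next I would define $\ul{GL}_1(R)(G/H)\subset R_0^H$ to be the union of those components of $(R_0^H)$ that are units in the ring $\pi_0(R_0^H)=\pi_0^H(R)$. I have to check three things.
\begin{enumerate}[(a)]
\item This is a sub-orbital presheaf. Restriction along $G/K\rtarr G/H$ is induced by $R_0^H\subset R_0^K$, a ring map on $\pi_0$, and maps induced by conjugation are ring maps too, so units go to units.
\item It is closed under the $\Cp$-structure maps. The $\Cp$-action on $\bR R_0$ is built from the $G$-space action maps $\sC(j)\times R_0^j\rtarr R_0$, restricted to fixed points of graph subgroups. Here I expect the main obstacle: the monad $\Cp$ incorporates norm-type operations, whereby a point of $\sC(j)^{\Lambda}$ for a graph subgroup $\Lambda$ of $H\times\SI_j$ sends a $K$-fixed point of $R_0$ to an $H$-fixed point, with $H/K$ a $\Lambda$-set. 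I must verify that such multiplicative norms $N_K^H$ carry units of $\pi_0^K R$ to units of $\pi_0^H R$. This follows because $N_K^H$ is multiplicative and sends $1$ to $1$, so $N(u)N(u^{-1})=1$. Concretely, I would reduce the general operation to a composite of products, restrictions and norms, using the explicit description of $\Cp$ from the earlier sections. Alternatively, I would argue componentwise: each structure map lands in a component determined by $\pi_0$ data, and that data is given by the Tambara-type operations on $\underline{\pi}_0$.
\item It is grouplike. Each $\pi_0\ul{GL}_1(R)(G/H)$ is the group of units by construction.
\end{enumerate}

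Finally, with $\sX=\ul{GL}_1(R)\in\Cprea$ grouplike, \autoref{thm:pre-machine}(ii) shows that $\zeta^{pre}$ is a levelwise weak equivalence $\bR\sE_{\bC}\sX\rtarr\bR\OM^{\infty}\bE^{pre}\sX$. Combined with part (i), this gives a zigzag of levelwise equivalences $\ul{GL}_1(R)\simeq\bR\OM^{\infty}gl_1(R)$ that is natural in the presheaf variable. That naturality is exactly the statement that $(\OM^\infty gl_1(R))^H\simeq GL_1(R^H)$ compatibly as $H$ varies. Here $R^H$ denotes the genuine fixed point spectrum, whose zeroth space is $R_0^H$.

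Functoriality in $R$ holds because each step is functorial in maps of $E_\infty$ ring $G$-spectra. Such maps induce $\bC$-maps of $R_0$ and preserve unit components.
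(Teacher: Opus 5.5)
Your proposal is correct and is essentially the paper's proof. The paper likewise takes the unit components $\ul{GL}_1(R)\subset\bR(R_0)$, shows this is a grouplike $\Cp$-algebra, and sets $gl_1(R)=\bE^{pre}(\ul{GL}_1(R))$ via \autoref{thm:pre-machine}. The differences are only in packaging: the paper gets closure under the $\Cp$-action by viewing \autoref{eq:defn-GL1} as a pullback of $\Cp$-algebras, with $\ul{\pi}_0(R)$ and $\ul{\pi}_0(R)^{\times}$ made into $\bN^{pre}$-algebras via \autoref{cor:MackeyN}, which is exactly your observation that the induced operations on $\pi_0$ are products of multiplicative norms and so preserve units; and your use of $\mathscr{K}\times\mathscr{L}$ makes explicit the hypothesis of \autoref{thm:pre-machine}, left implicit in the paper, that the operad also act additively on infinite loop spaces.
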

By the counterexample in \autoref{rem:not_equivalence},  this cannot be obtained directly from \autoref{machine}.
\begin{rem} This result with a version of our direct operadic proof has long been known, although never published.   Rekha Santhanam \cite{Sant} published a proof that translated our operadic input  to 
$\Gamma$-spaces and proceeded from there.
\end{rem}

In \autoref{PICBR}, we review the construction of equivariant $K$-theory spectra and construct the equivariant Picard and Brauer $G$-spectra of an $E_{\infty}$ ring $G$-spectrum $R$.  This can be viewed as a step in the spectrification of classical algebra.   Here the constructions are complete but their understanding is not.  This section is focused on the construction of $G$-spectra from permutative categories.  Just as there are classical (alias naive) and genuine $G$-spectra, there are also classical (alias naive) and genuine permutative $G$-categories. There is a functor that sends a classical permutative $G$-category $\sA$ to a genuine permutative $G$-category $\sA_G$.  (We generally omit the word genuine both on the category and the spectrum level.)    Our machine sends a classical permutative $G$-category 
$\sA$ to a classical $G$-spectrum $\bK \sA$ and sends a (genuine) permutative $G$-category $\sB$ to a (genuine) $G$-spectrum $\bK_G\sB$.  We abbreviate $\bK_G \sA_G$ to $\bK_G\sA$.  

We have a natural comparison $\io\colon \bK\sA \rtarr i^*\bK_G\sA$, where $i^*$ sends a genuine $G$-spectrum to its underlying classical $G$-spectrum.  It is an old observation \cite{GM3, Mer} that $\io$ is a weak equivalence in the context of the $K$-theory of Galois extensions, that being a direct consequence of Serre's version of Hilbert's Theorem 90.  In general, $B\io$ may or may not be a weak equivalence.\footnote{A recent preprint defines a categorical notion of weak equivalence that is so weak as to obliterate the question.  See Scholium \ref{Lenz}.} As implicitly observed by Merling \cite{Mer},  determining what $\io$ does can be viewed as a special case of Thomason's general homotopy limit problem \cite{Thom}.  We set up the context in \autoref{Perm} and discuss the homotopy limit problem in \autoref{lim}.  It determines whether or not the genuine $G$-spectra we construct have the fixed point spectra we would like them to have.  It remains open for the $K$-theory of $E_{\infty}$ ring $G$-spectra. See \autoref{Spec90}. 

We specialize to discuss Picard $G$-spectra in \autoref{PicR}, where the nonequivariant construction is  reviewed, and \autoref{GPicR}.  In parallel, we discuss Brauer $G$-spectra in \autoref{BrR}, where the nonequivariant construction is reviewed, and \autoref{GBrR}.  The parallel definitions of classical Picard and Brauer $G$-spectra of an $E_{\infty}$ ring spectrum are given in Propositions \ref{picprop} and \ref{brprop}, together with the verification that they have the nonequivariant  fixed point spectra that we want.  The parallel definitions of genuine Picard and Brauer $G$-spectra are given in Definitions \ref{PicG} and \ref{BrG}.  In neither case do we attempt to solve the homotopy limit problem. See Questions \ref{Pic90} and \ref{Br90}.  However, in Remarks \ref{Comp1} and \ref{Comp2}, we indicate how most of the problem reduces to the analogous open problem for  $K$-theory.  There is work to be done.  

This paper is a spin-off of the larger project \cite{KMZ1}.  There we give a general axiomatic framework for all of monadic iterated and infinite loop space theory, classical, equivariant, and multiplicative, under certain precisely specified categorical ``Assumptions".  Theorems \ref{machine}, \ref{thm:pre-machine}, and \ref{equiv} fit into that framework.   The specificity and importance of the orbital presheaf context suggested the value of this equivariant application.  Its key constructions are independent of the general framework and involve ideas of independent interest.  This paper is also a conceptual reworking and simplification of Costenoble--Waner \cite{CW}. 

As already said, for a G-space X, passage to fixed points gives its orbital presheaf 
$\bR X$.  We are interested in orbital presheaves $\sX$ that are {\em{not}} of that form.
We need the categorical preliminaries of Section \ref  {GSPACE} to work with such presheaves.  Our starting point is the equivalences $H\LA \iso G\LA_{G/H}$ of \autoref{prop:n-delta}, reexpressed with orderings in \autoref{Orderings}. Here $H\LA$ is the category of finite $H$-sets, defined independently of $G$, and $G\LA_{G/H}$ is the category of finite $G$-sets over $G/H$.  It is often significantly easier to work with $H\LA$, but to put things together as $H$ varies the latter is essential.   Use of orderings is also essential since equivariance is dictated homotopically by the relationship between $G$ and the symmetric groups.  

The theory  in \autoref{GSPACE} is an application of the classical  Grothendieck context in category theory.   To get to the main points more quickly, we relegate categorical background to the appendix, \autoref{CAT}, referring to it earlier where needed.  It seems remarkable to us that this categorical framework is so well adapted to our equivariant story.

In \autoref{CW1}, we focus on construction of an endofunctor $\Cpre$ on the category of orbital presheaves that specializes to $\Cp$, with its monad structure ignored.  There are several key choices to be made, and their interrelationships are not obvious.  Understanding them is central to the theory.  The story is outlined in \autoref{CW1},  an intuitively compelling wrong turn is described in \autoref{sec:covariant-input}, and the correction of that wrong turn is given in Sections \ref{sec:covariant-sec} and \ref{secdia}.  Using preliminaries from equivariant bundle theory given in \autoref{GBUND},  the functor $\Cpre$ is constructed in \autoref{sec:construction-cpre}.  The construction is summarized in the key diagram \autoref{eq:contra}. 

Operads and monads finally appear in \autoref{ContraIn}.  In \autoref{OperadCG}, which overlaps slightly with \cite{KMZ1}, we give a more detailed treatment of the new kind of equivariant operad introduced there,\footnote{These are very different from the $N_{\infty}$ equivariant operads of \cite{BP, GW, Rubin1, Rubin3}.} which we call a finitary $G$-operad.  It is not just an operad of $G$-spaces.  Rather,  it replaces the natural numbers by ordered finite $G$-sets.    However an operad $\sC$ of $G$-spaces prolongs to a $G$-operad $\NewC$, and the precise relationship between the two is at the core of the theory.  Using $\NewC$, we give the functor $\Cp$ a monad structure in \autoref{OGmonad} and discuss its algebras in \autoref{sec:cpre-algebra}.  The monad $\Cp$ that we construct in \autoref{OGmonad}\footnote{In fact, starting with a given operad, we construct from it  two quite different monads on orbital presheaves, as we explain in \autoref{sec:cpre-algebra}. The second one plays a conceptual role in \cite{KMZ1} but is irrelevant here.} 
 is equivalent to the one Costenoble--Waner constructed (see the discussion above \cite[Proposition 3.2]{CW}), but they construct it quite differently.

After discussing examples in Sections \ref{Examples}-\ref{PICBR}, we return to the general theory in \autoref{sec:presheaf-machine}, giving our Elmendorf construction in \autoref{sec:an-altern-elmend}, giving its key property in \autoref{sec:comp-natur-transf}, and going from there to the proof of \autoref{thm:pre-machine} in \autoref{sec:final}.  

To give motivation, recall that the functor $\bR$ has a left adjoint $\bL$ that sends an orbital presheaf $\sX$ to the $G$-space $\sX(G/e)$.   However, $\bL$ does not preserve weak equivalences.  Therefore one must replace $\bL$ by some functor $\generalE\colon \OGop \rtarr G\sT$ that comes with a natural weak equivalence $\bR\com \generalE \rtarr \Id$.  There are two very different known choices for $\generalE$.  One is given by the Elmendorf construction, here denoted $\originalE$ \cite{Elm}.  The other is given by observing that  $G\sO^{op}[\sT]$ and $G\sT$ have model structures such that the adjunction $(\bL,\bR)$ restricts to an adjoint {\em isomorphism} between the respective subcategories of cofibrant objects \cite{Pia, MM, EHCT}. Thus one can take $\generalE = \bL\com \GA$, where $\GA$ is cofibrant approximation in $G\sO^{op}[\sT]$.  

To construct the machine, one must obtain an analogous functor $\ourEonAlg$, as displayed in \autoref{ElmC}.  The hardest work in \cite{CW} is the construction of such a monadic variant of 
$\originalE$.  The construction there depends on the combinatorial structure of the operad $\sC$.  We instead construct a new variant, denoted $\ourE$, of the original Elmendorf construction. It is defined independently of $\sC$, but by standard use of a monadic two-sided bar construction, it extends to the required functor $\ourEonAlg$.  We expect these variants of the Elmendorf construction to have other applications.

The framework of \cite{KMZ1} more naturally sees the model theoretic alternative to the Elmendorf construction since the relevant adjunctions are central to the general framework.  The model theory works the same way for $\Cpa$ and $\bC[G\sT]$ as it does for $\OGop$ and $G\sT$, giving isomorphisms between the subcategories of cofibrant objects in each pair. Thus, alternatively, we can take $\sE_{\sC}$ to be $\bL\com \GA$, where $\GA$ is cofibrant approximation in $\Cpa$.
The recognition principle can be obtained either way, giving two proofs of the same theorem.  

\begin{notn}\label{notation}  For a monad $\mathbb{C}$ in a category $\sV$, we write $\mathbb{C}[\sV]$ for the category of $\mathbb{C}$-algebras in $\sV$.  For a small category $\sA$, we write $\sA[\sV]$ for the category of functors $\sA \rtarr  \sV$.
Notice that, for a second small category $\sB$, we have evident identifications
\begin{equation}\label{ABsee} \sA\big[\sB[\sV]\big] \iso (\sA\times \sB)[\sV]\iso \sB\big[\sA[\sV]\big].
\end{equation}
\end{notn}

\begin{rem}  The paper \cite{BO} of Bohmann and Osorno deals with more sophisticated categorical presheaf input with output in the Guillou-May model \cite{GM2} for $G$-spectra as presheaves of spectra. Like ours, it also gives Eilenberg-MacLane $G$-spectra.
Their work has been used to construct and study equivariant Waldhausen algebraic $K$-theory by Malkievich, Merling, and others \cite{MalMer1, MalMer2, CCM}. 
\end{rem}

\section{Orbital presheaves in the Grothendieck context}\label{GSPACE}

We establish context in this preliminary section.  
\subsection{The categories $\LRH$ and $\LRG_{G/H}$}\label{sec:categ-lambd-lambd}

We begin with some pedantically explicit elementary definitions.

\begin{defn}\label{LambdaG} For any finite group $G$, define $\LG$ to be the category whose objects are the finite based $G$-sets;  its morphisms are the injective based $G$-maps.  Each object has a basepoint $\ast$ and $n$ non-basepoint elements for some $n\geq 0$.  We shall later restrict to an equivalent small subcategory.  We view $\LG$ as a subcategory of the category $G\sF$ of finite based $G$-sets and all based $G$-maps between them.  

These definitions are standard, but the basepoint is both a necessity later and a distraction here when discussing finite sets with group actions.  We put a tilde over definitions to indicate the unbased or reduced versions.
Thus $\LRG$ denotes the category of (unbased) finite $G$-sets and $G$-maps, and so forth.  Adding disjoint basepoints gives the left adjoint of an adjoint equivalence between $\LRG$ and $\LG$. 

Define the orbit category $\OG$ to be the category of (unbased) orbits $G/H$ and $G$-maps and recall that a $G$-map between orbits is a surjection.  Embed $\OG$ as a subcategory of $G\sF$ by adjoining disjoint basepoints to orbits. 
\end{defn}

When $K$ is a subgroup of $H$, there is a restriction functor 
$\LRH \rtarr  \LRK$.  To assemble the $\LRH$ for all $H\subset G$ into a contravariant functor 
$\OG\rtarr \mathrm{Cat}$, we need an alternative description of $\LRH$ that remembers that
$H$ is a subgroup of $G$.  With $\sF$, $\sI$, and $\sO$ there taken to be $\FRG$, $\LRG$ and $\OG$ here, this also places us in the categorical context of \autoref{phistar}. 

\begin{defn}\label{LambdaGH} Define  $\LRG_{G/H}$ to be the category whose objects are (unbased) finite $G$-sets $T$ over $G/H$. Write $\pi\colon T\rtarr G/H$ generically for the projections.   Since $G$ acts transitively on $G/H$,  $\pi$ is a surjection. The  morphisms of  $\LRG_{G/H}$ are the injective $G$-maps $\ps\colon S\rtarr T$ over $G/H$.  Define
$$\mathrm{fib}(S) = \pi^{-1}(eH).$$
The action of $G$ on $S$ restricts to an action of $H$ on $\mathrm{fib}(S)$. An injective $G$-map $\ps\colon S\rtarr T$ over $G/H$ restricts to an injective $H$-map $\mathrm{fib}(S)\rtarr \mathrm{fib}(T)$, thus giving a ``fiber functor" 
 $$\mathrm{fib}\colon \LRG_{G/H} \rtarr \LRH.$$
\end{defn}

\begin{defn} The induction functor 
$$\mathrm{ind}\colon \LRH \rtarr \LRG_{G/H}$$ 
is defined by
$$\mathrm{ind}(S) = G\times_H S$$
with left $G$-action induced by the left action of $G$ on itself. The trivial $H$-map 
$S \rtarr \ast$ induces $\pi\colon G\times_H S \rtarr G\times_H \ast \iso  G/H.$
Thus $\pi$ sends $S$ to $eH$. 
\end{defn}

The following adjoint equivalence of categories formalizes the intuition that finite $H$-sets are equivalent to finite $G$-sets over $G/H$. 

\begin{prop}\label{prop:n-delta} The pair $(\mathrm{ind},\mathrm{fib})$ specifies an adjoint equivalence between 
the categories $\LRH$ and $\LRG_{G/H}$.
 \end{prop}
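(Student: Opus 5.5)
We have to produce the unit $\eta\colon \mathrm{Id}\rtarr \mathrm{fib}\com\mathrm{ind}$ and counit $\varepsilon\colon \mathrm{ind}\com\mathrm{fib}\rtarr \mathrm{Id}$, check that both are natural isomorphisms, and check the triangle identities. (Alternatively, it suffices to show one of them is a natural isomorphism and that $\mathrm{fib}$ is fully faithful; but writing down both explicitly is just as quick.) The plan is entirely elementary: everything reduces to the fact that $G\times_H S$ decomposes over $G/H$ with the fiber over $gH$ being $\{g\}\times S$.

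For the unit, given a finite $H$-set $S$, define $\eta_S\colon S\rtarr \mathrm{fib}(G\times_H S)=\pi^{-1}(eH)$ by $s\mapsto [e,s]$. This is an $H$-map, since $h[e,s]=[h,s]=[e,hs]$. It is surjective because any $[g,s]$ with $gH=eH$ has $g=h\in H$, so $[h,s]=[e,hs]$. It is injective because $[e,s]=[e,s']$ forces $s'=hs$ with $h=e$. Naturality in injective $H$-maps $S\rtarr S'$ is immediate.

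For the counit, given $\pi\colon T\rtarr G/H$, define $\varepsilon_T\colon G\times_H\mathrm{fib}(T)\rtarr T$ by $[g,t]\mapsto gt$. This is well defined because $[gh,t]$ and $[g,ht]$ have the same image. It is a $G$-map over $G/H$, since $\pi(gt)=g\pi(t)=gH$. It is surjective: for $x\in T$ with $\pi(x)=gH$, the element $g^{-1}x$ lies in $\mathrm{fib}(T)$ and $x=g(g^{-1}x)$. It is injective: if $gt=g't'$ with $t,t'$ in the fiber, then applying $\pi$ gives $gH=g'H$, so $g'=gh$, hence $t=ht'$ and $[g,t]=[g',t']$. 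Naturality with respect to morphisms $\ps$ over $G/H$ holds because $\ps$ is a $G$-map restricting to $\mathrm{fib}(\ps)$. Finally, the triangle identities reduce to the tautologies $[g,s]\mapsto [g,[e,s]]\mapsto g[e,s]=[g,s]$ and $t\mapsto [e,t]\mapsto t$.

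The only point requiring care, and the main (mild) obstacle, is the morphisms. We must check that both functors actually land in the stated categories, that is, that they preserve injectivity. For $\mathrm{fib}$ this is clear, since it is a restriction. For $\mathrm{ind}$, if $f\colon S\rtarr S'$ is injective and $[g,f(s)]=[g',f(s')]$, then $g'=gh$ and $f(s)=hf(s')=f(hs')$, so $s=hs'$ and the classes agree. Since $\eta$ and $\varepsilon$ are isomorphisms componentwise and natural, and the triangle identities hold, $(\mathrm{ind},\mathrm{fib})$ is an adjoint equivalence.
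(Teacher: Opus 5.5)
Your proof is correct and takes essentially the same approach as the paper: the same unit $s\mapsto[e,s]$ identifying $S$ with the fiber over $eH$, and the same counit $[g,t]\mapsto gt$ induced by the $G$-action on $T$. The only differences are minor: you prove bijectivity of the counit by a direct element computation, whereas the paper decomposes $T$ into orbits, and you also check the triangle identities and that $\mathrm{ind}$ preserves injections, which the paper leaves implicit.
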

\begin{proof}
The unit $\et\colon S\rtarr \mathrm{fib}(G\times_H S)$ of the adjunction is the evident identification of $S$ with the fiber of 
$\pi\colon G\times_H S \rtarr G/H$.  The counit $\epz\colon G\times_H \mathrm{fib}(T) \rtarr T$ is the $G$-bijection of $G$-sets over $G/H$ induced by the action of $G$ on $T$. It can be seen to be a bijection by writing $T$ as a disjoint union of orbits and checking that $\epz$ maps orbits to orbits. 
\end{proof}

\begin{rem}\label{sense}  Consider a $G$-set  $T$ over $G/H$
with fiber $H$-set $S$.  When $H = G$, $S=T$. In general, if $S$ is a union of $n$ orbits $H/K_i$, where $K_i\subset H$, then 
$G/H = (G/K_i)/H$ for each $i$  and $T$ is the union of the $n$ orbits $G/K_i$.
\end{rem}

The following closely related induction functor will be needed later.

\begin{defn} \label{defn:Induce-from-H}Denote by $\bI_H: \LRG_{G/H} \rtarr \LRG$  the forgetful functor that sends $\pi: T \rtarr G/H$ to $T$. Under the equivalence $\LRH \simeq \LRG_{G/H}$, $\bI_H$ sends an $H$-set $S$ to the $G$-set $T=G\times_H S$.  Observe that $\bI_G$ is an identification since $\LRG_{G/G} = \LRG$. \end{defn}

For a $G$-map $\ph\colon G/K\rtarr G/H$, specialization of the categorical \autoref{phistar} defines 
$\ph^*\colon \LRG_{G/H} \rtarr \LRG_{G/K}$. It is given by pullback squares
\begin{equation}\label{reverseO}
\xymatrix{
\ph^*(T) \ar[r] \ar[d]_{\pi} & T \ar[d]^{\pi} \\ 
G/K \ar[r]_{\ph} &  G/H. \\}
\end{equation}

\begin{defn}\label{gammag} Recall that if $\ph(eK) = gH$, then the equality $\ph(eK) = \ph(kK)$ for $k\in K$ implies that 
$g^{-1}Kg\subset H$. Define $\ga_g\colon K\rtarr H$ to be the composite of conjugation, $c_{g^{-1}}(k) = g^{-1} k g$, and that inclusion. Restriction along $\ga_g$ defines a functor $\ga_g^*\colon \LRH \rtarr \LRK$. Note that $\ga_g$ depends on the choice of $g$ in its coset. We shall return to that ambiguity in \autoref{Orderings2}.
\end{defn} 

We shall use the $\ga_g^*$ to relate $\ph^*$ to the isomorphism of \autoref{prop:n-delta}. 

\begin{lem}\label{phistar2} For $H$-sets $S$, there is a natural isomorphism of $G$-sets over $G/K$
$$ G\times_K \ga_g^* S \rtarr \ph^*(\mathrm{ind}(S)) $$
and therefore a natural isomorphism of $K$-sets 
$$ \ga_g^* S \rtarr \mathrm{fib}(\ph^*(\mathrm{ind}(S))) $$
between their fibers.
\end{lem}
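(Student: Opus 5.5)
I will write down an explicit map and check it directly. Throughout, $\ph(eK)=gH$, so $g^{-1}kg\in H$ for all $k\in K$.

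Since $\ph^*(\mathrm{ind}(S))$ is the pullback in \autoref{reverseO}, it is the $G$-set
$$\{(xK,[y,s]) \mid x,y\in G,\ s\in S,\ xgH = yH\} \subset G/K\times G\times_H S,$$
with diagonal $G$-action. I define
$$\al\colon G\times_K \ga_g^*S \rtarr \ph^*(\mathrm{ind}(S)), \qquad \al[x,s] = (xK,[xg,s]).$$
The pair lies in the pullback, since $\ph(xK)=xgH$ and $\pi[xg,s]=xgH$.

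The first step is to check that $\al$ is well defined. For $k\in K$, the point $[xk,s]$ equals $[x, k\cdot s]$ in $G\times_K\ga_g^*S$, where $k\cdot s = g^{-1}kg\, s$. Now
$$\al[xk,s] = (xK,[xkg,s]) = (xK,[xg\,(g^{-1}kg),s]) = (xK,[xg,\,g^{-1}kg\,s]) = \al[x,k\cdot s],$$
using that $g^{-1}kg\in H$. The map $\al$ is visibly $G$-equivariant, lies over $G/K$, and is natural in $H$-maps $S\rtarr S'$.

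The second step is to show $\al$ is a bijection. Since it is a map over $G/K$ and both sides are $G$-sets over the orbit $G/K$, it suffices to check bijectivity on the fibers over $eK$; the equivariance then transports this to all other fibers. The fiber of the source over $eK$ is $\ga_g^*S$, embedded as $s\mapsto[e,s]$. The fiber of the target is $\{[y,s] \mid yH=gH\}$, and each such element can be uniquely written as $[g,s']$ with $s'\in S$. On these fibers $\al$ is $s\mapsto[g,s]$, which is a bijection. This also gives the second statement: the induced map on fibers $\ga_g^*S\rtarr \mathrm{fib}(\ph^*(\mathrm{ind}(S)))$ is this $K$-isomorphism, with $K$-equivariance being exactly the well-definedness computation above.

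The main obstacle is the bookkeeping with conjugation. One has to ensure that the $K$-action on the fiber $\{[g,s]\}$ really is the restricted action through $\ga_g$: indeed $k[g,s] = [g\,g^{-1}kg,s] = [g,\ga_g(k)s]$. One must also note that the isomorphism depends on the choice of $g$ in its coset, consistent with \autoref{gammag}.
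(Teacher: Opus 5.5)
Your proof is correct and is essentially the paper's: your map $[x,s]\mapsto (xK,[xg,s])$ is exactly the map the paper obtains from the universal property of the pullback applied to $\pi$ and the $G$-extension $\overline{\iota}_g[x,s]=[xg,s]$ of the $K$-map $s\mapsto (g,s)$. The only difference is the bijectivity check. You verify bijectivity on the fiber over $eK$ and transport it by equivariance, whereas the paper deduces surjectivity from surjectivity of the two component maps plus a cardinality count; your fiberwise argument is the more airtight of the two, since surjective components do not by themselves force surjectivity onto a pullback.
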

\begin{proof} Define $\io_g\colon \ga_g^*S \rtarr G\times_H S = \mathrm{ind}(S)$ by $\io_g(s) = (g,s)$.  Then $\io_G$ is a $K$-map since, for $k\in K$,
$$ \io_g(\ga_g(k) s) = (g, g^{-1}kg s) = (gg^{-1}kg,s) = (kg,s) = k(g,s).$$
Then $\io_g$ extends to a $G$-map $\overline{\io}_g$ that makes the outer square in the following diagram commute. 
\begin{equation*} 
\xymatrix{
G\times_K \ga_g^*S  \ar@{-->}[rd] \ar[rrd]^{\overline{\io}_g} \ar[rdd]_{\pi}& & \\
& \ph^*(G\times_H S)\ar[d]_{\pi} \ar[r] & G\times_H S \ar[d]^{\pi}\\
& G/K \ar[r]_-{\ph} &  G/H \\}
\end{equation*}
The universal property of the pullback square gives the dotted arrow.  It is a bijection.  Indeed, it is a surjection since $\pi$ and $\overline{\io}_g$ are surjections, and it is an injection since, by inspection, the source and target have the same cardinality. Application of $\mathrm{fib}$ gives the isomorphism of fibers.  
\end{proof}

 \subsection{Redefinitions in terms of ordered finite $G$-sets}
 \label{sec:categ-lambd-lambd2}
Observe that the category $\SI$ of symmetric groups is the subcategory of automorphisms in $\tilde{\LA}$.  It is essential to our work to build permutations explicitly into our categorical data. This is done by replacing the 
$\LRH$ and $\LRG_{G/H}$ by equivalent small categories of ordered finite $H$-sets and ordered finite $G$-sets over $G/H$. We shall again be very pedantic about this, in part following \cite{MMO}. 

 \begin{conv}\label{finiteGset}
Let $\mathbf{n}$ denote the (unbased) set $\{1, \dots, n\}$. For a finite group $G$ and a homomorphism $\al\colon G\rtarr \SI_n$, define $\bn^{\al}$ to be the $G$-set specified by letting $G$ act on $\mathbf{n}$ by $g\cdot i = \al(g)(i)$  for $1\leq i\leq n$.  Conversely, a $G$-action on $\mathbf{n}$ determines a $G$-homomorphism $\al$ by the same formula. A finite $G$-set $S$ with $n$ elements $\{s_1, \cdots, s_n\}$ can be identified with $\bn^{\al}$, where $\al(g)(i) = j$ if $gs_i = s_j$. We understand finite $G$-sets to be of this form henceforward,  and we shall often use the alternative notation $S_{\al}$ for  $S = \bn^{\al}$.  In later sections, we will add the disjoint $G$-fixed basepoint $0$  without change of notation.
\end{conv}

Explicitly, $\bn^{\al}$ is the subgroup of $\SI_n$ consisting of those permutations $\ga$ such that
$$ \ga \big((g,\al(g)(i)\big) = (g,\al(g))\ga(i) \ \ \text{for} \ \ 1\leq i \leq n.$$
Note in particular that if $\bn^{\alpha} \cong G/H$, then $\SI_{\bn^{\al}} \cong  W_G(H)= N_G(H)/H$.

\begin{lem}\label{lamblamb} If $S$ is specified by $\al\colon G\rtarr \SI_m$ and $S'$ is specified by $\al'\colon G\rtarr \SI_n$, then a $G$-map $\ph\colon S \rtarr S'$ is specified by a function $\ps\colon {\mathbf{m}} \rtarr \mathbf{n}$ such that 
\begin{equation}\label{albe}
\ps(\al(g)(i)) = \be(g)(\ps(i)) \ \ \text{for} \ \ 1\leq i\leq m.
 \end{equation}
\end{lem}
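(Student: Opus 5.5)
The plan is to unwind \autoref{finiteGset}. Under that convention $S=\mathbf{m}^{\al}$ and $S'=\mathbf{n}^{\al'}$, and the $\be$ in \autoref{albe} is to be read as $\al'$. The underlying set of $S$ is $\mathbf{m}$ with $g\cdot i=\al(g)(i)$, and similarly for $S'$. A $G$-map $\ph\colon S\rtarr S'$ is first of all a function between the underlying sets, so it is a function $\ps\colon\mathbf{m}\rtarr\mathbf{n}$; the only content of the lemma is to check that equivariance of $\ph$ is exactly condition \autoref{albe}.

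I would argue both directions directly. If $\ph$ is a $G$-map, then $\ph(g\cdot i)=g\cdot\ph(i)$ for all $g$ and $i$. Substituting the two actions gives $\ps(\al(g)(i))=\al'(g)(\ps(i))$, which is \autoref{albe}. Conversely, a function $\ps$ satisfying \autoref{albe} commutes with the actions by the same substitution read backwards, so it is a $G$-map.

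If $S$ and $S'$ are not literally of the form $\mathbf{m}^{\al}$ but come with orderings $\{s_i\}$ and $\{s'_j\}$, I would first transport along the identifications of \autoref{finiteGset}. These are $G$-bijections, because $gs_i=s_{\al(g)(i)}$ by the definition of $\al$. Composing with them reduces the general case to the one above, with $\ps$ defined by $\ph(s_i)=s'_{\ps(i)}$.

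There is no real obstacle. The only care needed is bookkeeping: apply $\al$ on the source and $\al'$ (the paper's $\be$) on the target. I would also remark that $\ph$ is injective exactly when $\ps$ is, so the morphisms of $\LRG$ correspond to the injective $\ps$ satisfying \autoref{albe}.
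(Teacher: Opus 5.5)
Your proposal is correct and is essentially the paper's argument: define $\psi$ by $\varphi(s_i)=t_{\psi(i)}$, note that $g s_i = s_{\alpha(g)(i)}$, so equivariance of $\varphi$ is literally the displayed condition, and read the same identity backwards for the converse. You are also right that the paper's $\beta$ in that condition should be read as $\alpha'$.
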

\begin{proof}
Given $\ph$, define $\ps$ by $\ps(i) = j$ if $\ph(s_i) = t_j$. Then \autoref{albe} expresses that $\ph$ is a $G$-map. Conversely, given $\ps$, define $\ph(s_i) = t_j$ if $\ps(i) = j$. 
\end{proof}

\begin{defn}\label{orders} Let $G$ have order $t_g$.  Fix a total ordering of $G$
 with $e=1$.  Let $\al_G\colon G\rtarr \SI_{t_g}$ be the homomorphism given by the
 action of $G$ on itself, and define $\al_H$ similarly using the ordering of $H$ as a
 subset of $G$. Let $t_{g/h} = |G/H|$, so that $t_{g/e} = t_g$; the letter $t$ is a reminder that $G$ acts transitively on each $G/H$. Order the $G$-set $G/H$ by choosing the minimum (in the ordering of $G$) of the elements of each coset $gH$ and then ordering these minima (again in the ordering of $G$); note that the coset $eH$ is numbered as $1$. Let $\al_{G/H}\colon G\rtarr \SI_{t_{g/h}}$ be the homomorphism specifying the action of $G$ on $G/H$.  
\end{defn}

\begin{defn}\label{redefine1} Redefine $\LRH$ to be the category of finite ordered $H$-sets $S = \bn^{\al}$, often denoted $S_{\al}$.   The morphisms are the  injective $H$-maps, namely those $\ps\colon \bm\rtarr \bn$ that satisfy \autoref{albe}.  It is clear that the new $\LRH$ is equivalent to the original one.  Similarly, redefine $\LRG_{G/H}$ as follows. Let $T$ be a finite $G$-set over 
$G/H$.  Then $T$ must have $t_{g/h} n$ elements for some $n\geq 1$, hence is specified by a homomorphism  $\be\colon G \rtarr \SI_{t_{g/h}n}$. Identifying $G/H$ with $\{1, \cdots, t_{g/h}\}$, we require $\pi\colon T\rtarr \{1, \cdots, t_{g/h}\} $ to satisfy
\begin{equation}\label{wreath1}
\pi(\be(g)(i)) = \al_{G/H}(g)(\pi(i)) \ \ \text{for} \ \ 1\leq i\leq t_{g/h} n
\end{equation}

We identify $\mathbf{t_{g/h} n}$ with $\mathbf{t_{g/h}} \times \mathbf{n}$ ordered lexicographically, thinking of the first copy of 
$\{\mathbf{1, \cdots, n}\}$ as corresponding to $\pi^{-1}(eH)$ and the remaining copies as corresponding to the $\pi^{-1}(g_iH)$, where the non-identity coset representatives $g_i$ are ordered by the chosen ordering of $G$. This fixes an injection 
$$\SI_{t_{g/h}} \times \SI_n \rtarr \SI_{t_{g/h}n}$$ 
Then $\be$ restricts to $\al\colon H\rtarr \SI_n$, where
$\SI_n$ acts on the first copy of $\{\mathbf{1, \cdots, n}\}$, and $\al$ determines $\be$ via the resulting identification of $T$ with the lexicographically ordered $G$-set $G\times_H \mathbf{n}^{\al}$ over $G/H$.  These ordered $T$ are the objects of our redefined category $\LRG_{G/H}$. The morphisms are the injective $G$-maps over $G/H$. It is clear that the new $\LRG_{G/H}$ is equivalent to the original one.  
\end{defn}

\begin{rem}\label{sense1} If we decompose  a  finite $G$-set $T$ over $G/H$ as a coproduct of orbits  
$S_i = G/K_i$, say $n$ of them, then $\pi$ ensures that each $K_i$ is subconjugate to $H$.  We can choose our decomposition so that $K_i\subset H$.  Then  $t_{g/k_i} = t_{g/h} n_i$ and $\sum_i n_i  = n$.  The ordering of $G$ fixes an ordering of the cosets in each  $G/K_i$ consistent with the ordering of the cosets in $G/H$, and the ordering of $T$ fixes an ordering of the orbits.  
\end{rem}

From here, the rest of \autoref{sec:categ-lambd-lambd} carries over directly with $\LRH$ and $\LRG_{G/H}$ interpreted in our new sense.  We slightly expand on this blanket assertion in the following remarks.

\begin{rem}\label{Orderings}  We have defined our ordered objects of $\LRH$ and $\LRG_{G/H}$ so that the ordering of $G$ and of a finite $H$-set $S$ determines the ordering of $\mathrm{ind}(S)$, and the ordering of a $G$-set $T$ over $G/H$ determines the ordering of  $\mathrm{fib}(T)$.  Similarly, for $\ph\colon G/K \rtarr G/H$, $\ph^*(T)$ in \autoref{reverse} inherits a lexicographical ordering from the ordering of $G/H$ and a given ordering of $T$.  That is, $(gK,t) < (g'K,t')$ if $g< g'$ or if $g= g'$ and $t< t'$; $(eK,1)$ is the first element. Here $\ph(gK) = \pi(t)$ and $\ph(g'K) = \pi(t')$.  
\end{rem}

\begin{rem}\label{Orderings2} For an ordered $H$-set $S$, we have an implied ordering of the targets of the displayed isomorphisms in \autoref{phistar2}, and that fixes orderings of their sources $G\times_K \ga_g^*S$ with $\ga_g^{*}$ as defined in \autoref{gammag}.  This works for any choice of $g$, but we can fix the choice by requiring $g$ to be minimal in its coset $gH$.  With this choice, $g = e$ when $\ph$ is induced by an inclusion $K\subset H$.  Note that if $S$ is defined by $\al\colon H\rtarr \SI_n$, then $\ga_g^*S$ is defined by the composite $\al\com c_{g^{-1}}\colon K \rtarr \SI_n$. 
\end{rem}

\begin{notns}\label{GrothGcats} Henceforward, we always understand the $\LRH$ and $\LRG_{G/H}$ in the ordered sense. Letting $H$ vary and using \autoref{phistar} with $\sF = \LRG$ and $\sO = G\sO$, we  obtain Grothendieck categories $\LRG_{\star}$ and $\LRG_{\star}^{vop}$ via Definitions \ref{LambdaStar} and \ref{LambdaStarv}. Adding disjoint basepoints to all orbits, all $H$-sets, and all $G$-sets over $G/H$, it is straightforward to reinterpret everything in a based sense, giving equivalent categories $\LH$ and 
$\LG_{G/H}$, the latter consisting of based $G$-sets over $(G/H)_+$ with $\pi^{-1}(\ast) = \ast$.  {\em {With this understanding, we use the notations $\LH$ and $\LG_{G/H}$, without the tilde, for the rest of the paper, even though when dealing with $H$-sets or $G$-sets explicitly, we understand them in the unbased sense.   We use the notation $\bn$ for both the finite set $\{1,\dots, n\}$ and the based finite set $\{0, 1, \cdots, n\}$.}}
\end{notns}

\subsection{Equivariant preliminaries}\label{sec:equiv}

It is convenient to put together in one place some conventions on $G$-sets and $H$-sets that we shall use.   The following notations are variants of some in 
\cite{MMO} and expansions of those in \cite{KMZ1}.   We let $H$ be a subgroup of $G$, but we mainly focus on $H=G$.

\begin{notn}\label{graphhom} For a homomorphism $\al\colon H\rtarr \SI_n$, let 
$$\GA_{\al} = \{(h,\al(h))\}\subset G\times \SI_n$$
denote the graph subgroup determined by $\al$. Note that the projection $\pi\colon \GA_{\al} \rtarr H$ is an isomorphism.
\end{notn}

\begin{notn}\label{Yalpha} Let $Y$ be a $(G\times \PI)$-space for some group $\PI$, such as $\PI = \SI_n$, and let
$\al\colon H\rtarr \PI$ be a homomorphism. We define $Y^\al$ to be the $H$-space with underlying space $Y$ and with a new $H$-action $\cdot_\al$ given by
$$h\cdot_\al y= (h,\al(h))\cdot y.$$  
Thus $Y^{\al}$ is $Y$ with $H$-action twisted by $\al$. In other words, $Y^{\al}$ is $Y$ with its action by the subgroup $\GA_{\al}$ of $G\times \Pi$, pulled back along $\pi^{-1}\colon H\rtarr \GA_{\al}$. 
\end{notn}

Inspection gives the following result.  
\begin{lem}\label{standard}
Let $S=\bn^{\al}$, $\al\colon H\rtarr \SI_n$.  If $Y$ is a based $(G\times \SI_n)$-space, then
\begin{equation*}\label{twistfix}
  Y^{\Gamma_{\al}} \iso (Y^{\al})^H.
\end{equation*}
If $X$ is a based $G$-space and $F_H(S,X)$ is the space of based $H$-maps $S\rtarr X$, this specializes to 
\begin{equation*}\label{eq:first} 
(X^n)^{\GA_{\al}} \iso F_{H}(S, X).
\end{equation*}
In particular, taking $H=G$ and taking  $S = G/K = t_{g/k}^{\al_{G/K}}$, 
\begin{equation}\label{YesYes}
(X^{\al_{G/K}})^G \iso X^K.
\end{equation}
\end{lem}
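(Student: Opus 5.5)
We first unwind the definitions so that the two fixed point conditions can be compared directly. Here $Y$ is a based $(G\times \SI_n)$-space, and its restriction to $H\times \SI_n$ is what matters. Since $\pi\colon \GA_{\al}\rtarr H$ is an isomorphism, a point $y\in Y$ is fixed by $\GA_{\al}$ exactly when $(h,\al(h))\cdot y = y$ for all $h\in H$. By \autoref{Yalpha}, this is exactly the condition $h\cdot_{\al} y = y$ for all $h \in H$. So $Y^{\GA_{\al}}$ and $(Y^{\al})^H$ are literally the same subspace of $Y$, with the same basepoint, and the first isomorphism is the identity.

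For the specialization, take $Y = X^n$. Here $G$ acts diagonally, $g(x_1,\dots,x_n) = (gx_1,\dots,gx_n)$, and $\SI_n$ permutes coordinates. The main step is to fix the convention for the $\SI_n$-action precisely, namely $\si(x_1,\dots,x_n) = (x_{\si^{-1}(1)},\dots,x_{\si^{-1}(n)})$. This is the one place where care is needed, since the wrong convention would give the action of $\al(h)^{-1}$ rather than $\al(h)$.

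With this convention, write $\ph(i) = x_i$, so that a point of $X^n$ is the same thing as a function $\ph\colon \bn\rtarr X$, and hence a based map $S_+ = \bn_+\rtarr X$. Computing the twisted action gives
$$\big(h\cdot_{\al}\ph\big)(\al(h)(i)) = h\,\ph(i).$$
The fixed point condition therefore reads $\ph(\al(h)(i)) = h\,\ph(i)$ for all $h\in H$ and all $i$. This is precisely the statement that $\ph$ is an $H$-map, since $h\cdot i = \al(h)(i)$ in $S=\bn^{\al}$ by \autoref{finiteGset}. The subspace topologies agree because $F_H(S,X)$ is topologized as a subspace of $X^n = F(\bn_+,X)$. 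This gives the isomorphism $(X^n)^{\GA_{\al}}\iso F_H(S,X)$.

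Finally, for \autoref{YesYes}, set $H=G$, $n=t_{g/k}$ and $\al=\al_{G/K}$, so that $S = G/K$ with the ordering of \autoref{orders}. The previous step identifies $(X^{\al_{G/K}})^G$, the fixed points of the twisted action on $X^{t_{g/k}}$, with $F_G(G/K_+,X)$. Evaluation at the coset $eK$, which is numbered $1$, identifies $F_G(G/K_+,X)$ with $X^K$. Its inverse sends $x\in X^K$ to the map $gK\mapsto gx$, which is well defined precisely because $x$ is $K$-fixed. In coordinates, the isomorphism is projection to the first factor of $X^{t_{g/k}}$.
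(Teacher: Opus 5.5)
Your proposal is correct and is exactly the direct unwinding of definitions that the paper leaves to ``inspection''. The first isomorphism is an equality of subspaces of $Y$. The second identifies $\GA_{\al}$-fixed tuples in $X^n$ with $H$-maps $\bn^{\al}\rtarr X$, once you use the left permutation action on $X^n$, a convention the paper leaves implicit and which you rightly make explicit. The third is evaluation at the coset $eK$, numbered $1$ in \autoref{orders}.
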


We will not use semi-direct products until \autoref{ContraIn} and wreath products will only appear implicitly, where their use allows a more explicit  description of $\bC^{pre}$ than we shall give; see \autoref{OGmonad}.

\begin{defn}\label{wreath}  Let  $G$ and $\PI$ be groups and let $\ph\colon G\rtarr \mathrm{Aut}(\PI)$ be a homomorphism of groups.  The semi-direct product
$\PI\rtimes_{\ph} G$ is the group given by the set $\PI\times G$ with unit $(1,1)$ and product
$$  (\si,h)(\ta,g) = (\si \ph(h)(\ta),hg). $$
For a homomorphism $\al\colon G\rtarr \SI_n$, define $\ph(\al)\colon  G\rtarr \PI^n$ by 
$$\ph(\al)(g)(\pi_1, \cdots, \pi_n) = (\pi_{\al(g)(1)}, \cdots, \pi_{\al(g)(n)})$$
Then define the wreath product $\PI\wr_{\al} G$ to be the semi-direct product $\PI^n\rtimes_{\ph(\al)} G$.
\end{defn}

\begin{notn}\label{LAG} Recall that $G\LA$ is the category of finite based $G$-sets and based $G$-injections.
Co-ambiguously,\footnote{Boardman's joke.} we let both $\LA_G$ and $\ul{G\LA}$ denote the category of finite based $G$-sets and all based injections, with $G$ acting by conjugation on the morphism sets. Note  that $G\LA = (\ul{G\LA})^G$.  We usually use the notation $\LA_G$ in other papers, such as 
\cite{KMZ1}.  As a rule of thumb, we prefer $\ul{G\LA}$ when thinking of the self-enrichment of the category $G\LA$ and $\LA_G$ when thinking of $\LA_G$ as a stand-alone $G$-category, where a $G$-category is a category enriched in $G$-sets.
\end{notn}

\begin{conv}\label{finiteGset2} Let $\SI_G$ be the $G$-subcategory of bijections in $\LA_G$. 
Just as $\SI_n = \SI(\bn,\bn)$, we define  $\SI_{\bn^{\al}}$ to be the group $\SI(\bn^{\al}, \bn^{\al})$ of automorphisms of $\bn^{\al}$ in $\SI_G$.   It is the group $\SI_n$ equipped with the action of $G$ given by conjugation by $\al$.  Explicitly, $\SI_{\bn^{\al}}$ is the subgroup of $\SI_n$ consisting of those permutations 
$\ga$ such that
$$ \ga \big((g,\al(g)(i)\big) = (g,\al(g))\ga(i) \ \ \text{for} \ \ 1\leq i \leq n.$$
Note in particular that if $\bn^{\alpha} \cong G/H$, then $\SI_{\bn^{\al}} \cong  W_G(H)= N_G(H)/H$.
\end{conv}

\begin{lem}\label{semi}
Define $\al_c\colon G\rtarr  Aut(\SI_n)$ to be conjugation by $\al$, 
$$ \al_c(g)(\si) = \al(g)\com \si \com \al(g)^{-1}. $$
Then $\al_c$ is a homomorphism of groups.  The action of $G$ on $\bn^{\al}$ extends to an action of the semi-direct product  
$\SI_n \rtimes_{\al_c} G$ via
$$  (g,\si)(i) = \si\big(\al(g)(i)\big).$$
\end{lem}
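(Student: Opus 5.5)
Two things need checking: that $\al_c$ is a homomorphism $G\rtarr \mathrm{Aut}(\SI_n)$, and that the displayed formula gives an action of $\SI_n\rtimes_{\al_c} G$ on $\mathbf{n}$ extending the given $G$-action. We read elements of the semi-direct product as pairs $(\si,g)\in \SI_n\times G$ with the product of \autoref{wreath},
$$(\si,h)(\ta,g) = (\si\,\al_c(h)(\ta),\,hg),$$
and we read the formula in the statement as $(\si,g)(i) = \si(\al(g)(i))$.

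\textbf{Step 1: $\al_c$ is a homomorphism.} First, each $\al_c(g)$ is an automorphism of $\SI_n$. It is conjugation by $\al(g)$, so it is a group homomorphism, and it has inverse $\al_c(g^{-1})$. Second, $\al_c$ respects products:
$$\al_c(hg)(\si) = \al(h)\al(g)\,\si\,\al(g)^{-1}\al(h)^{-1} = \al_c(h)\big(\al_c(g)(\si)\big).$$
This uses only that $\al$ is a homomorphism.

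\textbf{Step 2: the formula is an action.} The identity $(1,e)$ acts trivially, since $\al(e)=1$. For compatibility with products, compute both sides separately:
$$(\si,h)\big((\ta,g)(i)\big) = \si\al(h)\ta\al(g)(i),$$
$$\big((\si,h)(\ta,g)\big)(i) = \si\,\al(h)\ta\al(h)^{-1}\,\al(hg)(i) = \si\al(h)\ta\al(g)(i).$$
They agree, again using $\al(hg)=\al(h)\al(g)$. This cancellation is the only point needing care. It is exactly why the twist is conjugation by $\al$ and why the formula applies $\al(g)$ before $\si$.

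\textbf{Step 3: it extends the $G$-action.} Restricting along $g\mapsto (1,g)$ gives $g\cdot i = \al(g)(i)$, which is the action defining $\bn^{\al}$. So the new action extends the old one. In addition, $\SI_n\cong \SI_n\times\{e\}$ acts by its tautological action.

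There is no genuine obstacle here beyond fixing the conventions: the order of the factors in the semi-direct product, and the order of composition in the action formula. Both are settled by the choices made above.
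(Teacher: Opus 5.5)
Your proof is correct and matches the paper's argument: it checks that each $\al_c(g)$ is an automorphism and that $\al_c$ is multiplicative, then verifies the action axiom directly, using the product $(\si,g)(\ta,h) = (\si\,\al_c(g)(\ta), gh)$ on $\SI_n\times G$ and writing pairs as $(\si,g)$ as the paper's proof does. Your explicit cancellation $\al(h)^{-1}\al(hg)=\al(g)$ and your check that $(1,g)$ acts as $\al(g)$ fill in what the paper leaves as ``easily checked.''
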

\begin{proof} For the first statement, it is easily checked that
$$\al_c(g)(\si \ta) = \al_c(g)(\si)\al_c(g)(\ta)\ \ \text{and} \ \ \al_c(gh) = \al_c(g)\big(\al_c(h)\big).$$
Thus each $\al_c(g)$ is an automorphism of $\SI_n$  and $\al_c$ is a group homomorphism.  For the second statement, $\SI_n \rtimes_{\al_c} G$ is the set $\SI_n\times G$ with the product
$$  (\si,g)(\ta,h) = (\si\al_c(g)(\ta), gh).$$
It is easily checked that 
$$\big((\si,g)(\ta,h))(i)  = (\si,g)\big((\ta,h)(i)\big).  \qedhere $$
\end{proof} 

\section{Outline of the construction of  $\Cp$}\label{CW1}

\subsection{Inputs to the categorical tensor product $\sCprest\otimes_{G\LA_{\star}} \bS_{\star}$}\label{Cpre1} 

The monad in $G\sT$ associated to an operad $\sC$ of $G$-spaces is the categorical tensor product defined on a based $G$-space $X$ by

\begin{equation}\label{oldC}
\mathbb{C}X = \sC \otimes_{\Lambda} \bQ X, 
\end{equation}
where $\LA$ is the category of finite based sets $\bn =\{0, 1, \cdots, n\}$, $n\geq 0$, and based injections.  Here $\sC$ is seen as a contravariant functor $\LA \rtarr G\sU$, where $G\sU$ is the category of unbased $G$-spaces, by neglect of structure (see \autoref{LAtoLAG}) and $\bQ X$  is the covariant functor $\LA \rtarr G\sT$ defined by
$$ \bQ X(n) = X^n. $$
Functoriality on $\LA$ is obtained using basepoint inclusions \cite{MayGeo, MZZ}, and the assumption that $\sC(0)$ is a point gives that $\bC X$ inherits a basepoint from that of $X$.   Observe that 
\begin{equation}\label{Qprod}
\bQ X(m+n) = X^{m+n} = X^m\times X^n = QX(m) \times QX(n). 
\end{equation}
That is crucial in determining the monad structure on  $\bC X$.

Ignoring the monad structure, $\bC$ is a functor $G\sT\rtarr G\sT$ obtained by composing $\bQ\colon G\sT \rtarr \LA[G\sT] $ with the more general functor
$$\hat{\bC}\colon \LA[G\sT] \rtarr G\sT, \ \  \text{where} \ \ \hat{\bC}\sX = \sC\otimes_{\LA} \sX$$ 
for orbital presheaves $\sX$.  Thus $\bC X = \hat{\bC} QX$. Although our interest is in operads, $\sC$ here can be any functor $\sC\colon \LA^{op} \rtarr G\sT$.  It is convenient to insist that $\sC(0)=\{\ast\}$. 

We construct $\Cp\colon \OGop \rtarr \OGop$ analogously and in comparable generality.  We return to the monad structure in \autoref{OGmonad}.  The construction is motivated by the following result, which will put us in a conceptual framework for recognition principles for composite adjunctions that is developed in  \cite{KMZ1}.  It will be proven in \autoref{sec:construction-cpre}. 

\begin{prop}\label{prop:phi-C-commute} The following diagram commutes up to natural isomorphism when 
$\bC$ and $\Cp$ are constructed from a given functor $\sC\colon \LA^{op} \rtarr G\sU$. \begin{equation*} 
  \begin{tikzcd}
    G\sT \ar[r, "\bC"] \ar[d, "\bR"'] & G \sT \ar[d, "\bR"] \\
    \OGop \ar[r,"\Cp"'] & \OGop
  \end{tikzcd}
\end{equation*}
\end{prop}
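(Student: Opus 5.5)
The plan is to check the commutativity one level at a time. Concretely, I will show that for each orbit $G/H$ there is a homeomorphism
$$\big(\bC X\big)^H \iso \big(\Cp \bR X\big)(G/H),$$
natural in the based $G$-space $X$ and compatible with the restriction maps induced by $G$-maps $\ph\colon G/K \rtarr G/H$.

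\textbf{Formula for the right side.} I expect the construction of $\Cp$ in \autoref{sec:construction-cpre}, as the tensor product $\sCprest\otimes_{G\LA_{\star}}\bS_{\star}$, to evaluate at $G/H$ to a coend over $\LH\simeq \LG_{G/H}$ (\autoref{prop:n-delta}). Its summand indexed by an ordered $H$-set $S=\bn^{\al}$ should be $\sC(n)^{\GA_{\al}}\times \bS(S)$. Here $\bS(S)$ applied to $\bR X$ is $F_H(S,X)$. If $S=\coprod_i H/K_i$ with $K_i\subset H$ (\autoref{sense1}), then $F_H(S,X)=\prod_i X^{K_i}=\prod_i(\bR X)(G/K_i)$. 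By \autoref{standard}, $F_H(S,X)\iso (X^n)^{\GA_\al}$, so this summand is exactly $\big(\sC(n)\times X^n\big)^{\GA_\al}$. Summing over $S$ thus gives a candidate map
$$\coprod_{\al\colon H\rtarr \SI_n}\big(\sC(n)\times X^n\big)^{\GA_\al}\rtarr \big(\sC(n)\times_{\SI_n} X^n\big)^H \subset (\bC X)^H.$$
It sends $(c,x)$ to its class; that class is $H$-fixed because $h(c,x)=(c,x)\cdot\al(h)$. I will check that this map respects the coend relations in $\LH$ and so descends to the coend. Under the ordering conventions of \autoref{Orderings}, those relations are the $H$-injections (basepoint insertions) and the $H$-automorphisms $\SI_{\bn^{\al}}$ of \autoref{finiteGset2}. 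This check is formal: such morphisms are simply the elements of $\LA$ that commute with the twisted actions.

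\textbf{Main obstacle: bijectivity.} Surjectivity and injectivity are where fixed points must be commuted past a quotient.
\begin{itemize}
\item For surjectivity, take an $H$-fixed point of $\bC X$ and represent it by a reduced pair $(c,x)$, meaning no coordinate of $x$ is the basepoint. On reduced representatives the only remaining relation is the $\SI_n$-action, so for each $h$ there is $\si_h$ with $h(c,x)=(c,x)\si_h$. When $\SI_n$ acts freely on $\sC(n)$, as for an $E_\infty$ operad or via freeness on $X^n$ at nondegenerate points, $\si_h$ is unique and $h\mapsto \si_h^{-1}$ is a homomorphism $\al$. Then $(c,x)\in(\sC(n)\times X^n)^{\GA_\al}$.
\item For injectivity, two such preimages differ by some $\si\in\SI_n$, which conjugates $\al$ to $\al'$. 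That $\si$ is an isomorphism $\bn^{\al}\rtarr\bn^{\al'}$ in $\LH$, so the two preimages are identified in the coend.
\end{itemize}
For a general functor $\sC\colon\LA^{op}\rtarr G\sU$ without freeness, I expect the construction of $\sCprest$ must be arranged to record the stabilizing permutation as part of the data. In that case I would instead prove the isomorphism at the level of coends by a cofinality argument, comparing the category of elements indexing $(\sC\otimes_\LA \bQ X)^H$ with $\LH$.

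\textbf{Topology and naturality.} I will argue that the identification is a homeomorphism, not merely a bijection. For finite $H$, taking fixed points commutes with the filtered colimit over the filtration by $n$. On each filtration quotient it commutes with the pushouts along the closed basepoint inclusions. For naturality in $G/H$, a map $\ph\colon G/K\rtarr G/H$ with $\ph(eK)=gH$ acts on the left by $x\mapsto g x$ from $X^H$ to $X^K$. On the right it acts through $\ph^*$ on $\LG_{G/H}$, which \autoref{phistar2} identifies with $\ga_g^*$, restriction along conjugation. This sends $\GA_\al$-fixed points to $\GA_{\al\com c_{g^{-1}}}$-fixed points by translation by $g$, so the two actions agree. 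Finally, naturality in $X$ is clear from the formulas.
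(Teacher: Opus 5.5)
Your argument needs each $\sC(n)$ to be $\SI_n$-free, and neither of your two ways of going beyond that case works. First, $\SI_n$ does not act freely on reduced tuples in $X^n$: a tuple with two equal coordinates is fixed by a transposition. Second, no cofinality argument can handle a general $\sC$, because the statement is false there. Take $\sC=\sN$, $G=H=C_2$, and $X=S^0$ with trivial action. The coend you write down is the set of isomorphism classes of finite $C_2$-sets, namely $\mathbb{Z}_{\geq 0}\times\mathbb{Z}_{\geq 0}$ (numbers of free and of trivial orbits). On the other side, $(\bN X)^{C_2}=\mathbb{Z}_{\geq 0}$. Your map sends $(a,b)$ to $2a+b$, so it is not injective. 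This agrees with \autoref{ex:Npre}, which gives $\bN^{pre}\bR X(G/G)\iso \bN(X/C_2)\times\bN(X^{C_2})$.

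The paper's proof also assumes $\SI$-freeness, even though the statement as printed omits it. It is the hypothesis of \autoref{RCdiagram}, which supplies the right square of \autoref{eq:contra}. So the correct repair is to add that hypothesis and drop your fallback plan. The hypothesis holds for the $E_\infty$ operads used later.

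With that hypothesis, your proof is essentially the paper's. Your surjectivity and injectivity bullets are the pointwise content of \autoref{prop:fiberbundle}. Your identification of summands is \autoref{standard} together with \autoref{cuter2}. Cite \autoref{cuter2} explicitly, since $\sCpre_{G/H}(T)$ is defined as $\NewC(\bI_H T)^H$, not by evaluation on the fiber.

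Three details need filling in:
\begin{enumerate}
\item Preimages in the coend need not be reduced. Before comparing them, cut down along the $H$-injection from the $H$-invariant set of non-basepoint coordinates.
\item You assert that the permutation $\si$ in the injectivity step intertwines $\al$ and $\al'$. This needs justification, and it is freeness that gives it.
\item Your topology paragraph treats the basepoint pushouts and the colimit, but not the orbit step. You still need that the bijection $\coprod_{[\al]}(\sC(n)\times X^n)^{\GA_\al}/\SI_{\bn^{\al}}\rtarr(\sC(n)\times_{\SI_n}X^n)^H$ is a homeomorphism. That requires the covering or slice argument for the free $\SI_n$-action, which the paper obtains from \autoref{prop:bundle}.
\end{enumerate}
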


We define the functor $\Cp$ as a ``fiberwise categorical tensor product''.  For an orbital presheaf $\sX$,
\begin{equation}\label{newC0} 
\Cp\sX = \sCprest\otimes_{G\LA_{\star}} \bS_{\star}(\sX)
\end{equation}
is defined by specializing \autoref{const:tensorOverLA}.  We describe it in the next section, after explaining $S_{\star}$ and $\sCprest$ here.
The definition starts from the categorical Definitions \ref{defn:fiberOversG} and \ref{defn:cofiberOversG}, together with \autoref{defn:fiberOversGbis}; these specialize to define covariant and contravariant ``Grothendieck  $\OG$-functors".  

In  \autoref{Gcats}, we use $G$-categories and $G$-functors to give a general way to construct Grothendieck $\OG$-functors from  $G$-functors that occur in nature.  Informally, $G$-categories are categories with compatible actions of $G$ on Hom objects and $G$-functors are functors that are equivariant on Hom objects.  We define fixed-point functors,  both denoted $\ul{R}_{\star}$, from appropriate $G$-categories to covariant and contravariant $\OG$-categories.  A functor $\sC\colon \LA^{op} \rtarr G\sU$, where $G\sU$ is the category of unbased $G$-spaces and $G$-maps, prolongs to a $G$-functor $\NewC\colon G\LA^{op}\rtarr G\sU$, and we have the contravariant $\OG$-functor
\begin{equation}\label{sCpre}
\sCprest = \ul{\bR}_{\star}\NewC.
\end{equation}
 
In \autoref{sec:covariant-input}, we construct intuitively compelling composite functors
$$\bP_{G/H}\bQ_{G/H}\colon \LG_{G/H} \rtarr  \sT. $$
and show that these give a covariant Grothendieck $\OG$-functor 
$\bP_{\star}\bQ_{\star}$.  Here $\bP_{\star}$ is obtained by  prolongation from functors to $G$-functors and $\bQ_{\star}$ is obtained by power operations. This construction is interesting and gives one
 route to a monad on $G\sO^{op}[\sT]$, namely the composite monad $\bR\bC\bL$, where $\bL$ is left adjoint to 
 $\bR$.  That is {\em{not}}  the monad we are after.  In \autoref{sec:covariant-sec}, we construct different functors 
 $$\bS_{G/H} \colon \LG_{G/H} \rtarr  \sT. $$ 
 These are defined from sections of $G$-functors, and they give a covariant Grothendieck $\OG$-functor  $\bS_{\star}$. In analogy with \autoref{Qprod}, we will have 
 \begin{equation}\label{Sprod}
( \bS_{G/H}\sX)(S\amalg T) = (\bS_{G/H}\sX)(S)\times (\bS_{G/H}\sX)(T)
 \end{equation} 
 for an orbital presheaf $\sX$ and $G$-sets $S$ and $T$ over $G/H$.  This gives the route to the monad $\Cp = \hat{\bC}^{pre}\com \bS_{\star}$ that we want.
 
 \subsection{A more explicit description of $\Cp$}\label{Cpre2}

Let $\sX$ be an orbital presheaf.  On orbits $G/H$,  $\Cp \sX$ is given by
\begin{equation}\label{newC1}
(\Cp \sX)(G/H) =   \sCpre_{G/H} \otimes_{\LG_{G/H}} \bS_{G/H}(\sX).
 \end{equation}
Explicitly, this is the coequalizer exhibited in the diagram
\begin{equation}\label{newC2}
\xymatrix@1{\coprod_{(S,T)}  \sCpre_{G/H}(T)\times \LG_{G/H}(S,T) \times (\bS_{G/H}\sX)(S) 
\ar@<-.7ex>[d] \ar@<.7ex>[d] \\
\coprod_{T}   \sCpre_{G/H}(T)\times (\bS_{G/H}\sX)(T) \ar[d] \\
  \sCpre_{G/H} \otimes_{\LG_{G/H}} \bS_{G/H}\sX}
\end{equation}
Here $(S,T)$ runs over pairs of $G$-sets over $G/H$ and the parallel arrows are disjoint unions of maps induced by the respective action maps
 $$ \sCpre_{G/H}(T)\times \LG_{G/H}(S,T) \rtarr \sC_{G/H}(S)$$
 and  
 $$\LG_{G/H}(S,T)\times (\bS_{G/H}\sX)(S) \rtarr  (\bS_{G/H}\sX)(T).$$
 A $G$-map $\ph\colon G/K\rtarr G/H$ induces a functor $\ph^*\colon \LG_{G/H}\rtarr \LG_{G/K}$.  To define $\Cp \sX$ on morphisms, in Sections  \ref{Gcats} and \ref{sec:covariant-sec} we construct functors 
\begin{equation}\label{Cphi} \sCpre_{\ph}\colon \sCpre_{G/H} \rtarr \sCpre_{G/K}\com (\ph^*)^{op}
\end{equation}
and
\begin{equation}\label{Sphi}
\bS_{\ph}\sX\colon \bS_{G/H}\sX \rtarr \bS_{G/K}\sX \com \ph^*
\end{equation}
that give contravariant and covariant Grothendieck $\OG$-functors $\sC^{pre}_{\star}$ and $\bS_{\star}$.   Then \autoref{prop:fiberOversG2} kicks in to convert these to functors on the Grothendieck categories 
$G\LA^{vop} _{\star}$ and $G\LA_{\star}$. 
Expressed in terms of the respective categorical action maps, for $G$-sets $S$ and $T$ over $G/H$, these are given by maps
$$ \xymatrix{ \sCpre_{G/H}(T)\times \LG_{G/H}(S,T) \ar[d]   & \text{and} &  \LG_{G/H}(S,T) 
\times (\bS_{G/H}\sX)(S)\ar[d]\\
\sC_{G/K}(\ph^*S) & &  (\bS_{G/K}\sX)(\ph^*T) \\} $$
These maps and the morphism maps $\ph^*\colon \LG_{G/H}(S,T) \rtarr \LG_{G/K}(\ph^*S,\ph^*T)$ induce maps of coequalizers
$$ \ph^* = \sCpre_{\ph} \otimes_{\ph^*}\bS_{\ph}(\sX) \colon
(\Cp \sX)(G/H) \rtarr (\Cp\sX)(G/K)  $$
that complete the construction of $\Cp \sX$ as an orbital presheaf. 

We show in \autoref{OGmonad}  that the operad structure on $\sC$ leads to the definition of the structure maps exhibiting $\Cp$ as a monad in $G\sO^{op}[\sT]$. We can replace $\bS_{\star}$ by $\bP_{\star}\bQ_{\star}$ in the argument, but that results in the monad  $\bR\bC\bL$.  

\section{The input Grothendieck $\OG$-functors}

\subsection{$\ul{\bR}_{\star}\colon \ul{G\LA}[\ul{G\sT}] \rtarr \LG_{\star}[\sT]$ and $\ul{\bR}_{\star}\colon \ul{G\LA}^{op} [\ul{G\sU}] \rtarr \LG_{\star}^{op}[\sU]$}\label{Gcats}

Recall that $G\sU$ is the category of $G$-spaces and $G$-maps and $G\sT$ is the category of based $G$-spaces and based $G$-maps.  Following \cite{MMO}, we write $\ul{G\sU}$ for the self-enrichment of $G\sU$. It is the category of $G$-spaces and continuous maps, with $G$ acting by conjugation on the Hom function spaces.   We define $\ul{G\sT}$ similarly, using based function spaces $F(X,Y)$, the basepoint being the trivial map $X \to {\ast} \to Y$. Since $F(X,Y)^G = G\sT(X,Y)$, we can view $G\sT$ as the fixed point category $(\underline{G\sT})^G$.  

We define $\ul{G\LA}[\ul{G\sT}]$ to be the category of $G$-functors  
$\sY\colon\ul{G\LA}  \rtarr \ul{G\sT}$.  This means that $\sY$ assigns a based $G$-space $\sY(S)$ to each finite based $G$-set $S$ and assigns $G$-maps from the hom $G$-sets of based injections $S\rtarr T$ to the hom $G$-spaces $F(\sY(S),\sY(T))$.  Thus $\ul{G\LA}[\ul{G\sT}]$ is an enriched analogue of the category $\LA[G\sT]$  of functors $\LA\rtarr G\sT$. Without basepoints, we have the similar enriched category $\ul{G\LA}[\ul{G\sU}]$. 

\begin{defn}\label{ulRstar} The functor $\ul{\bR}_{\star}\colon   \ul{G\LA}[\ul{G\sT}] \rtarr \LG_{\star}[\sT]$ is the levelwise fixed-point-space functor given on $\sZ\colon \ul{G\LA} \rtarr \ul{G\sT}$ by
\begin{equation} \label{eq:22}
  (\ul{\bR}_{G/H}\sZ)(T) = \sZ(\bI_HT)^{H}
\end{equation}
where $T$ is a $G$-set over $G/H$ and $\bI_H$ is the forgetful functor defined in \autoref{defn:Induce-from-H}.  Both vertical functoriality on $G$-injections $T\rtarr T'$ over $G/H$ and horizontal functoriality on $G$-maps $G/K\rtarr G/H$ are immediate, so that we have a covariant Grothendieck $\OG$-functor $\ul{\bR}_{\star}\sZ$ in $\LG_{\star}[\sT]$. 
\end{defn}

The contravariant analogue is precisely similar, with notation chosen to mesh with the operadic examples to come later.

\begin{defn}\label{ulRstarcon} The functor $\ul{\bR}_{\star}\colon \ul{G\LA}^{op} [\ul{G\sU}] \rtarr \LG_{\star}^{op}[\sU]$ is the levelwise fixed-point-space functor given on 
$\NewC\colon  \ul{G\LA}^{op} \rtarr \ul{G\sU}$ by
\begin{equation}\label{eq:33}
(\ul{\bR}_{G/H}\NewC)(T) = \NewC(\bI_H T)^H
\end{equation}
where $T$ is a $G$-set over $G/H$.  It is again an immediate verification that we have a contravariant Grothendieck $\OG$-functor 
$\ul{\bR}_{\star}\NewC$ in $\LG_{\star}^{op}[\sU]$.  In more detail, for a $G$-map $\ph\colon G/K\rtarr G/H$ and a map $f\colon S\rtarr T$ of $G$-sets over $G/H$, we obtain
an induced map $\NewC(\bI_H T)^H \rtarr  \NewC(\bI_K \ph^*(T))^K$ by the contravariant functoriality of $\NewC$ and of passage to fixed points.\footnote{The diagrams in \autoref{phistar} may help give the picture.}  With $\sCprest = \ul{R}\NewC$, this gives the functors $\sCpre_{\ph}$  promised in \autoref{Cphi}. 
\end{defn}
  
\subsection{The covariant input functors $\bQ_{\star}$ and $\bP_{\star}$}
 \label{sec:covariant-input}
 
We here define covariant Grothen\-dieck $\OG$-functors 
$\bQ_{\star}$ and $\bP_{\star}$ and construct  a commutative diagram
\begin{equation} \label{eq:covariant}
\xymatrix{
G\sT \ar[r]^-{\bQ} \ar[d]_{\bR}  & \LA[G\sT] \ar[r]^-{\bP} \ar[d]^{\bR_{\star}} & \ul{G\LA}[\ul{G\sT}] \ar[d]^{\ul{\bR}_{\star}}\\
\OGop \ar[r]_-{\bQ_{\star}}   & \big(\LA \times G\sO^{op}\big)[\sT]  \ar[r]_-{\bP_{\star}} &\LG_{\star}[\sT] \\}
\end{equation}
The bottom row is {\em{not}} what we will use in defining $\Cp$, although it seems to be dictated by comparison with $\bC$.  Its consideration will lead us to the right substitute $\bS_{\star}$ in \autoref{sec:covariant-sec}.  Remember that, for a $G$-space $X$,
\begin{equation}\label{QR} 
\bR(X)(G/H) = X^H \ \ \text{and} \ \ \bQ(X)(\bn) = X^n.
\end{equation} 
The bottom middle category in \autoref{eq:covariant} is the category of functors $\LA \times G\sO^{op} \rtarr \sT$, and we have the identifications of categories from \autoref{ABsee}:
$$G\sO^{op}\big[\LA[\sT]\big] \iso \big(\LA \times G\sO^{op}\big)[\sT] \iso \LA\big[G\sO^{op}[\sT]\big].$$
\begin{defn}\label{RQstar} The functor $\bQ_{\star}$ in \autoref{eq:covariant} is the levelwise power functor given on $\sX\colon G\sO^{op} \rtarr \sT$ by
\begin{equation}\label{Qstar}
(\bQ_{\star}\sX)(n,G/H) = \sX(G/H)^n.
\end{equation}
The functor $\bR_{\star}$ is the levelwise fixed-point functor given on $\sY\colon \LA \rtarr G\sT$ by
\begin{equation}\label{Rstar}
(\bR_{\star}\sY)(n,G/H) = \sY(n)^H.
\end{equation}
\end{defn}

\begin{lem} The left square in \autoref{eq:covariant} commutes.
\end{lem}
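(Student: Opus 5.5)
The plan is to compare the two composites directly, object by object, and then check that the identifications are compatible with all the structure maps. Fix a based $G$-space $X$. Going right then down gives the functor $\LA\times G\sO^{op}\rtarr \sT$ with
$$(\bR_{\star}\bQ X)(n,G/H) = (\bQ X(n))^H = (X^n)^H,$$
where $G$ acts diagonally on $X^n$, since $\bQ X$ takes values in $G\sT$. Going down then right gives
$$(\bQ_{\star}\bR X)(n,G/H) = (\bR X)(G/H)^n = (X^H)^n.$$
The first step is to observe that $(X^n)^H = (X^H)^n$ as subspaces of $X^n$. A point $(x_1,\dots,x_n)$ is fixed by the diagonal action of $h$ exactly when each $hx_i = x_i$, so the two sides are equal as sets. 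Both carry the subspace topology from $X^n$, and the basepoint is $(\ast,\dots,\ast)$ in each case. The identification is therefore an equality, not merely an isomorphism.

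Next I check functoriality in both variables. On a based injection $\bm\rtarr\bn$ in $\LA$, $\bQ X$ acts by inserting basepoints in the missed coordinates and permuting coordinates. This map is $G$-equivariant, so it restricts to $H$-fixed points. On $(X^H)^n$ the functor $\bQ_{\star}$ applies the same formula to the based space $X^H$. Both maps are the restriction of the same map $X^m\rtarr X^n$, so they agree.

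On a $G$-map $\ph\colon G/K\rtarr G/H$ with $\ph(eK)=gH$, we have $g^{-1}Kg\subset H$. Here $\bR X$ sends $\ph$ to the map $X^H\rtarr X^K$ given by $x\mapsto gx$, and $\bQ_{\star}$ applies this coordinatewise. On the other side, $\bR_{\star}$ applies $\bR$ to the $G$-space $X^n$, giving $(x_i)\mapsto g(x_i) = (gx_i)$. These agree. Naturality in $X$ is clear because every map involved is a restriction of a map $f^n$.

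I do not expect a real obstacle. The only point needing care is to set up the two composites with the same convention for contravariance in $\OG$, namely action by the element $g$ chosen in \autoref{gammag}. With that convention the square commutes strictly, not just up to natural isomorphism.
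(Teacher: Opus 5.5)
Your proposal is correct and follows the same route as the paper, whose proof is the one-line observation that $(X^n)^H$ is identified with $(X^H)^n$ compatibly with functoriality on $\LA\times G\sO^{op}$; you have written out that compatibility check explicitly in each variable. One small point: no convention on the choice of $g$ is actually needed, since for $x\in X^H$ the point $gx$ depends only on the coset $gH$.
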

\begin{proof} For a $G$-space $X$, $(X^n)^H$ can be identified with $(X^H)^n$, respecting functoriality on $\LA\times G\sO^{op}$.
\end{proof}

The following result gives the prolongation functor, denoted $\bP$ in \autoref{eq:covariant}. The $G$-trivial $G$-sets $\bn^{\epz_n}$ give an inclusion $\LA\subset G\LA$, and $\LA[\ul{G\sT}] = \LA[G\sT]$ since $G$ acts trivially on $\LA$, so that a $G$-functor $\LA\rtarr \ul{G\sT}$ takes values in $G$-fixed hom spaces.

\begin{prop}\label{PUone}  The forgetful functor  $\bU\colon \ul{G\LA}[\ul{G\sT}] \rtarr \LA[\ul{G\sT}] = \LA[G\sT]$ has a left adjoint prolongation functor $\bP$, and $(\bP,\bU)$ is an adjoint equivalence. 
\end{prop}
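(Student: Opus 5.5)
The plan is to construct $\bP$ explicitly as an enriched left Kan extension along the inclusion $\io\colon \LA \rtarr \ul{G\LA}$ of the $G$-trivial sets $\bn^{\epz_n}$, and then to show that $\io$ is an equivalence of $G$-categories. The key observation is about the hom $G$-sets $\ul{G\LA}(S,T)$, which consist of \emph{all} based injections, with $G$ acting by conjugation. Every finite based $G$-set $S=\bn^{\al}$ is isomorphic in $\ul{G\LA}$ to the trivial $G$-set $\bn$, via the identity bijection $\mathrm{id}\colon \bn\rtarr \bn^{\al}$. This bijection is not $G$-fixed, but it is an isomorphism in the enriched category. Moreover, for trivial $G$-sets we have $\ul{G\LA}(\bm,\bn)=\LA(\bm,\bn)$ with trivial $G$-action. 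So $\io$ is fully faithful and essentially surjective as a $G$-functor. The claim then reduces to the general principle that restriction along such a $G$-functor is an equivalence, with inverse given by left Kan extension.

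Concretely, I would define $\bP$ on $\sY\in \LA[G\sT]$ by the enriched coend
$$(\bP\sY)(S) = \ul{G\LA}(\io(-),S)_+\wedge_{\LA}\sY.$$
By the coYoneda lemma together with the isomorphism $\bn\iso \bn^{\al}$, this is homeomorphic to $\sY(n)$ with twisted $G$-action $g\cdot y = \sY(\al(g))(g y)$. This is exactly the space $\sY(n)^{\al}$ of \autoref{Yalpha}, where $\sY(n)$ is viewed as a $(G\times\SI_n)$-space. An injection $f\colon \bm^{\be}\rtarr \bn^{\al}$ acts through $\sY(f)$, since $f$ is a morphism of $\LA$. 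The next step is to check that the map $f\mapsto \sY(f)$ from $\ul{G\LA}(\bm^{\be},\bn^{\al})$ to $F(\sY(m)^{\be},\sY(n)^{\al})$ is $G$-equivariant for the conjugation actions. This is a one-line computation using $g\cdot f = \al(g)\com f\com \be(g)^{-1}$, the functoriality of $\sY$, and the fact that $G$ acts on $\sY(n)$ by maps commuting with the $\LA$-action. So $\bP\sY$ is a $G$-functor.

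For the adjunction, the unit $\sY\rtarr \bU\bP\sY$ is the identity, since for trivial $\al$ no twisting occurs. For $\sZ\colon\ul{G\LA}\rtarr\ul{G\sT}$, the counit $\bP\bU\sZ\rtarr\sZ$ at $S=\bn^{\al}$ is the homeomorphism
$$\sZ(\mathrm{id})\colon \sZ(\bn)\rtarr \sZ(\bn^{\al}),$$
where $\mathrm{id}\colon \bn\rtarr \bn^{\al}$ is the identity bijection viewed as a morphism of $\ul{G\LA}$. This step is where the enriched (rather than fixed-point) structure is used essentially, and I expect it to be the main point to get right. I must show that $\sZ(\mathrm{id})$ is $G$-equivariant from the twisted action to the given action on $\sZ(\bn^{\al})$. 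This follows from equivariance of $\sZ$ on hom $G$-sets: $g$ acts on the morphism $\mathrm{id}$ by $g\cdot \mathrm{id} = \al(g)$, so
$$g\,\sZ(\mathrm{id})\,g^{-1} = \sZ(\al(g)),$$
which rearranges to the required intertwining. Naturality of the counit in $S$ and the triangle identities are then formal, because all the structure maps are images under $\sZ$ of composable injections. Since the unit and counit are isomorphisms, $(\bP,\bU)$ is an adjoint equivalence.
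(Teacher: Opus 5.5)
Your proposal is correct and follows essentially the same route as the paper: $\bP$ is the categorical tensor product $\ul{G\LA}(-,\bn^{\al})\otimes_{\LA}\sY\iso\sY(\bn)^{\al}$, the unit is the evident identification, and the counit is an isomorphism by the Yoneda-type argument using the identity function $\bn\rtarr\bn^{\al}$ as a non-$G$-fixed morphism of $\ul{G\LA}$. The paper defers the formal details to \cite[Theorem 2.37]{MMO}, whereas you spell out the twisted action and the equivariance checks for $\bP\sY$ and for the counit explicitly, which is fine.
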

\begin{proof}  This is implied by the formal part of \cite[Theorem  2.37]{MMO}.   For $\sY$ in 
$\LA[\ul{G\sT}]$,  $\bP \sY$ is given by the categorical tensor product
\begin{equation}\label{PUtwo}
(\bP \sY)(\bn^{\al})  = \ul{G\LA}(-, \bn^{\al})\otimes_{\LA} \sY \iso \sY(\bn)^{\al}.
\end{equation}
Since $\bU$ is full and faithful, the unit $\et\colon \sY \rtarr \bU\bP \sY$ is the evident identification.  For $\sZ\in \LA_G[\ul{G\sT}]$, the counit $\epz\colon \bP \bU \sZ \rtarr \sZ$ is an isomorphism by a Yoneda type verification using the identity function on $\bn$, regarded as an element of $\LA_G (\bn,\bn^{\al})$.
\end{proof}

\begin{rem}\label{contraP}  With $\sT$ replaced by $\sU$ and covariant functors replaced by contravariant functors, we have a precisely analogous adjunction $(\bP,\bU)$ relating 
$\LA^{op}[G\sT]$ and $\ul{G\LA}^{op}[\ul{G\sT}]$.  For $\bC \in \LA^{op}[G\sT]$, we write $\bC_G =\bP\sC$ and take
$$\NewC(\bn^{\al})  =  \sC(\bn)^{\al}.$$
This will play an important role later, when we consider operads.
\end{rem}

The following definition completes the construction of the diagram \autoref{eq:covariant}.

\begin{defn}  Let $\bL_{\star}$ be the left adjoint of $\bR_{\star}$.  It is given explicitly by 
$$(\bL_{\star}\sY)(\bn) = \sY(\bn,G/e).$$  
Then define $\bP_{\star}  = \ul{\bR}_{\star} \bP \bL_{\star}$.  Visibly, $\bL_{\star}\bR_{\star} = \Id$ and therefore $\bP_{\star}\bR_{\star} =  \ul{\bR}_{\star} \bP$.
\end{defn}

\begin{con}\label{humbug} Similarly, $\bL\bR = \Id$, where $\bL$ is the left adjoint of $\bR$.  Therefore we can replace $\bQ_{\star}$ by $\bR_{\star}\bQ\bL$ and still have the left square commute. Use of these substitutes instead of the $\bS_{\star}$ of \autoref{sec:covariant-sec} leads by an easy formal argument  in \cite{KMZ1}  to the monad $\bR\bC\bL$ on the category of orbital presheaves.  
\end{con}

The  monad $\bR\bC\bL$ plays a conceptual role in the theory of \cite{KMZ1}, but it  is not the Costenoble-Waner monad that we seek.  There are other wrong choices to be left in decent obscurity.  We turn to the choice that works.

\subsection{The section covariant input functor $\bS_{\star}$}\label{sec:covariant-sec}
We construct a covariant Grothen\-dieck functor $\bS_{\star}$ here and prove in the next section that the following replacement of the diagram \autoref{eq:covariant} commutes. 

\begin{equation} \label{eq:covariant2}
\xymatrix{
G\sT \ar[r]^-{\bQ} \ar[d]_{\bR}  & \LA[G\sT] \ar[r]^-{\bP}  & \ul{G\LA}[\ul{G\sT}] \ar[d]^{\ul{\bR}_{\star}}\\
\OGop    \ar[rr]_{\bS_{\star} } &  &\LG_{\star}[\sT] \\}
\end{equation}
We have replaced the bottom row $\bP_{\star} \bQ_{\star}$ of \autoref{eq:covariant} with the single functor $\bS_{\star}$,  which does not factor through $(\LA\times G\sO^{op})[\sT]$.
As we shall see, the definition of $\bS_G$ is dictated by \autoref{Sprod} and its analog for $\ul{\bR}_G\com \bP\com \bQ$. Intuitively, these force us to define $\bS_G$ on orbits and then extend it by additivity, taking disjoint unions of finite $G$-sets to products of based spaces.  We then extend $\bS_G$ to $\bS_{\star}$ by composing $\bS_G$ with the forgetful functors $\bI_H$ of \autoref{defn:Induce-from-H}.  

\begin{defn}\label{SecDefn} Let $\sX$ be an orbital presheaf and $S=\bn^{\al}$ be a finite (unbased) $G$-set.  For $1\leq i\leq n$, let $G_i =\{g|gi = i\}$ denote the stabilizer of $i$.   We first define an action of $G$ on
$\amalg_{i=1}^n\sX({G/G_i})$ and then use that to define $\bS_G(\sX) \colon \LG\rtarr \sT$. 

For $g\in G$, suppose 
$gi\equiv \al(g)(i) =j$. Then $gG_ig^{-1} = G_j$.  Left multiplication by $g^{-1}$ specifies a $G$-map of orbits
 $$g^{-1}\colon  G/G_j \rtarr G/g^{-1}G_jg = G/G_i.$$
These maps satisfy $(g'g)^{-1} = g^{-1}(g')^{-1}$ and thus define an action of $G$ on $\amalg_{i=1}^n G/G_i$.  By contravariant functoriality, application of $\sX$ to the maps ${g^{-1}}$ gives maps 
\begin{equation}\label{gee}
g\colon \sX(G/G_i) \rtarr \sX(G/G_{j})
\end{equation}
that specify the required (left) action of $G$ on $\amalg_{i=1}^n \sX(G/G_i)$.

Define a $G$-map
\begin{equation*}
\rh\colon \amalg_{i=1}^n\sX({G/G_i}) \rtarr  S
\end{equation*}
by sending $\sX({G/G_i})$ to $i$.  A section of $\rh$ is a $G$-map 
$$  \si\colon S \rtarr \amalg_{i=1}^n \sX(G/G_i)$$
such that $\rh\com \si = \id$.   Define
$$ \bS_G(\sX)(S) = \mathrm{Sec}_G\big(S,\amalg_{i=1}^n \sX(G/G_i)\big) $$ 
to be the space of all such sections.  It is based, with basepoint given by the unique section $\si$ such that $\si(s)$ is the basepoint in its image based space for all $s\in S$.

For $G$-sets $S$ and $T$, we order $S\amalg T$ in the evident way, first $S$ and then $T$. It follows directly that $\bS_G$ converts disjoint unions to products as in \autoref{Sprod}.   Thus if $S=S_1\amalg \cdots \amalg  S_n$ is a decomposition of $S$ as a disjoint union of orbits, then
\begin{equation*}
(\bS_G\sX)(S) \iso \times_{i=1}^n (\bS_G\sX)(S_i). 
\end{equation*}
When $n=1$, so that $S = G/H$ for some $H$, we have $n=t_{g/h}$.  A section is then determined by the image of the identity coset in $\sX(G/H$), which is arbitrary.  Indeed,  given $x\in \sX(G/H)$, there is a unique $G$-map 
$G/H\rtarr X$ that sends $eH$ to $x$ since $\si$ must send the coset $g_iH$ to  $g_i x \in \sX(G/H)$.  This gives that
$(\bS_G \sX)(G/H) = \sX(G/H)$ and therefore

\begin{equation}\label{SecWed}
(\bS_G\sX)(S) \iso \times_{i=1}^n \sX(S_i). 
\end{equation}

We could take this as an alternative definition of $\bS_G\sX$, except that the decomposition of $S$ as a disjoint union of orbits is not canonical.  It is unique up to isomorphism, hence by specialization of the following details, that alternative is well-defined up to isomorphism.   We must define $\bS_G(\sX)(\ps)$ on $G$-injections 
$\ps\colon S=\bm^{\al} \rtarr \mathbf{n}^{\beta} = T$, such as isomorphisms.   Let $K_i$ be the stabilizer of 
$i \in \mathbf{m}^{\alpha}$ and $H_j$ be the stabilizer of $j \in \mathbf{n}^{\beta}$.  
Then we can identify $S$ with $\amalg_i G/G_i$ and $T$ with $\amalg_j G/G_j$.  We have $K_i = H_j$ 
when $j=\ps(i)$.  Then  $\ps$ maps $G/G_i$ isomorphically onto $G/G_j$ and the functoriality of $\sX$ gives 
homeomorphisms $\sX(G/G_i)\iso \sX(G/G_j)$. 
Inserting the basepoint in the coordinates not in the image of $\ps$, these maps assemble to a
$G$-equivariant map
$$ (\bS\sX)(S) = \prod_{i=1}^m \sX(G/K_i) \rtarr  \prod_{j=1}^n\sX(G/H_j)= (\bS\sX)(T)$$
over $\ps$, and this construction is functorial.
\end{defn}

\begin{defn}\label{SstarDefn} Let $\sX$ be an orbital presheaf. We define $\bS_{\star}\sX \colon \LG_{\star} \rtarr \sT$. We first define $\bS_{G/H}\sX\colon \LG_{G/H} \rtarr \sT$.  For a $G$-set $T$ over $G/H$, define
\begin{equation}\label{secGH}
(\bS_{G/H}\sX)(T) = (\bS_G\sX)(\bI_H T)
\end{equation}
where $\bI_H$ is the forgetful functor of \autoref{defn:Induce-from-H}.
As in \autoref{defn:fiberOversG}(ii), we must define compatible natural transformations 
$$(\bS_{G/H}\sX)_{\ph}\colon  (\bS_{G/H}\sX) \rtarr (\bS_{G/K}\sX)\com \ph^*$$
for $G$-maps $\ph\colon G/K\rtarr G/H$, as promised in \autoref{Sphi}.  We can rewrite this as
$$(\bS_{G}\sX)_{\ph}\colon  (\bS_{G}\sX)\circ \bI_H \rtarr (\bS_{G}\sX)\circ \bI_K\com \ph^*.$$
For a $G$-set $T=(\mathbf{t_{g/h}n})^{\beta}$ over $G/H$, we can identify $\ph^*T$ as a $G$-set 
$\mathbf{t_{g/k}n}^{\ga}$ over $G/K$, where $\ga$ is constructed using $\be$ and $\ph$.  
Then the pullback $\ph^*(T)\rtarr T$
over $\ph$ of \autoref{reverse} induces (as $\sX$ is contravariant)
$$ \amalg_{i=1}^{t_{g/h}n} \sX(G/ G_i) \rtarr \amalg_{j=1}^{t_{g/k}n} \sX(G/ G_j) 
$$
This induces the map on sections
$$
\mathrm{Sec}_G(T,\amalg_{i=1}^{t_{g/h}n} \sX(G/ G_i)) \rtarr 
\mathrm{Sec}_G(\ph^*T,\amalg_{j=1}^{t_{g/k}n} \sX(G/ G_j),
$$
which is defined to be the map $(\bS_G\sX)_{\ph}\colon (\bS_G\sX)(T) \rtarr (\bS_G\sX)(\ph^*T)$.  Verifying its conditions,
\autoref{defn:fiberOversG} shows that we have constructed a functor $\bS_{\star}\sX \in \LG_{\star}[\sT]$. 
\end{defn}

\subsection{The proof that $\bS_{\star}\com \bR$ can be identified with $\ul{\bR}_{\star} \com \bP\com \bQ$}\label{secdia}With definitions in place, we can now prove that the diagram \autoref{eq:covariant2} commutes, starting with the following partial result in that direction.

\begin{lem}\label{dumbPeter} The composites $\ul{\bR}_G\com \bP\com \bQ$ and $\bS_G\com \bR$  
can be identified.
\end{lem}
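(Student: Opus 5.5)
We will compare the two composites $\LG\rtarr\sT$ objectwise on a finite $G$-set $S=\bn^{\al}$ for a based $G$-space $X$. Then we will check that the identifications are compatible with based $G$-injections and natural in $X$.

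On the left side, \autoref{PUtwo} gives $(\bP\bQ X)(\bn^{\al}) \iso (X^n)^{\al}$, and $\ul{\bR}_G$ takes $G$-fixed points. By \autoref{standard} this is
$$(\ul{\bR}_G\bP\bQ X)(S) = ((X^n)^{\al})^G \iso (X^n)^{\GA_{\al}} \iso F_G(S,X),$$
the space of $G$-maps $S\rtarr X$, with $S$ unbased. On the right side, $(\bS_G\bR X)(S)$ is the space of $G$-sections of $\rh\colon \amalg_i X^{G_i}\rtarr S$. Here $g\colon X^{G_i}\rtarr X^{G_j}$ is induced by $\bR X$ applied to $g^{-1}\colon G/G_j\rtarr G/G_i$, and that map is just multiplication by $g$ on $X$. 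A map $f\colon S\rtarr X$ defines $\si_f(i)=f(i)$. We have $f(i)\in X^{G_i}$ automatically because $G_i$ fixes $i$. Moreover $\si_f$ is a $G$-map precisely when $f$ is, since $\si_f(gi)=f(gi)$ and $g\cdot\si_f(i)=gf(i)$. Conversely, a section $\si$ gives $f=\mathrm{incl}\com\si$. These assignments are inverse homeomorphisms, because both spaces are subspaces of $X^n$ cut out by the same equivariance condition. The coordinatewise condition $x_i\in X^{G_i}$ is implied by equivariance, so it imposes nothing extra. Basepoints correspond on both sides.

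We then check naturality in a based $G$-injection $\ps\colon \bm^{\al}\rtarr\bn^{\be}$. On the left, $\bQ$ acts by basepoint insertion in the coordinates not in the image of $\ps$ and by permuting coordinates along $\ps$. \autoref{PUtwo} identifies the $G$-functor structure on $\ul{G\LA}$ with this same formula. Restricted to fixed points, it sends $f\colon S\rtarr X$ to the map $T\rtarr X$ that agrees with $f$ on $\ps(S)$ and is the basepoint elsewhere. On the right, \autoref{SecDefn} uses the identifications $X^{K_i}=X^{H_{\ps(i)}}$ and inserts basepoints off the image of $\ps$, which is the same formula. Naturality in $X$ is clear on both sides, since everything is built from coordinatewise application of maps.

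The main obstacle will be the bookkeeping in the identification $(\bP\bQ X)(\bn^\al)\iso (X^n)^\al$. The $G$-action there comes from the conjugation action on $\ul{G\LA}(\bn,\bn^{\al})$, and we must confirm that it yields exactly the twisted action $g\cdot(x_i)=(gx_{\al(g)^{-1}(i)})$. Only then do its fixed points match the $G$-equivariance condition on sections, with the correct direction of the permutation. I would handle this by writing out the Yoneda identification with the identity function $\bn\rtarr\bn^{\al}$, as in the proof of \autoref{PUone}, and tracking a single coordinate.
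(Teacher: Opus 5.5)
Your proof is correct, but it takes a different route from the paper's.

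The paper computes the two functors only on orbits. There \autoref{YesYes} gives $((X^t)^{\alpha_{G/K}})^G\cong X^K=(\bS_G\bR X)(G/K)$. It then handles a general $S$ by noting that both functors convert disjoint unions into products; for the left side this is $(X^{m+n})^{\Gamma_{\alpha+\beta}}=(X^m)^{\Gamma_\alpha}\times(X^n)^{\Gamma_\beta}$. Compatibility with $G$-injections is left to inspection.

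You instead identify both sides, for an arbitrary finite $G$-set $S$, with the single space $F_G(S,X)$:
\begin{enumerate}
\item the left side by the general case of \autoref{standard}, of which \autoref{YesYes} is the orbit specialization;
\item the right side by observing that a $G$-section of $\rho$ is exactly a $G$-map $S\to X$, since the condition $\sigma(i)\in X^{G_i}$ holds automatically.
\end{enumerate}

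Your route is canonical. It needs no choice of orbit decomposition, which the paper itself notes is non-canonical in \autoref{SecDefn}. It gets additivity for free, and it reduces the check on morphisms to comparing two ``extend by the basepoint'' formulas, a step the paper does not write out. The paper's route is shorter and stresses that additivity is what dictates the definition of $\bS_G$.

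The permutation-direction bookkeeping you single out as the main obstacle is exactly what \autoref{standard} packages. Citing it, as you already do, suffices, and no separate Yoneda computation is needed.
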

\begin{proof} Let $X$ be a $G$-space. First consider an orbit $G/K= \mathbf{t}^{\al}$, where $t = t_{g/k}$ and 
$\al=\al_{G/K}$.  Here the definitions and  \autoref{YesYes} give the first equality and  the isomorphism in
\begin{equation}\label{Yeah}
(\bR_G \bP\bQ)(X)(G/K) =({(X^t)^{\al}})^G \iso X^K =  (\bS_G\bR)( X)(G/K).
\end{equation}
The second equality is part of the definition of $\bS_G$. 

Since the functor $(\bS_G\bR)( X)$ takes disjoint unions to products, it remains to check that the functor $(\bR_G\bP\bQ)( X)$ also does so.  Thus let $S = \bm^{\al}$ and $T = \bn^{\be}$.  Then $S\amalg T = (\bm+\bn)^{\al\times \be}$ where 
$\al+ \be$ denotes the evident composite
$$\xymatrix@1{ G \ar[r]^-{\DE} & G\times G \ar[r]^-{\al\times \be} & \SI_m\times \SI_n \ar[r]^-{\subset} & \SI_{m+n}.\\}$$
A quick inspection shows that
$$ (X^{m+n})^{\GA_{\al+\be}} = (X^m)^{\GA_{\al}}\times (X^n)^{\GA_{\be}}  \qedhere $$
\end{proof}

\begin{lem}   The composites $\ul{\bR}_{\star}\com \bP\com \bQ$ and  $\bS_{\star} \com \bR$ can be identified.
\end{lem}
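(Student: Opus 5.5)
We reduce to \autoref{dumbPeter}, which already identifies the two composites at level $G/G$, that is, as functors $\LG\rtarr\sT$. Both $\ul{\bR}_{\star}\com\bP\com\bQ$ and $\bS_{\star}\com\bR$ are assembled from their $G$-level pieces by the same recipe, precomposition with the forgetful functors $\bI_H$ of \autoref{defn:Induce-from-H}. By \autoref{eq:22}, for a $G$-space $X$ and a $G$-set $T$ over $G/H$,
$$(\ul{\bR}_{G/H}\bP\bQ X)(T) = (\bP\bQ X)(\bI_H T)^H,$$
and by \autoref{secGH} we have $(\bS_{G/H}\bR X)(T) = (\bS_G\bR X)(\bI_H T)$. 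The plan has two steps. First, identify the objects and vertical functoriality at each level $G/H$. Second, check that the identifications are compatible with the horizontal transition maps $(-)_\ph$ for $\ph\colon G/K\rtarr G/H$. Together these give an isomorphism of covariant Grothendieck $\OG$-functors in $\LG_{\star}[\sT]$, in the sense of \autoref{defn:fiberOversG}.

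For the first step, the left side uses $H$-fixed points of $\bP\bQ X(\bI_H T)$, not $G$-fixed points, so \autoref{dumbPeter} does not apply verbatim. We instead redo its argument with $H$ in place of $G$. Since $\bI_H T = G\times_H \mathrm{fib}(T)$ by \autoref{prop:n-delta}, decompose $\mathrm{fib}(T)$ into $H$-orbits $H/K_i$ with $K_i\subset H$, as in \autoref{sense1}; then $\bI_H T=\amalg_i G/K_i$. Both sides convert disjoint unions into products, by the additivity computation at the end of \autoref{dumbPeter} and by \autoref{Sprod}, so it suffices to treat $\bI_HT = G/K$ with $K\subset H$. On the left we get $((X^{t_{g/k}})^{\al_{G/K}})^H = F_H(G/K,X)$ by \autoref{standard}, and a $G$-map out of $G/K$ is determined by its value at $eK$. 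The cleanest route is to note directly that $(\bP\bQ X)(\bI_HT)^H \iso F_H(\bI_HT,X)$ and that the space of $G$-sections $\mathrm{Sec}_G(\bI_HT,\amalg\, X^{G_i})$ is identified with $F_G(\bI_HT,X)$, sending a section $\si$ to $s\mapsto\si(s)\in X^{G_s}\subset X$.

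There is a real issue here: $H$-maps versus $G$-maps out of $\bI_H T$ differ. I expect the intended identification is that the correct fixed points in \autoref{eq:22} amount to $G$-fixed points of the induced object, i.e.\ $F_H(\mathrm{fib}(T),X)\iso F_G(G\times_H\mathrm{fib}(T),X)$. I would make this precise using the unit of $(\mathrm{ind},\mathrm{fib})$: the $H$-fixed points of $(\bP\bQ X)$ on the fiber correspond to $G$-maps on $\bI_HT$. I would then verify that both sides equal $F_H(\mathrm{fib}(T),X)$ naturally in $G$-injections over $G/H$. This is the step I expect to need the most care: pinning down which fixed-point convention makes the two formulas agree, and checking naturality with respect to the orderings of \autoref{Orderings}.

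For the second step, given $\ph\colon G/K\rtarr G/H$, both transition maps are induced by restriction along the pullback map $\ph^*T\rtarr T$ of \autoref{reverseO}. Under the identification with equivariant maps $T\rtarr X$ (respectively $\ph^*T\rtarr X$), both become precomposition with $\ph^*T\rtarr T$. On the fixed-point side this follows from contravariant functoriality of $\bP\bQ X$ on $\ul{G\LA}$ together with the inclusion of fixed points; on the section side it is the definition in \autoref{SstarDefn}. Compatibility with composites of $\ph$'s is then automatic, since both sides are honest restriction maps, which completes the identification.
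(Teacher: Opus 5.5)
Your plan, with the convention fixed as you propose, is the paper's argument. The paper reads $(\ul{\bR}_{G/H}\bP\bQ X)(T)$ as $(X^n)^{\Gamma_{\alpha}}$ with $\mathbf{n}^{\alpha}=\mathrm{fib}(T)$, i.e.\ $\mathrm{Map}_H(\mathrm{fib}(T),X)$, identifies this with $\mathrm{Map}_G(\bI_HT,X)\cong(\ul{\bR}_G\bP\bQ X)(\bI_HT)$, and then applies \autoref{dumbPeter} and \autoref{secGH}, dismissing horizontal compatibility as ``similar''. The discrepancy you flag is genuine: read literally, \autoref{eq:22} gives $\mathrm{Map}_H(\bI_HT,X)$, which is already wrong for $H=e$. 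The paper's proof silently adopts the same fiber convention you do.
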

\begin{proof}
We first observe that $\bR_{G/G} \bP  \bQ \iso \bS_{G/G}  \bR$, since that is just a restatement of \autoref{dumbPeter}.
Next, we show that, as functors $\LG_{G/H} \rtarr \sT,$
\begin{equation}
\label{eq:commute-at-H}
(\bR_{G/H}\bP\bQ)( X)\cong (\bS_{G/H}\bR)( X).
\end{equation}
By \autoref{secGH}, $\bS_{G/H}\sX = \bS_G\sX \circ \bI_H$.  In general, $\bR_{G/H}\sY \neq \bR_G\sY \circ \bI_H$, but we claim that equality up to canonical isomorphism does hold when $\sY = \bP\bQ X$:
\begin{equation}
  \label{eq:commute-at-H-lemma2}
(\bR_{G/H}\bP\bQ)( X) \cong (\bR_{G}\bP\bQ)( X) \circ \bI_H.
\end{equation}
Combined with $\bR_G\com \bP\com \bQ \cong \bS_G \com\bR$ and
\autoref{secGH} with $\sX = \bQ X$, this proves \autoref{eq:commute-at-H}.
The claim \autoref{eq:commute-at-H-lemma2} follows from the fact that, for a $G$-set
$T = (\mathbf{tn})^{\be}$ over $G/H$ with fiber over $eH$ the $H$-set
$\mathbf{n}^{\al}$, we have by \autoref{standard} and inspection that
\begin{equation*}
  (\bR_{G/H}\bP\bQ)(X)(T)  = (\bQ X)(n)^{\Gamma_{\al}} = (X^n)^{\Gamma_{\al}},
\end{equation*}
\begin{equation*}
  (\bR_{G}\bQ)(X) \circ \bI_H) (T)
                        = (\bR_{G} \bQ)( X)((\mathbf{tn})^{\be}) =
                         (X^{tn})^{\Gamma_{\be}},
\end{equation*}
and
\begin{equation*}
 (X^{tn})^{\Gamma_{\be}} \cong \mathrm{Map}_G((\mathbf{tn})^{\be}, X)
  \cong \mathrm{Map}_H(\mathbf{n}^{\al}, X) \cong (X^n)^{\Gamma_{\al}}.
\end{equation*}

The proof of compatibility of \autoref{eq:commute-at-H} across fibers, that is with respect to the natural transformations of \autoref{defn:fiberOversG}(ii), is similar. 
\end{proof}

\section{The functor $\Cpre$ and the proof that $\bR\com \bC\iso \Cp\com \bR$}\label{RvsR}

\subsection{A key diagram} \label{sec:construction-cpre}  
Now return to our outline in \autoref{CW1}.  Let $\NewC$ be a $G$-functor $\ul{G\LA}^{op}\rtarr \ul{G\sU}$ such that $\NewC(0 )= \{\ast\}$.  We may assume without loss of generality that $\NewC$ is the prolongation of its restriction $\sC\colon \LA^{op} \rtarr G\sU$, which in practice is the underlying functor of a (reduced) operad $\sC$ in $G\sU$.
We then have the functor  $\hat{\bC}\colon \LA[G\sT] \rtarr G\sT$, and we define $\hat{\bC}_G \colon \ul{G\LA}[\ul{G\sT}] \rtarr G\sT$ by 
$$\hat{\bC}_G \sZ = \NewC\otimes_{\ul{G\LA}} \sZ.$$
   For $\sY \colon G\LA_{\star} \rtarr \sT$, we define
\begin{equation}\label{Cpredefns} 
\sCpre_{\star} = \ul{\bR}_{\star}\NewC = \ul{\bR}_{\star}\bP\sC \ \ \text{and} \ \  \Cpre \sY = \sCpre_{\star} \otimes_{G\LA_{\star}} \sY.
\end{equation} 
Remember that this is constructed orbitwise as
\begin{equation}
  \label{def:Cpre_action2}
  \CpreGH\sY = \sC^{pre}_{G/H}\otimes_{\LG_{G/H}} \sY_{G/H}.
\end{equation} 
This gives the functor $\Cpre$ in the following extension of  \autoref{eq:covariant2}.  
\begin{equation}\label{eq:contra} 
\xymatrix{
G\sT \ar[r]^-{\bQ} \ar[d]_{\bR}& \LA[G\sT] \ar[r]^-{\bP} \ar[dr]_{\ul{\bR}_{\star}\bP}
\ar@/^2pc/ [rr]^-{\hat{\bC}}
& \ul{G\LA} [\ul{G\sT}] \ar[r]^-{\hat{\bC}_G} \ar[d]^{\ul{\bR}_{\star}} & G\sT \ar[d]^{\bR} \\
\OGop \ar[rr]_-{\bS_{\star}} & &  G\LA_{\star}[\sT] \ar[r]_-{\Cpre}  & \OGop \\}
\end{equation}
We claim that the diagram commutes up to natural isomorphism.  The composites in the rows are the functors denoted $\bC$ and $\Cp$ in the diagram of \autoref{prop:phi-C-commute}.  Therefore the claim will complete the proof of that result. The left trapezoid is from \autoref{eq:covariant2} and is irrelevant to our work in this subsection; the tautological triangle is all we need.  After the next result, we will focus on the square at the right. 


The following identification in \autoref{eq:contra} came as a pleasant surprise.

\begin{prop}\label{cuter}  The functor $\hat{\bC}$ can be identified with the composite $\hat{\bC}_G\com \bP$. 
\end{prop}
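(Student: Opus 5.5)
We must show that for $\sY\in\LA[G\sT]$ there is a natural isomorphism $\sC\otimes_{\LA}\sY\iso\NewC\otimes_{\ul{G\LA}}\bP\sY$, where $\NewC=\bP\sC$ is the contravariant prolongation of \autoref{contraP}. The approach is purely formal: it is a Fubini/co-Yoneda argument for enriched categorical tensor products, using the formula \autoref{PUtwo} for $\bP$ as a coend. No equivariant topology beyond $G$-equivariance of the structure maps should be needed.

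First, write $\bP\sY=\ul{G\LA}(-,?)\otimes_{\LA}\sY$, so that $(\bP\sY)(\bn^{\al})=\ul{G\LA}(-,\bn^{\al})\otimes_{\LA}\sY$. Then, by associativity of iterated coends (interchange of colimits, which holds in $G\sT$ since all functors involved are left adjoints in each variable), we get
\[
\NewC\otimes_{\ul{G\LA}}\bP\sY\iso \big(\NewC\otimes_{\ul{G\LA}}\ul{G\LA}(-,?)\big)\otimes_{\LA}\sY.
\]
The enriched co-Yoneda lemma identifies $\NewC\otimes_{\ul{G\LA}}\ul{G\LA}(-,\bn)$ with $\NewC(\bn)$ for $\bn$ in $\LA\subset G\LA$, naturally in $\bn\in\LA$. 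Since $\bU\NewC=\bU\bP\sC=\sC$ (the unit of the adjunction in \autoref{contraP} is the identity, because $\bU$ is full and faithful), the right-hand side is $\sC\otimes_{\LA}\sY=\hat{\bC}\sY$. Naturality in $\sY$ is clear from the construction.

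The main obstacle is to be careful about which coends are taken in the enriched sense and which $G$-actions are involved. The tensor product $\otimes_{\ul{G\LA}}$ is a coend over a $G$-category with $G$ acting by conjugation on hom sets, and the result must be a $G$-space with the diagonal action. So I would check explicitly on the coequalizer presentations, as in \autoref{newC2}, that the co-Yoneda maps $\NewC(\bn^{\al})\times\ul{G\LA}(\bm,\bn^{\al})\to\NewC(\bm)$ are $G$-equivariant.

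As a sanity check, I would also verify the isomorphism directly with a map in each direction. The inclusion $\LA\subset\ul{G\LA}$ induces $\sC\otimes_{\LA}\sY\to\NewC\otimes_{\ul{G\LA}}\bP\sY$. Conversely, a point $(c,y)\in\NewC(\bn^{\al})\times\sY(\bn)^{\al}$ is sent to the class of $(c,y)$ with $c\in\sC(\bn)$; this is well defined because the identity function $\bn\to\bn^{\al}$ is a morphism of $\ul{G\LA}$, the same element used in the proof of \autoref{PUone}. This makes visible that every element is equivalent to one indexed by a $G$-trivial set, which is the content of the proposition.
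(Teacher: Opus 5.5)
Your proof is correct, but your main argument takes a different route from the paper's. The paper works directly with the coequalizer presentation of $\hat{\bC}_G\bP\sY$. It looks at the domain component $\NewC(\bn^{\al})\times\LA_G(\bn^{\epz},\bn^{\al})\times\bP\sY(\bn^{\epz})$ and uses the identity function $\bn^{\epz}\rtarr\bn^{\al}$ to see that each summand $\NewC(\bn^{\al})\times\bP\sY(\bn^{\al})$ is identified with the one indexed by the $G$-trivial set $\bn^{\epz}$. It then observes that the latter summand is $\sC(n)\times\sY(n)$. In other words, the paper's proof is your ``sanity check'' without the explicit inverse.

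Your Fubini/co-Yoneda computation instead proves the general fact that tensoring a $G$-functor with the left Kan extension $\bP\sY$ over $\ul{G\LA}$ is the same as tensoring its restriction $\bU\NewC$ with $\sY$ over $\LA$. This buys generality: beyond the coend formula \autoref{PUtwo}, it uses nothing about $\NewC$ being a prolongation or about the combinatorics of $\ul{G\LA}$. It also handles automatically the injectivity half, namely that restricting to $G$-trivial sets introduces no identifications beyond those of $\otimes_{\LA}$. The paper's short argument only shows that every summand is hit and does not spell this out; your explicit inverse map supplies it. The paper's hands-on version, for its part, isolates the one morphism doing the work. That is the mechanism it then adapts, with the fiber inclusion $S\rtarr\mathrm{res}(T)$ in place of an identity, to prove \autoref{cuter2}.

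There are a few small slips, none affecting validity.
\begin{itemize}
\item The co-Yoneda step should read $\NewC\otimes_{\ul{G\LA}}\ul{G\LA}(\bn,?)\iso\NewC(\bn)$, with $\bn$ in the source slot, as in your display.
\item In the sanity check, well-definedness of the inverse map does not come from the identity function $\bn\rtarr\bn^{\al}$. It comes from the fact that $\NewC$ and $\bP\sY$ act on every based injection through $\sC$ and $\sY$ on underlying spaces, so $\ul{G\LA}$-relations map to $\LA$-relations. The identity function is what shows that both composites are identities.
\item $\bU\bP\sC=\sC$ is simply the formula $\NewC(\bn^{\al})=\sC(n)^{\al}$ at trivial $\al$. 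Full faithfulness of $\bU$ controls the counit, not the unit.
\end{itemize}
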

\begin{proof}
We must show that, when $\sY = \bP \sX$ for some $\sX\in \LA[G\sT]$, the restriction of the coequalizer 
defining $\hat{\bC}_G \sY$ to trivial $G$-sets $\bn =\bn^{\epz_n}$ gives an identification with $\hat{\bC}\sX$.  The coequalizer in general is 
{\small{
$$ 
\xymatrix@1{
\coprod_{\bm^{\al},\bn^{\be}} \hat\sC_G(\bn^{\be})\times \LA_G(\bm^{\al},\bn^{\be})\times \sY(\bm^{\al}) 
\ar@<.7ex>[r]  \ar@<-.7ex>[r] & \coprod_{\bp^{\ga}} \hat\sC_G(\bp^{\ga}) \times \sY(\bp^{\ga})\ar[r] & \hat\bC_G(\sY).  \\}  
$$
}}
Restricting to a single relevant domain component, we see the pair of maps
$$ \xymatrix@1{
\hat\sC_G(\bn^{\al}) \times \LA_G(\bn^{\epz},\bn^{\al})\times \sY(\bn^{\epz}) \ar@<.7ex>[r]  \ar@<-.7ex>[r]   & \hat\sC_G(\bn^{\epz})\times \sY(\bn^{\epz})  \coprod \hat\sC_G(\bn^{\al})\times \sY(\bn^{\al}). \\}
$$
Taking the identity map of $\bn$ as an element of $\LA_G(\bn^{\epz},\bn^{\al})$, this tells us that the images of $\hat\sC_G(\bn^{\al})\times \sY(\bn^{\al})$ and $\hat\sC_G(\bn^{\epz})\times \sY(\bn^{\epz})$ are identified in $\hat\bC_G \sY$.  But when $\sY=\bP \sX$ 
$$\hat\sC_G(\bn^{\epz})\times \sY(\bn^{\epz}) =\sC(n) \times \sX(n).  \qedhere$$
\end{proof}

An analogous proof addresses a question about the right square.  The definition of $\ul{\bR}_{G/H}$ in  \autoref{eq:33}  implicitly uses all $H$-sets $\bn^{\al}$, whereas $\bR$ sees only the restriction to $H$ of $G$-sets $\bn^{\al}$.  We illuminate the distinction with the following definition, in which we implicitly compare the cases $G/G$ and
$G/H$ of \autoref{defn:Induce-from-H}  and \autoref{eq:33}. 

\begin{defn} \label{CpreDefn}   For $\sC\colon G\LA^{op} \rtarr \sT$ and $H\subset G$, define $(\NewC)^H\colon H\LA^{op} \rtarr \sT$ by 
\begin{equation}\label{CGthree1}
(\NewC)^H(\bn^{\al}) = (\sC(n)^{\al})^H
\end{equation}
for $\al\colon H\rtarr \SI_n$.  
Define restriction 
$$\mathrm{res}\colon (\NewC)^H\rtarr  \sCpreGH $$ 
by restricting to those $\al\colon H\rtarr \SI_n$ that are restrictions to $H$ of some $\al\colon G\rtarr \SI_n$. Focusing on $H\LA \iso G\LA_{G/H}$, define
\begin{equation}\label{CGthree2}
\hat{\bC}_G^H\sY = (\NewC)^H\otimes_{H\LA}\sY_{H}.
\end{equation}
Here $\sY\in G\LA_{\star}[\sT]$ and $\sY_{H}$ denotes its component $\sY_{G/H}$, viewed as a functor $H\LA \rtarr \sT$ via the cited isomorphism.
\end{defn}

With these notations, the following identification is another pleasant surprise.

\begin{prop}\label{cuter2} Restriction induces an identification of  $\hat{\bC}_G^H\sY$ with $\CpreGH\sY$.
\end{prop}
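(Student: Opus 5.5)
The plan is to mimic the proof of \autoref{cuter}. Both $\hat{\bC}_G^H\sY$ and $\CpreGH\sY$ are coequalizers of the form \autoref{newC2}. Transport everything along the equivalence $H\LA\iso \LG_{G/H}$ of \autoref{prop:n-delta}, in its ordered form from \autoref{Orderings}. Then both coequalizers are indexed by ordered $H$-sets $\bn^{\al}$, $\al\colon H\rtarr \SI_n$.

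The difference between them is this. The coefficient functor of $\CpreGH\sY$ is $\sCpreGH(T)=\NewC(\bI_H T)^H$, and by \autoref{Cpredefns} it is defined from $G$-sets $\bI_H T = G\times_H \bn^{\al}$. The coefficient functor of $\hat{\bC}_G^H\sY$ uses $(\NewC)^H(\bn^{\al})=(\sC(n)^{\al})^H$ for arbitrary $\al$. The map $\mathrm{res}$ compares them on those $\al$ that come from $G$.

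The first step is to make $\mathrm{res}$ a map of contravariant functors on $H\LA$ that is compatible with the action maps. Then it induces a map of coequalizers
$$\CpreGH\sY \rtarr \hat{\bC}_G^H\sY.$$
It may be cleaner to build this map in the other direction; I would decide which after writing out the first step. I would identify $\NewC(\bI_HT)^H$ with $(\sC(n)^{\al})^H$ using \autoref{standard} and the same Shapiro-type isomorphism $\mathrm{Map}_G(G\times_H \bn^{\al},-)\iso \mathrm{Map}_H(\bn^{\al},-)$ that was used in the proof of \autoref{eq:commute-at-H-lemma2}. That is, fixed points of $\sC(tn)^{\be}$ under $G$-twisting correspond to fixed points of $\sC(n)^{\al}$ under $H$-twisting. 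I expect this step to be the main technical point, because the isomorphism for $\sC$, which is contravariant in injections, is less formal than it is for powers $X^n$. Indeed $\sC(tn)$ is not $\sC(n)^t$.

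The second step is surjectivity and injectivity of the induced map. Every $\al\colon H\rtarr\SI_n$ is related to the trivial action: the identity of $\bn$ is a (non-equivariant) element of $\LA_H(\bn^{\epz},\bn^{\al})$, and the $H$-fixed points of the hom $H$-sets carry the relevant equivariant information. Arguing exactly as in the proof of \autoref{cuter}, consider the component of the coequalizer diagram indexed by the pair $(\bn^{\epz},\bn^{\al})$ in the enriched tensor product. It shows that the image of the $\al$-summand $(\NewC)^H(\bn^{\al})\times \sY_H(\bn^{\al})$ is identified with the image of a summand indexed by a $G$-extendable action. The cleanest such action is the restriction of a $G$-action, for instance the trivial one or $\bI_H$ of it. Hence every element of $\hat{\bC}_G^H\sY$ is represented in the restricted diagram, which gives surjectivity. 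The relations among extendable indices are exactly those of $\CpreGH$, since $\mathrm{res}$ is a full restriction of indexing; this gives injectivity.

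The obstacle I expect here is making precise that the relations needed to pass from an arbitrary $\al$ to an extendable one lie in the enriched part of the tensor product, namely the hom $H$-spaces $\LA_H$ with conjugation action. If the indexing were only by equivariant injections $H\LA$, the identity would not be a morphism. So I would first check that the tensor products in \autoref{CGthree2} and \autoref{def:Cpre_action2} are to be read in the enriched sense, and then verify naturality in $G/H$.
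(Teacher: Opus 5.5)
Your proposal has a genuine gap, and it is missing the one idea the paper's proof rests on.

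\textbf{The Shapiro-type identification.} You propose to identify $\sCpreGH(T)=\NewC(\bI_HT)^H$ with $(\sC(n)^{\al})^H$ by a Shapiro-type isomorphism. That isomorphism is not available. In the proof of \autoref{eq:commute-at-H-lemma2} it works because $X^{tn}$ is the mapping space $\mathrm{Map}(\mathbf{tn},X)$, and $\sC(tn)$ is not of that form. It is also not what is needed. Unwinding \autoref{eq:33} with \autoref{standard} gives
$$\NewC(\bI_HT)^H=\sC(tn)^{\GA_{\be|_H}}=(\NewC)^H(\mathrm{res}\,T).$$
So the coefficient of $\CpreGH$ at $T$ is $(\NewC)^H$ evaluated on the $tn$-element $H$-set $\mathrm{res}(T)$, not on the $n$-element fiber $\mathrm{fib}(T)$. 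This is a tautology. The content of the proposition lies in relating the fiber $S$ to $\mathrm{res}(T)$ inside the coequalizer.

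\textbf{The mechanism for that relation.} You use the identity of $\bn$ as a non-equivariant element of $\LA_H(\bn^{\epz},\bn^{\al})$. No such relation exists here.
\begin{itemize}
\item The tensor products in \autoref{CGthree2} and \autoref{def:Cpre_action2} are over $H\LA$ and $\LG_{G/H}$, whose morphisms are equivariant injections.
\item They cannot be read in the enriched sense. $(\NewC)^H$ and $\sCpreGH$ are fixed-point functors, and a non-equivariant injection does not carry $\GA_{\al}$-fixed points of $\sC(n)$ to $H$-fixed points. In \autoref{cuter} the enrichment was harmless because no fixed points had yet been taken.
\item You also cannot reach a trivial $H$-set equivariantly: there is no $H$-injection $\bn^{\al}\rtarr\bm^{\epz}$ unless $\al$ is trivial.
\end{itemize}

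\textbf{What the paper does instead.} It uses the $H$-equivariant inclusion of the fiber over $eH$,
$$\io\colon S\rtarr \mathrm{res}(G\times_H S).$$
This is a genuine morphism of $H\LA$, and its target is the restriction of a $G$-set. The coequalizer relation along $\io$ moves the $S$-summand of $\hat{\bC}_G^H\sY$ into the $\mathrm{res}(T)$-summand. By the tautology above, that summand's coefficient is $\sCpreGH(T)$.

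\textbf{Two further cautions.} Unlike the identity used in \autoref{cuter}, $\io$ is not invertible. So the relation only identifies $(\io^*c',y)$ with $(c',\io_*y)$, and you must still account for lifting coefficients along $\io^*$. Likewise, your injectivity claim needs more than fullness of the restricted indexing, since chains of relations can pass through non-extendable $H$-sets.
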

\begin{proof}
The proof is analogous to that of \autoref{cuter}. For an $H$-set $S = \bn^{\al}$, let 
$T=(\mathbf{t_{g/h}n})^{\be}$ be the $G$-set $G_+\sma_H S$ and let $\io\colon S\rtarr \mathrm{res}(T)$ be the $H$-injection of its fiber.  Restricting to a relevant domain component in the coequalizer diagram that defines $\hat{\bC}_G^H\sY$, we see the pair of maps
$$ \xymatrix{
\big((\NewC)^H(\mathrm{res}(T)) \times H\LA(S,\mathrm{res}(T))\times \sY(S) \ar@<.7ex>[d]  \ar@<-.7ex>[d] \\  
\big((\NewC)^H(S)\times \sY(S)\big)  \coprod 
\big({\sC}_G^H(\mathrm{res}(T))\times \sY(\mathrm{res}(T))\big). \\}
$$
Taking $\io$ in $H\LA(S,\mathrm{res}T)$, we see that the image of 
$(\NewC)^H(S)\times \sY(S)$ in $\hat{C}_{H}^{G}\sY$ is identified with the image of 
$(\NewC)^H(\mathrm{res}(T))\times \sY(\mathrm{res}(T))$.  Since  $(\NewC)^H(\mathrm{res}(T))$ is $\sCpreGH(T)$, this says that all of $\hat{\bC}_G^H\sY$
is in the image of $\Cpre_{G/H}\sY$. 
\end{proof}

More substantially, the right square of \autoref{eq:contra} encodes a commutation of coequalizers with passage to fixed points that is not at all obvious a priori.  We address this using a bit of equivariant bundle theory that is recalled in the following subsection.

\begin{defn} We say that $\sC$ is $\SI$-free if each $\sC(n)$ is a free $\SI_n$-space.
\end{defn}

\begin{prop}\label{RCdiagram} If $\sC$ is $\SI$-free, then the right square in the diagram \autoref{eq:contra} commutes up to natural isomorphism.
\end{prop}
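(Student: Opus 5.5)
We must show that for $\sZ\in \ul{G\LA}[\ul{G\sT}]$ there is a natural isomorphism $\bR\hat{\bC}_G\sZ \iso \Cpre\,\ul{\bR}_{\star}\sZ$ of orbital presheaves. That is, for each $H$,
$$(\NewC\otimes_{\ul{G\LA}}\sZ)^H \iso \sCpreGH\otimes_{\LG_{G/H}} (\ul{\bR}_{G/H}\sZ),$$
compatibly with the maps $\ph^*$ induced by $G$-maps $\ph\colon G/K\rtarr G/H$.

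By \autoref{cuter} and the fact that $(\bP,\bU)$ is an equivalence (\autoref{PUone}), we may take $\sZ=\bP\sY$ with $\sY\in\LA[G\sT]$. Then $\hat{\bC}_G\sZ\iso \sC\otimes_{\LA}\sY$. By \autoref{cuter2}, we may also replace the right side by $\hat{\bC}^H_G\,\ul{\bR}_{G/H}\sZ$, which is a tensor product over $H\LA$ built from $(\sC(n)^{\al})^H$ and $(\sY(n)^{\al})^H$ for all $\al\colon H\rtarr\SI_n$. This reduces the problem to the case $H=G$, since restricting the $G$-action to $H$ commutes with everything in sight. So it suffices to compare $(\sC\otimes_{\LA}\sY)^G$ with $\coprod_{n,\al}(\sC(n)^{\al})^G\times(\sY(n)^{\al})^G$ modulo the relations coming from $G\LA$.

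The comparison proceeds in two steps, using the standard filtration of $\sC\otimes_{\LA}\sY$ by $n$. As in the classical analysis of $\bC X$, the quotient is built by pushouts. The $n$-th stage adjoins $\sC(n)\times_{\SI_n}\sY(n)$ along the image of the degenerate part, namely points where some coordinate lies in the image of a basepoint-inclusion map. Passage to $G$-fixed points commutes with these pushouts, since they are along closed $G$-cofibrations and fixed points preserve such pushouts and colimits of sequences of inclusions. Hence everything reduces to identifying the fixed points of each filtration quotient $\sC(n)\times_{\SI_n}\sY(n)$, relative to its degenerate subspace, with the corresponding stage on the orbital side. Here the equivariant bundle theory enters. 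Since $\sC(n)$ is $\SI_n$-free, the $G\times\SI_n$-space $\sC(n)$ is a universal-type $(G;\SI_n)$-bundle total space. So a $G$-fixed point of $\sC(n)\times_{\SI_n}\sY(n)$ lifts to a point $(c,y)$ whose isotropy subgroup in $G\times\SI_n$ meets $\SI_n$ trivially. That isotropy group is therefore a graph $\GA_\al$ for some $\al\colon G\rtarr\SI_n$, and the lift lies in $\sC(n)^{\GA_\al}\times \sY(n)^{\GA_\al}=(\sC(n)^\al)^G\times(\sY(n)^\al)^G$. By \autoref{standard}, this is exactly a summand of the coequalizer \autoref{newC2}.

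The resulting map
$$\coprod_\al (\sC(n)^\al)^G\times(\sY(n)^\al)^G \rtarr \big(\sC(n)\times_{\SI_n}\sY(n)\big)^G$$
is then surjective. Its fibers are the orbits under $\si\in\SI_n$ acting by $\al\mapsto \si\al\si^{-1}$, because freeness ensures the lift is unique up to $\SI_n$. Those identifications are exactly the ones imposed by the isomorphisms $\bn^\al\iso\bn^{\si\al\si^{-1}}$ in $G\LA$. The remaining relations, from non-bijective injections in $G\LA$, match the degenerate (basepoint-insertion) relations. This uses the product decomposition \autoref{Sprod} and the analogous decomposition for $\ul{\bR}_G\bP\sY$ established in \autoref{dumbPeter}.

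Continuity of the inverse follows because the map is a continuous bijection from a quotient that is, orbit by orbit, the quotient of a closed subspace by a finite group action, and fixed points of such free quotients are computed topologically by the same lifting. Naturality in $\sZ$ is clear. Compatibility with $\ph^*$ follows because all identifications are induced by restriction of group actions along $\ga_g$ and \autoref{phistar2}.

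I expect the main obstacle to be the careful bookkeeping that the degenerate relations on the two sides coincide. In particular, fixed points of the pushout along basepoint insertions must match the relations coming from $G$-injections $S\rtarr T$ that are not bijections, where $T\setminus S$ may be a nontrivial $G$-set. I would first try handling this by inducting on the filtration, using that $T=S\amalg(T\setminus S)$ and the product formula \autoref{Sprod}.
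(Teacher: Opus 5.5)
Your proposal follows the paper's proof: reduce to $\sZ=\bP\sY$ via \autoref{PUone} and \autoref{cuter}, compute the tensor product in two steps (automorphisms, then ordered injections), and handle the automorphism step by the $\SI$-freeness/graph-subgroup analysis underlying \autoref{prop:fiberbundle}, with your filtration argument merely making explicit the commutation of fixed points past the residual relation $(\sim)$ that the paper takes by inspection. One correction: \autoref{Sprod} and \autoref{dumbPeter} give product decompositions only when $\sY=\bQ X$ (and $\bS_{\star}$ does not occur in this square), so for general $\sY$ the injection step should instead rest on the fact that a $\GA_{\al}$-fixed degenerate point reduces along an $\al$-invariant subset, i.e.\ along a morphism of $G\LA$, subject to the usual cofibrancy and uniqueness-of-nondegenerate-representative caveats.
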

\begin{proof} By the universal property of the domain coequalizer, for each $H\subset G$, we have a natural map 
$$ \om\colon (\Cpre\ul{\bR}_{\star})(\sY)(G/H) \rtarr (\bR\hat{\bC}_G)(\sY)(G/H). $$
We claim that $\om$ is an isomorphism for each $H$, proving the result.  We first observe that it suffices to prove that $\om$ is an isomorphism after precomposition with $\bP$.  Indeed, by \autoref{PUone}, we then have
$$ \bR\hat{\bC}_G\sY \iso \bR\hat{\bC}_G\bP\bU\sY \iso \Cpre\bR_{\star}\bP\bU\sY \iso \Cpre\bR_{\star}\sY$$
for $\sY\in \ul{G\LA}[\ul{G\sT}]$.  That allows us to use \autoref{cuter} to replace $\hat {\bC}_G\com \bP$ with $\hat{\bC}$.  Thus we replace $\sY$ by $\bP\sX$, where $\sX \in \LA[G\sT]$. 

Morphisms in $H\LA$ are compositions of automorphisms and ordered proper inclusions of $H$-sets.  As is standard for $\otimes_{\LA}$, $\otimes_{H\LA}$ can be computed in two steps.  We first use automorphisms to pass to orbits, and then use the equivalence relation ($\sim$) obtained using the ordered injections.  For the composite $\bR\hat {\bC}$, we carry out the first step in \autoref{prop:fiberbundle} below, which implies that
\begin{equation}\label{ThisHelps}
(\bR\hat{\bC}\sX)(G/H) = \coprod_{n\geq 0}  \coprod_{[ \al] \in \mathrm{Rep}(H, \SI_n)}\sC(n)^{\Gamma_\al} \times_{\SI_{\bn^{\al}}} \sX(n)^{\GA_{\al}}/(\sim).
\end{equation}
In effect, this already commutes coequalizers past fixed points. By inspection, the definition of a coequalizer together with the definitions of $\bP$, $\bR_{\star}$, and $\sCpre$ show that, up to notation, $(\Cpre\bR_{\star}\bP\sX)(G/H)$ admits the same description.
\end{proof}

\subsection{A bit of equivariant bundle theory}\label{GBUND}
We here prove a basic relationship between passage to orbits and passage to fixed points.  Fix $G$ and $n\geq1$.  Let $C$ be a $\SI_n$-free ($G\times \SI_n$)-space and $Y$ be a 
$(G\times \SI_n)$-space.  We give a description of $(C\times_{\SI_n} Y)^H$ for $H\subset G$ that ties in to our construction of $\Cp$. Write
 \begin{equation*}
   \mathrm{Rep}(G, \Sigma_n) =\{\mathrm{homomorphisms ~~} G \rtarr
\Sigma_n\}/\Sigma_n\text{-conjugation}.
\end{equation*}
We have three sets in bijective correspondence:
\begin{equation*}
  \begin{array}{c|c}
    \text{ Elements } & \text{ Sets }  \\ \hline
    \alpha &\{\mathrm{homomorphisms ~~} G \to \Sigma_n\} \\
    \bn^{\alpha} & \{G\text{-sets of order $n$} \}\\
    \Gamma_{\alpha} & \{\text{ subgroups } \Gamma \subset G \times \Sigma_n \text{ such that }
    \Gamma \cap \SI_n = e\}
  \end{array}
\end{equation*}

Using the notations of Conventions \ref{finiteGset} and \ref{finiteGset2}, we sketch the proof of the following bundle-theoretic result.

\begin{prop}
\label{prop:fiberbundle} For $H\subset G$ and for a $\SI_n$-free $(G\times \SI_n$)-space $C$ (with $G$ acting on the left and $\SI_n$ acting on the right) and a (left)  $(G\times \SI_n$)-space $X$,
\begin{equation*}
 (C \times_{\Sigma_n} X)^H \iso
 \coprod_{[ \al] \in \mathrm{Rep}(H, \SI_n)}C^{\Gamma_\al} \times_{\SI_{\bn^{\al}}} X^{\GA_{\al}}.
\end{equation*}
\end{prop}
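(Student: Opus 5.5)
The plan is to analyze an $H$-fixed point of the orbit space directly, using $\SI_n$-freeness of $C$ to extract a homomorphism $H\rtarr \SI_n$ from each fixed orbit. Let $[c,x]\in (C\times_{\SI_n}X)^H$. For each $h\in H$, $h[c,x]=[hc,hx]=[c,x]$, so there is $\si\in\SI_n$ with $hc = c\si^{-1}$ (writing the right action appropriately) and $hx = \si x$. Since $C$ is $\SI_n$-free, $\si$ is uniquely determined by $h$ and $c$; call it $\al_c(h)$. Uniqueness gives that $\al_c\colon H\rtarr \SI_n$ is a homomorphism, and by construction $c\in C^{\GA_{\al_c}}$ and $x\in X^{\GA_{\al_c}}$, where we convert the right $\SI_n$-action on $C$ to a left action in the usual way so that $\GA_{\al}$ acts on both. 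Replacing $(c,x)$ by $(c\ta,\ta^{-1}x)$ replaces $\al_c$ by its conjugate $c_{\ta^{-1}}\com\al_c$. So the conjugacy class $[\al]\in \mathrm{Rep}(H,\SI_n)$ is an invariant of the point, and this decomposes $(C\times_{\SI_n}X)^H$ as a disjoint union indexed by $\mathrm{Rep}(H,\SI_n)$.

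Next, I fix a representative $\al$ in each class and define
$$\coprod_{[\al]} C^{\GA_\al}\times X^{\GA_\al}\rtarr (C\times_{\SI_n}X)^H,\quad (c,x)\mapsto [c,x].$$
This is well defined because a pair fixed by $\GA_\al$ gives an $H$-fixed orbit. By the previous paragraph it is surjective onto the $[\al]$-summand: after conjugating we may assume $\al_c=\al$ exactly. For injectivity modulo the correct group, suppose $[c,x]=[c',x']$ with both pairs $\GA_\al$-fixed. Then $c'=c\ta$, $x'=\ta^{-1}x$, and $\al_{c'}=\al_c$ forces $\ta$ to commute with $\al(h)$ for all $h$, that is, $\ta$ lies in the centralizer of $\al(H)$. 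This centralizer is precisely $\SI_{\bn^{\al}}$ of \autoref{finiteGset2}, the automorphism group of the $H$-set $\bn^\al$. Conversely, such $\ta$ preserve $C^{\GA_\al}$ and $X^{\GA_\al}$. This yields a continuous bijection from $C^{\GA_\al}\times_{\SI_{\bn^{\al}}}X^{\GA_\al}$ onto the $[\al]$-summand.

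The main obstacle is the topology: I must show the bijection is a homeomorphism and that the summands are open and closed. The approach is to use that $\SI_n$ is finite and $C$ is $\SI_n$-free, so $C\times X\rtarr C\times_{\SI_n}X$ is a covering map. I would show that for fixed $\al$ the set $(C\times X)^{\GA_\al}$ is closed, since these are Hausdorff spaces and the fixed set is an equalizer. I would then show that the union over the finitely many homomorphisms $\al$ of these sets is the full preimage of $(C\times_{\SI_n}X)^H$, and that distinct $\al$ give disjoint sets by freeness. Distinct $\al$ have disjoint closed pieces forming a finite cover, so each piece is clopen. Since the quotient map is a closed map for a finite group action, the restricted quotient map is a quotient map. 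Passing to orbits under $\SI_n$ regroups the pieces by conjugacy class, each class contributing the $\SI_n$-saturation $\SI_n\times_{\SI_{\bn^\al}}(C\times X)^{\GA_\al}$. Its orbit space is $C^{\GA_\al}\times_{\SI_{\bn^\al}}X^{\GA_\al}$, which gives the homeomorphism.
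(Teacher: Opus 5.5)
Your argument is correct, and it takes a genuinely different route from the paper. There is one convention slip to fix first.

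With $hc = c\sigma^{-1}$ and $hx = \sigma x$, commutativity of the actions gives $(h'h)c = (h'c)\sigma_h^{-1} = c\,\sigma_{h'}^{-1}\sigma_h^{-1}$. Freeness then yields $\sigma_{h'h} = \sigma_h\sigma_{h'}$, so $h\mapsto \sigma_h$ is an anti-homomorphism. With the usual conversion $\sigma\cdot c = c\sigma^{-1}$, the pair $(c,x)$ is fixed by the graph of $h\mapsto\sigma_h^{-1}$, not of $h\mapsto \sigma_h$. Define $\alpha_c(h)$ instead by $hc = c\,\alpha_c(h)$, so that $hx = \alpha_c(h)^{-1}x$. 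Then $\alpha_c$ is a homomorphism and $(c,x)\in (C\times X)^{\Gamma_{\alpha_c}}$. Your formula $\alpha_{c\tau} = c_{\tau^{-1}}\circ\alpha_c$ and everything after it go through unchanged.

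The paper does not argue pointwise. It applies a general result about principal $G$-$\Sigma_n$-bundles $p\colon E\to B$, a special case of Lashof--May, to $E = C\times X$. Its sketch assigns to each component of $B^H$ a class in $\mathrm{Rep}(H,\Sigma_n)$. It then uses a reduction of the structure group to $\Sigma_{\mathbf{n}^{\alpha}}$ to recognize $E^{\Gamma_\alpha}\to B^H_{[\alpha]}$ as a principal $\Sigma_{\mathbf{n}^{\alpha}}$-bundle.

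You replace that bundle theory with explicit steps:
\begin{itemize}
\item freeness produces the homomorphism;
\item the ambiguity in identifications is exactly the centralizer of $\alpha(H)$;
\item $q^{-1}(B^H)$ is covered by the finitely many pairwise disjoint closed sets $(C\times X)^{\Gamma_\alpha}$, hence is their topological disjoint union;
\item the restriction of the closed map $q$ is a quotient map.
\end{itemize}
This is self-contained and needs only freeness of a finite group action and closedness of fixed sets. It also proves directly the local constancy of the $\mathrm{Rep}$-invariant, which the paper's component-by-component sketch leaves to the cited references.

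What the paper's packaging buys is generality and reuse. The bundle statement applies to any principal $G$-$\Sigma_n$-bundle, and its source works for compact Lie structure groups. In that setting your finiteness steps (finitely many homomorphisms, a finite disjoint closed cover) are no longer available.
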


We see this by applying the following result about equivariant principal bundles to the principal  $G$-$\SI_n$-bundle $C\times X \rtarr C\times_{\SI_n}X$. The result is a special case of \cite[Theorem 12]{LM86} that is spelled out in \cite[Theorem 2.46 and Remark 2.47]{Zou}.

\begin{prop}
  \label{prop:bundle}Let 
$$ p\colon E\rtarr E/\SI_n = B$$
be a principal $G$-$\SI_n$-bundle.\footnote{Here and below, we tacitly assume that total spaces $E$ are completely regular.} Then for $H\subset G$
$$ B^H \iso \coprod_{[ \al] \in \mathrm{Rep}(H, \SI_n)} E^{\GA_{\al}}/\SI_{\bn^{\al}}.$$
\end{prop}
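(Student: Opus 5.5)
The plan is a pointwise analysis of $H$-fixed points in $B$ via fibers, followed by a local argument that upgrades the resulting bijection to a homeomorphism. Write the free $\SI_n$-action on $E$ on the right, commuting with the left $G$-action, so that $\GA_{\al}\subset G\times\SI_n$ acts on $E$ by $(h,\al(h))\cdot e = h e\,\al(h)^{-1}$, with the inverse fixing the left/right convention.

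\emph{Step 1: the fiber over a fixed point determines a class in $\mathrm{Rep}(H,\SI_n)$.} Let $b\in B^H$ and choose $e\in p^{-1}(b)$. The fiber $p^{-1}(b)$ is a free transitive $\SI_n$-orbit. For $h\in H$ we have $p(he)=hb=b$, so there is a unique $\al_e(h)\in\SI_n$ with $he=e\,\al_e(h)$.
\begin{itemize}
\item Uniqueness, together with the fact that the two actions commute, shows that $\al_e\colon H\rtarr\SI_n$ is a homomorphism.
\item By construction $e\in E^{\GA_{\al_e}}$.
\item Replacing $e$ by $e\si$ replaces $\al_e$ by its conjugate $\si^{-1}\al_e\si$.
\end{itemize}
Hence $b\mapsto[\al_e]$ is a well-defined function $B^H\rtarr\mathrm{Rep}(H,\SI_n)$. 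Conversely, $p$ carries each $E^{\GA_{\al}}$ into $B^H$.

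\emph{Step 2: the set-level bijection.} Fix a representative $\al$ in each conjugacy class and consider the map
$$\coprod_{[\al]}E^{\GA_{\al}}/\SI_{\bn^{\al}}\rtarr B^H$$
induced by $p$.
\begin{itemize}
\item It is well defined because the centralizer of $\al$ in $\SI_n$ is exactly the group $\SI_{\bn^{\al}}$ of $H$-automorphisms of $\bn^{\al}$ (\autoref{finiteGset2}), and this group preserves $E^{\GA_{\al}}$.
\item It is surjective by Step 1.
\item It is injective: if $e,e'\in E^{\GA_{\al}}$ lie over the same point, then $e'=e\si$, and comparing $\al_e=\al=\al_{e'}$ forces $\si$ to centralize $\al$.
\item Different classes $[\al]$ have disjoint images, again by Step 1.
\end{itemize}

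\emph{Step 3: topology (the main obstacle).} It remains to show that each piece $p(E^{\GA_{\al}})$ is open and closed in $B^H$ and that the continuous bijection is a homeomorphism. Since $\SI_n$ is finite and acts freely on the completely regular (hence Hausdorff) space $E$, the map $p$ is a covering map. For $e\in E^{\GA_{\al}}$, first choose an open $U\ni e$ whose translates $U\si$ are pairwise disjoint. Replace $U$ by the intersection of its finitely many $\GA_{\al}$-translates; this is still open, still contains $e$ because $e$ is $\GA_{\al}$-fixed, and its $\SI_n$-translates remain disjoint.

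Then $p|_U$ is a homeomorphism onto the open set $p(U)$, and it is equivariant when $H$ acts on $U$ through $\GA_{\al}$ and on $p(U)$ through $H$. Consequently $p(U)^H=p(U^{\GA_{\al}})$, and every point of $p(U)^H$ has class $[\al]$. This yields all three facts at once:
\begin{itemize}
\item each piece is open in $B^H$, and therefore also closed, being the complement of the union of the other pieces;
\item $E^{\GA_{\al}}\rtarr p(E^{\GA_{\al}})$ is a local homeomorphism;
\item its fibers are exactly the free $\SI_{\bn^{\al}}$-orbits, so it is the quotient by $\SI_{\bn^{\al}}$.
\end{itemize}
This gives the stated homeomorphism. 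Finally, \autoref{prop:fiberbundle} follows by applying this result to $C\times X\rtarr C\times_{\SI_n}X$, noting that $(C\times X)^{\GA_{\al}}=C^{\GA_{\al}}\times X^{\GA_{\al}}$.
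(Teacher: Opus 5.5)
Your proof is correct. It uses the same decomposition as the paper's sketch: sort the points of $B^H$ by the conjugacy class of the homomorphism $H\to\Sigma_n$ read off from the fiber, then identify each piece with $E^{\Gamma_\alpha}$ modulo the centralizer $\Sigma_{\mathbf{n}^{\alpha}}$. The execution, however, differs.

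The paper works componentwise. It defines $c\colon \pi_0(B^H)\to\mathrm{Rep}(H,\Sigma_n)$, asserting without argument that the class is constant on each component. It then obtains $E^{\Gamma_\alpha}\to B^H_{[\alpha]}$ as a reduction of the structure group of the restricted principal $\Sigma_n$-bundle to $\Sigma_{\mathbf{n}^{\alpha}}$, deferring the details to the more general results of Lashof--May and Zou.

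You instead work pointwise and use the finiteness of $\Sigma_n$ to produce evenly covered, $\Gamma_\alpha$-invariant neighborhoods. That single device does two jobs. It proves the local constancy of the class map that the definition of $c$ presupposes, so the pieces are clopen and you never need to discuss components. It also shows directly that $E^{\Gamma_\alpha}\to p(E^{\Gamma_\alpha})$ is a covering whose fibers are free $\Sigma_{\mathbf{n}^{\alpha}}$-orbits, which is exactly the content of the asserted reduction of structure group.

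Your route is self-contained and elementary but tied to finite structure groups. The bundle-theoretic phrasing is the one that extends to the more general setting of the cited sources. If you prefer not to rely on ``completely regular implies Hausdorff,'' local triviality of the principal $\Sigma_n$-bundle supplies the same evenly covered neighborhoods.
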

\begin{proof}[Sketch proof] Let$B_0$ be a component of $B^H$. There is some $\al\colon H \rtarr \SI_n$ such that $(p^{-1}(B_0))^{\GA_{\al}}$ is nonempty, and then $(p^{-1}(B_0))^{\GA_{\al'}}$ is nonempty if and only if $[\al'] = [\al]$ in $\mathrm{Rep}(H,\SI_n)$.  This gives a function
$$c: \pi_0(B^H)  \rtarr \mathrm{Rep}(H, \Sigma_n).$$
It need be neither injective nor surjective, and the summand indexed on 
$[\al]$ is empty if $[\al]$ is not in the image of $c$. For 
$[\alpha] \in \mathrm{Rep}(H, \Sigma_n)$, write
$B^H_{[\alpha]}$ for the $[\alpha]$-indexed component,
namely the pullback
\begin{equation*}
  \begin{tikzcd}
    B^H_{[\alpha]} \ar[r] \ar[d] &
    B^H \ar[d]\\
    {c^{-1}([\alpha])} \ar[r] & \pi_0(B^H)
  \end{tikzcd}
\end{equation*}
The principal $\SI_n$-bundle $p|_{B^H_{[\alpha]}}$ has a reduction of its structure group to $\SI_{\bn^{\al}}$, giving a principal $\SI_{\bn^{\al}}$-bundle 
\begin{equation*}
E^{\Gamma_\alpha} \rtarr B^H_{[\alpha]}.
\end{equation*}
Since $B^H$ is the disjoint union of the $B^H_{[\al]}$, the conclusion follows.
\end{proof}

\section{$G$-operads $\NewC$ and $\OG$-operads $\sC_{\star}$}\label{ContraIn}

\subsection{$G$-operads $\NewC$}\label{OperadCG} So far we have used operads only implicitly, as precursors of monads.
We will not repeat the full definition.  The following recollection focuses on the equivariant special case.

\begin{defn}\label{operad}  Recall that an operad $\sC$ in $G\sU$ consists of unbased $G$G-spaces $\sC(n)$, where $\sC(n)$ has a left action by $G$ and a right action by $\SI_n$ that commute with each other, together with a unit $G$-map $\id\colon  \ast \rtarr \sC(1)$ and structure 
$G$-maps  
$$\ga\colon \sC(k) \times \sC(j_1) \times \cdots \times \sC(j_i) \rtarr \sC(j),$$
where $j = j_1 + \cdots + j_k$, which are associative, unital, and equivariant as specified in \cite{MayGeo, MayOp1}.   We say that $\sC$ is reduced if $\sC(0) = \ast$, and we restrict attention to reduced operads henceforward. 
\end{defn}  

Motivated by a conceptual gap in the comparison between the equivariant operadic and Segalic infinite loop space machines, we defined a new kind of equivariant operad in \cite{KMZ1}, which we call a $G$-operad.  It only later became apparent that $G$-operads are actually more central to the orbital presheaf infinite loop space machine that is the subject of this paper. We therefore present more details here.  They may at first seem to have come out of nowhere, but they were dictated by the conceptual context of equivariant categories of operators presented in \cite{KMZ1}.\footnote{As a result, there is some duplication of material here and in \cite{KMZ1}.}

The idea of $G$-operads is to use  $G$-spaces $\NewC(\bn^{\al})$ rather than just the $G$-spaces $\sC(n)$ of an operad in $G\sU$. The details of the generalization are all about equivariance, and we shall use the details in \autoref{sec:equiv}.

\begin{defn}\label{composable} Consider tuples of finite $G$-sets 
$$(\bk^{\be};\bj_1^{\al_1}, \cdots, \bj_k^{\al_k})$$
We say that $(\be; \{\al_r\})$ is \em{composable} if  $\bj_r^{\al_r}  = \bj_s^{\al_s}$ whenever 
$\be(g)(r)= s$ for some $g\in G$, where $r,s\in \{1, \cdots, k\}$.  When this holds, define 
$$\ga(\be;\wed \al_r)\colon G\rtarr \SI_{j}$$ 
by letting 
$$\ga(\be;\wed \al_{r})(g)(i) = \al_s(g)(i) $$ 
when $\be(g)(r) = s$ for $1\leq r\leq k$; on the left, $i\in\{1, \cdots, k\}$ is in the $r$th ordered block of $j_r$ letters and, on the right, $i$ is in the $s$th ordered block of  $j_r =j_s$ letters.
\end{defn}

 
We can now give our definition of $G$-operads $\NewC$, using notations from the end of  \autoref{sec:equiv}.

\begin{defn}\label{crude}  A $G$-operad $\NewC$ in $G\sU$  consists of (unbased) $G$-spaces  $\sC(\bn^{\al})$ 
 for all $\al\colon G\rtarr \SI_n$.  We require
$\sC(\bn^{\al})$ to have a left action by $G$ and a right action by $\SI_n$ that together give a right action of the semi-direct product  
$\SI_n \rtimes_{\al_c} G$. We require a unit $G$-map $\id \colon \ast \rtarr \NewC(\mathbf{1})$ and structure 
$G$-maps  
$$\ga_G\colon \NewC(\bk^{\be}) \times \NewC(\bj_1^{\al_1} ) \times \cdots \times \NewC(\bj_k^{\al_k}) \rtarr 
\NewC(\bj^{\ga(\be; \wed \al_r)})$$
for composable $(\be,\{\al_r\}_{1\leq r\leq k})$.  We require the $\ga_G$ to be 
$$\SI_{k} \rtimes_{\be_c}  G \ \ \text{and} \ \  (\SI_{j_1} \times \cdots \times
  \SI_{j_k})\rtimes_{(\al_1,...,\al_k)_c} G$$ equivariant, where these groups act on the source of $\ga$ via their actions on its coordinates and where they act on the target via embeddings as subgroups of  $\SI_{j}$ via block permutations and permutations within blocks. The $\ga_G$ must be associative and unital for composable data, as specified in \cite[Definition 1]{MayOp1}.\footnote{In that definition, assuming the horizontal data are composable in diagram (a), one of the right vertical $\gamma$ is composable if and only if the other is.  Diagram (b) is always composable. In diagram (c), the left map is composable if and only if the right map is composable.} 
Again, we say that $\NewC$ is reduced if $\NewC(\mathbf{0}) = \ast$, and we restrict attention to reduced $G$-operads. 
\end{defn} 

\begin{lem}\label{LAtoLAG}  An operad $\sC$ restricts to a functor $\sC\colon \LA^{op} \rtarr G\sU$, and a $G$-operad $\NewC$ restricts to a $G$-functor $\NewC\colon \ul{G\LA}^{op} \rtarr \ul{G\sU}$.
\end{lem}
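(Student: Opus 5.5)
The plan is to define the functors directly from the operad structure maps, using degeneracies (insertion of the unit and the basepoint $\ast\in\sC(0)$) for injections and the $\SI_n$-actions for bijections. This is the standard device of \cite{MayGeo}. First, for the nonequivariant statement, recall that every based injection $\ps\colon \bm\rtarr\bn$ in $\LA$ factors uniquely as an ordered inclusion followed by a permutation $\si\in\SI_n$. For an ordered inclusion, define $\ps^*\colon \sC(n)\rtarr\sC(m)$ by
$$\ps^*(c)=\ga(c;\epz_1,\dots,\epz_n),$$
where $\epz_i=\id\in\sC(1)$ if $i$ is in the image of $\ps$ and $\epz_i=\ast\in\sC(0)$ otherwise. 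For permutations, use the right $\SI_n$-action. Contravariant functoriality $(\ph\com\ps)^*=\ps^*\com\ph^*$ will follow from associativity, unitality, and equivariance of $\ga$. Because $\ga$ is a $G$-map and the unit and $\sC(0)=\ast$ are $G$-fixed, each $\ps^*$ is a $G$-map. So we get a functor $\LA^{op}\rtarr G\sU$.

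For the $G$-operad $\NewC$, I will use the same formula, now on arbitrary based injections $\ps\colon \bm^{\al}\rtarr\bn^{\be}$ in $\ul{G\LA}$, not just $G$-maps. Factor $\ps$ as a permutation $\si\in\SI_n$ followed by an ordered inclusion. Then define
$$\ps^*(c)=\ga_G(c\cdot\si;\epz_1,\dots,\epz_n),$$
with $\epz_i\in\NewC(\mathbf{1})$ or $\NewC(\mathbf{0})$ as before. The nontrivial point is composability in \autoref{composable}. I plan to handle it by working at the level of underlying spaces, where $\NewC(\bn^{\al})$ has underlying space independent of $\al$ in the prolonged case (\autoref{contraP}). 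Alternatively, I will note that the relevant $\ga_G$ with degenerate inputs is composable once $\be$ is replaced by the twisted action for which the inputs are constant: the inputs $\mathbf{1}$ and $\mathbf{0}$ carry trivial actions, so the condition only requires that $\ps$ send $G$-orbits of the image consistently. Functoriality then follows from associativity and unitality of $\ga_G$ exactly as before.

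The remaining and main step is $G$-equivariance of the morphism map
$$\ul{G\LA}(\bm^{\al},\bn^{\be})\rtarr F\big(\NewC(\bn^{\be}),\NewC(\bm^{\al})\big),$$
where $G$ acts on injections by conjugation, $g\cdot\ps=\be(g)\com\ps\com\al(g)^{-1}$, and on the function space by conjugation. I will check that
$$(g\cdot\ps)^*(gc)=g\,\ps^*(c).$$
This is exactly where the semi-direct product $\SI_n\rtimes_{\be_c}G$ equivariance required in \autoref{crude} is used. The $G$-action on $\NewC(\bn^{\be})$ intertwines with the $\SI_n$-action through $\be_c$, and the $\SI_k\rtimes_{\be_c}G$-equivariance of $\ga_G$ moves $\be(g)$ and $\al(g)$ past the degenerate insertions. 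This bookkeeping is the obstacle I expect. I will reduce it to the generators of the morphisms (permutations and ordered inclusions) and verify it separately on each, using \autoref{semi} to compose the actions.
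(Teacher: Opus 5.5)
\paragraph{Comparison with the paper.} Your overall plan matches the paper's proof: degeneracies $\ga(c;\epz_1,\dots,\epz_n)$ with $\epz_r\in\{\id,\ast\}$, the $\SI$-actions for permutations, functoriality from the operad axioms, and then an equivariance check.

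\paragraph{The gap: your second treatment of composability.} For a nonequivariant injection $\ps\colon\bm^{\al}\rtarr\bn^{\be}$, the image need not be a union of $\be$-orbits. For example, take $\bn^{\be}$ to be $G\neq e$ acting on itself and $\ps\colon\mathbf{1}\rtarr\bn^{\be}$ hitting one point. Then some $r$ in the image and $s=\be(g)(r)$ outside it give $\bj_r^{\al_r}=\mathbf{1}\neq\mathbf{0}=\bj_s^{\al_s}$. So $(\be;\epz_1,\dots,\epz_n)$ is not composable in the sense of \autoref{composable}, and $\ga_G(c\cdot\si;\epz_1,\dots,\epz_n)$ is simply undefined.

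``Replacing $\be$ by a twisted action'' does not help. It presupposes maps $\NewC(\bn^{\be})\rtarr\NewC(\bn^{\be'})$ along nonequivariant bijections, which is exactly the functoriality you are trying to construct. The data of \autoref{crude} do not relate $\NewC(\bn^{\be})$ and $\NewC(\bn^{\be'})$ for $\be\neq\be'$ at all. Indeed, setting $\NewC(\bn^{\be})=\sC(n)$ for trivial $\be$ and $\emptyset$ otherwise satisfies those axioms as written. Yet it cannot be a functor on $\ul{G\LA}^{op}$, where $\bn\iso\bn^{\be}$. The paper's sketch applies the same degeneracy formula to non-$G$-maps and does not address this point either.

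\paragraph{What works instead.} Your first option is correct, and it is the setting the paper actually uses. Take $\NewC=\bP\sC$, so that $\NewC(\bn^{\be})$ is $\sC(n)$ with $g\cdot_{\be}c=gc\be(g)^{-1}$. Define $\ps^*$ on underlying spaces by the first half of the lemma. Then
$(\be(g)\com\ps\com\al(g)^{-1})^*(gc\be(g)^{-1})=\ps^*(gc)\al(g)^{-1}=g\ps^*(c)\al(g)^{-1}$.
This uses only contravariance, $\si^*(c)=c\si$, and $G$-equivariance of the nonequivariant $\ps^*$. That identity is exactly the $G$-functor condition. So the second statement is the first one transported through \autoref{contraP}, and you do not need the semi-direct product equivariance of $\ga_G$. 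For a $G$-operad not presented as a prolongation, you must first produce an identification $\NewC\iso\bP\sC$.

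\paragraph{A minor correction.} A factorization $\ps=\si\com\io$ with $\si\in\SI_n$ is not unique in general. Either use the unique factorization $\ps=\io\com\si$ with $\si\in\SI_m$ and $\io$ order-preserving, or check independence of the choice via equivariance of $\ga$.
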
  
\begin{proof} The first statement goes back to \cite[Construction 2.4]{MayGeo}, which is recalled in \cite[Definition 9.4]{KMZ1}.  The argument for $G$-operads is similar.   For an ordered injection $\ps\colon \bm^{\al} \rtarr \bn^{\be}$ (not a $G$-map), define 
$\ps^*\colon \sC(\bn^{\be}) \rtarr \sC(\bm^{\al})$ by 
$$ \ps^*(c) = \ga(c; \epz_1, \cdots, \epz_k),$$
where $\epz_r = \id\in\NewC(\mathbf{1})$ if $r$ is in $\mathrm{Im}(\ps)$ and $\epz_r = \ast \in \sC(0)$ if not.  Functoriality follows by use of composability and checks of equivariance. For a more conceptual proof based on comparison with categories of operators, see \cite[Lemma 7.45]{KMZ1}. 
\end{proof} 

\begin{prop}\label{CtoCG}  A $G$-operad $\NewC$ in $G\sU$ restricts on $G$-trivial based finite $G$-sets 
$\bn^{\epz_n}$ to an operad  $\sC = \bU\NewC$ in $G\sU$.  Conversely, an operad $\sC$ in $G\sU$ prolongs to a $G$-operad $\NewC = \bP \sC$ in $G\sU$.  The adjoint equivalence $(\bP,\bU)$ of \autoref{contraP} induces an adjoint equivalence between the category of operads in $G\sU$ and the category of $G$-operads in $G\sU$. 
\end{prop}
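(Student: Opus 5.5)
The plan is to lift the adjoint equivalence $(\bP,\bU)$ of \autoref{contraP} from the level of functors to the level of operad structure. I will show that $\bU$ carries operad structure to operad structure, that $\bP$ does too, and that the unit and counit are maps of operads. The first direction is immediate. For trivial $\epz_n$, every tuple $(\epz_k;\epz_{j_1},\dots,\epz_{j_k})$ is composable in the sense of \autoref{composable}, and $\ga(\epz_k;\wed\epz_{j_r}) = \epz_j$. So the maps $\ga_G$ restrict to maps $\sC(k)\times\sC(j_1)\times\cdots\times\sC(j_k)\rtarr\sC(j)$. The semidirect product $\SI_n\rtimes_{(\epz_n)_c}G$ is just $\SI_n\times G$, so the required equivariance specializes to the usual $G$-equivariance together with $\SI$-equivariance. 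Associativity and unitality are also just the restrictions of those for $\NewC$.

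For the converse, set $\NewC(\bn^{\al}) = \sC(n)^{\al}$ as in \autoref{contraP}. This is $\sC(n)$ with $G$-action $g\cdot_{\al} c = g c\,\al(g)^{-1}$. Equivalently, it is the action of $\GA_{\al}\iso G$ as in \autoref{Yalpha}, written with $\SI_n$ acting on the right. I will check directly that the right actions of $\SI_n$ and of $G$ (via $\cdot_\al$) assemble into an action of $\SI_n\rtimes_{\al_c}G$, using the formulas of \autoref{semi}. The structure maps are the underlying maps
$$\ga_G = \ga\colon \sC(k)^{\be}\times \sC(j_1)^{\al_1}\times\cdots\times\sC(j_k)^{\al_k}\rtarr \sC(j)^{\ga(\be;\wed\al_r)}.$$
Associativity, unitality and $\SI$-equivariance then hold automatically, since the underlying maps are those of $\sC$.

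The one real point, and the step I expect to be the main obstacle, is that this $\ga$ is $G$-equivariant for the twisted actions when $(\be;\{\al_r\})$ is composable. I will compute $\ga(g c\,\be(g)^{-1}; g d_1\al_1(g)^{-1},\dots)$ and use the equivariance axioms of \cite{MayOp1}. The right action of $\be(g)^{-1}$ on $c$ moves past $\ga$: it permutes the inputs $d_r$ and contributes the block permutation $\be(g)^{-1}\langle j_1,\dots,j_k\rangle$. Composability guarantees that the permuted inputs sit in slots of the same size, so the blocks match. The $\al_r(g)^{-1}$ then contribute the sum permutation inside the blocks. The $G$-equivariance of $\ga$ pulls out $g$. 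Composing the block permutation with the within-block permutations should give exactly $\ga(\be;\wed\al_r)(g)^{-1}$ by \autoref{composable}. I would check this carefully with $i$ in the $r$th block, tracking where the blockwise formula sends it.

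The $G$-equivariance of the semidirect product actions follows in the same way. Finally, $\bU\bP\sC=\sC$ is an identity of operads. The counit $\bP\bU\NewC\rtarr\NewC$ of \autoref{contraP} is induced by the identity elements $\bn^{\epz}\rtarr\bn^{\al}$, as in the proof of \autoref{PUone}, acting through \autoref{LAtoLAG}. The same Yoneda argument shows it commutes with the $\ga_G$, because the operad structure maps are compatible with those identity injections. Since the counit is an isomorphism of functors and a map of operads, it is an isomorphism of operads. Together with the functor-level statement of \autoref{contraP}, this gives the adjoint equivalence of operad categories.
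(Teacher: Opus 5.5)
\textbf{There is a genuine gap in the step you single out as the real point.} Your outline is the paper's: prolong by $\NewC(\bn^{\al})=\sC(n)^{\al}$, keep the structure maps of $\sC$, and observe that they are $G$-maps. But you test equivariance by computing $\ga(gc\,\be(g)^{-1}; gd_1\al_1(g)^{-1},\dots,gd_k\al_k(g)^{-1})$. That is, you let $G$ act \emph{diagonally} on $\NewC(\bk^{\be})\times\NewC(\bj_1^{\al_1})\times\cdots\times\NewC(\bj_k^{\al_k})$, and for that action the claim is false.

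When you pull $\be(g)^{-1}$ through $\ga$, the inputs come out reordered by $\be(g)$: in the order $d_{\be(g)(1)},\dots,d_{\be(g)(k)}$, up to convention. Nothing later undoes this. Using composability to identify the $\al$'s, you end with $g\cdot_{\ga(\be;\wed\al_r)}\ga(c;d_{\be(g)(1)},\dots,d_{\be(g)(k)})$ rather than $g\cdot_{\ga(\be;\wed\al_r)}\ga(c;d_1,\dots,d_k)$. Your bookkeeping tracks only the permutations and loses this reordering of the $d_r$.

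Here is a concrete counterexample. Take $G=C_2=\{1,t\}$ and $\sC=\sM$, the associativity operad with trivial $G$-action. Let $\be(t)=\tau=(1\,2)$, $j_1=j_2=2$, $\al_1=\al_2=\epz_2$, $c=1$, $d_1=1$, $d_2=\tau$. The output twist $\ga(\be;\wed\al_r)(t)$ is the block swap $\tau\langle 2,2\rangle$. The diagonal action yields $\ga(1;\tau,1)\,\tau\langle 2,2\rangle$, while the target action yields $\ga(1;1,\tau)\,\tau\langle 2,2\rangle$, and these differ.

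\textbf{The fix} is to let $G$ act on the source through $\SI_k\rtimes_{\be_c}G$ acting on coordinates, as \autoref{crude} indicates. Explicitly, $g\cdot(c;d_1,\dots,d_k)=(g\cdot_{\be}c;e_1,\dots,e_k)$ with $e_{\be(g)(r)}=g\cdot_{\al_r}d_r$. Composability is exactly what makes this well defined. It gives $\NewC(\bj_r^{\al_r})=\NewC(\bj_{\be(g)(r)}^{\al_{\be(g)(r)}})$, so the permuted tuple lies in the same product. Its role is not merely that ``block sizes match.''

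With this action, the reordering produced by pulling $\be(g)^{-1}$ through $\ga$ cancels the reordering built into the action. Your block and within-block bookkeeping then does give $\ga(\be;\wed\al_r)(g)^{-1}$, hence $g\cdot_{\ga(\be;\wed\al_r)}\ga(c;d_1,\dots,d_k)$. With that correction, the rest of your argument is fine: restriction to trivial $\epz_n$, associativity and unitality inherited from $\sC$, and the counit via \autoref{contraP} and \autoref{LAtoLAG}. It is then an expanded version of the paper's brief proof, which simply asserts that \autoref{crude} was designed so that these maps are $G$-maps and refers to \cite{KMZ1}.
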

\begin{proof}  The prolongation starts from the definition $\NewC(\bn^{\al}) = \sC(n)^{{\al}}$. Ignoring the actions of $G$ on our finite sets, the structure maps are given by the structure maps of $\sC$. We have defined $G$-operads so that the structure maps $\ga$ are $G$-maps.   This result also appears as \cite[Proposition 7.30]{KMZ1}, where it is compared with an analog for categories of operators.  \end{proof}

\subsection{The monads of  $G$-spaces associated to $G$-operads}\label{Gmonad}

We now turn to the monads associated to operads $\sC$ and $G$-operads $\NewC$.  Fix an operad $\sC$ in $G\sU$ and a $G$-operad $\NewC$, which we can take to be $\bP\sC$.  We have already defined the underlying functor $\bC$ as $\hat{\bC}\com \bQ$, and it is standard that  the structure maps of an operad induce a monad structure on $\bC$.  Similarly, we define the underlying functor $\bC_G$ to be  $\hat{\bC}_G\com \bP\com \bQ$.   Then \autoref{cuter} shows that $\bC_G$ can be identified with $\bC$ and so inherits a monad structure.   Thus, although $G$-operads are new structures, they do not give rise to new monads.   However, that path is not available in the specification of  $\Cp = \Cpre\com \bS_{\star}$ as a monad.  For that, we must look a bit more carefully at the definitions.

\begin{rem}   Looking only at $\bC_G$, it seems that the product $\mu\colon \bC_G\bC_G \rtarr \bC_G$ might only be partially defined in view of the composability requirement in the definition of $\ga$.   However, \autoref{cuter} saves the day: the identification of $\bC_G$ with $\bC$ tells us how to get around this. \end{rem}

\subsection{$\OG$-operads $\sC_{\star}$ and monads of orbital presheaves}\label{OGmonad}

Since $G$-operads are defined in terms of products of $G$-spaces and passage to fixed points commutes with products, it is clear that the fixed points of $G$-operads $\NewC$ give a functor from $G\sO$ to operads of some sort.  We call these $G\sO$-operads or ``orbital operads" .  The only examples we know are of the form $\ul{\bR}_{\star}\NewC$, where $\NewC \iso \bP\sC$ for an operad $\sC$ in $G$-spaces.  We shall make these examples explicit starting from \autoref{ThisHelps}, but we first give some heuristics.  

Consider the outer rectangle of \autoref{eq:contra}.  Since $\bR \bC = \Cp \bR$,  $\bR\bC\bC =\Cp\Cp \bR$.  The monad $(\bC,\mu,\et)$ is defined in terms of commutative diagrams, and these diagrams still commute when post-composed with $\bR$.  Via our equalities, it follows that we have monad type diagrams for $\Cp$ after pre-composition with $\bR$. We therefore know that we have the structure  of a monad restricted to orbital presheaves of the form $\sX = \bR  X$ for a based $G$-space $X$.  Making this explicit is an exercise   from                  \autoref{ThisHelps}.  Replacing $X^H$  by $\sX(G/H)$ in the result tells us how to extend the  monad to a monad structure on $\Cp$, and this idea makes clear  why, although in principle we are starting from $\NewC$, its partial operadic structure disappears on passage to fixed points.  However, in terms of exposition, it seems simpler to just start with $\Cp$, taking  $\sCprest = \ul{\bR}_{\star}\bP\sC$ for a $\SI$-free operad $\sC$ of $G$-spaces.

Using \autoref{SecWed}, \autoref{secGH}, and \autoref{CpreDefn}, we have the expansion of definitions
\begin{eqnarray*}\label{unravel}
 (\Cp_{\star} \sX)(G/H) \ \ = \ \ \Cp_{G/H}\sX & =&  \coprod_T \sC^{pre}_{G/H}(T) \times (\bS_{G/H}\sX)(T) / {\approx} \\
         & =&  \coprod_T \sC^{pre}_{G/H}(T) \times_{\SI_T} (\bS_{G/H}\sX)(T)/{\sim}. \\
         & = & \coprod_T\sC^{pre}_{G/H}(T) \times_{\SI_T}  \prod_{i=1}^m \sX(G/K_i)/{\sim} \\
\end{eqnarray*}
Here $T$ runs over the finite $G$-sets over $G/H$ and $T = \amalg_{i=1}^m S_i$ is a decomposition into disjoint unions of orbits
 $S_i = G/K_i$ with $K_i$ subconjugate  to $H$.   Taking $T = (\mathbf{t}_{g/h}\bn)^{\be}$ as in \autoref{redefine1},  we have
$S_i = \mathbf{t}_{g/k_i} ^{\al_{G/K_i}} $, where $t_{g/k_i} = t_{g/h} n_i$ and $\sum_i n_i = n$.  We can view $\be$ as the composite

$$ \xymatrix@1{
G \ar[r]^-{\DE} & G^m \ar[rr]^-{\times_i\al_{G/K_i}}  && \times_i  \SI_{t_{g/k_i}} \ar[r]  & \SI_{{\tiny{\sum}}_i t_{g/k_i}} \ar[r]^{=}  & \SI_{t_{g/h}n}. \\}$$
Just as nonequivariantly, we first use equivariance relations and then basepoint identifications in the construction.  Specializing and expanding,  

\begin{eqnarray*} \Cp_{G/H}\Cp_{\star} \sX & =& \coprod_T \sC^{pre}_{G/H}(T) \times_{\SI_T} (\bS_{G/H}\Cp_{\star} \sX)(T)  / {\sim} \\
                                        & =&  \coprod_T \sC^{pre}_{G/H}(T) \times_{\SI_T} \prod_{i=1}^m(\Cp_{\star} \sX)(G/K_i)  / {\sim} \\
                       & =&   \coprod_T \sC^{pre}_{G/H}(T) \times_{\SI_T}  \prod_{i=1}^m \left(\sC^{pre}_{G/K_i}(S_i)\times_{\SI_{S_i}} \prod_{j=1}^{n_{i}} \sX(G/K_{i,j})  \right) / {\sim} \\
\end{eqnarray*}
Permuting variables, this is 
$$ \coprod_T\left(\sC^{pre}_{G/H}(T) \times_{\SI_T} \prod_{i=1}^m \sC^{pre}_{G/K_i}(S_i) \times_{\prod_{i=1}^m \SI_{S_i}}
                       \prod_{j=1}^{n_{i}} \sX(G/K_{i,j})      \right)/{\sim}.$$
Here $S_i = \amalg_{j=1}^{n_{i,j}}S_{i,j}$ is a decomposition into unions of orbits
$S_{i,j} = {G/K_{i,j}}$ with $K_{i,j}$ subconjugate to $K_i$.   Writing $S_i =(\mathbf{t}_{g/k_i} \bn_i)^{\be_i}$, $1\leq i\leq m$, we have  
$S_{i,j} = \mathbf{t}_{g/k_{i,j}}^{\al_{G/K_{i,j}}}$, where $t_{g/k_{i,j}} = t_{g/k_i}n_{i,j}$ and $\sum_j n_{i,j} = n_i$.  In view of \autoref{cuter2}, we get the same functors here if we replace  $\sC_{G/H}^{pre}$ and $\sC_{G/K{_i}}^{pre}$by $(\NewC)^H$ and 
$(\NewC)^{K{_i}}$.  Then we can pass to fixed points from the operadic structure maps of $\NewC$ to obtain maps
$$  (\NewC)^H(T) \times\prod_{i=1}^m (\NewC)^{K{_i}}(S_i) \rtarr (\NewC)^H(\amalg_i S_i)$$
which induce the desired monadic product $(\Cp_{\star}\Cp_{\star} \sX)(G/H) \rtarr (\Cp_{\star} \sX)(G/H)$.  We can use that $(\NewC)^H(T) =\sC(|T|)^{\GA_{\be}}$ to expand explicitly in terms of wreath products as defined in \autoref{wreath}.  Since this paper has no applications that need the expansion, we desist.  An essential point is that the input $\GA's$ of the structure maps determine the output $\GA's$ of their targets.

\subsection{Algebras over the monads of orbital presheaves} \label{sec:cpre-algebra}

An algebra over the monad $\Cp$ is an orbital presheaf $\sX \in \OG^{op}[\sT]$ together with a structure map $\Cp \sX \to \sX$ satisfying the
usual associativity and unit conditions.   The following result will give a key starting point for our applications.  It says that we are in the context of Assumption $A_{com}$ of \cite[Section 6.1]{KMZ1}.  Let $\om\colon \Cp \bR\rtarr \bR \bC$ denote the isomorphism of \autoref{prop:phi-C-commute}.

\begin{prop}\label{Acom} The following diagrams commute:
\[ \xymatrix{
& \bR  \ar[dl]_{\et \bR}  \ar[dr]^{\bR \et} & \\
\Cp \bR \ar[rr]_-{\om} &  & \bR\bC \\}
\ \ \ \ \ \ \
\xymatrix{
\Cp\Cp\bR \ar[r]^-{\Cp\om} \ar[d]_{\mu\bR} & \Cp\bR\bC \ar[r]^{\om \bC} & \bR\bC\bC \ar[d]^{\bR\mu}\\
\Cp\bR \ar[rr]_-{\om} & & \bR\bC\\}
\]
Therefore, for any $\bC$-algebra $X$, the orbital presheaf $\bR X$ is a $\Cp$-algebra.\
\end{prop}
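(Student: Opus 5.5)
The plan is to check both diagrams levelwise at each orbit $G/H$, with both sides written out explicitly through the description of $\om$ from the proof of \autoref{RCdiagram}. For a based $G$-space $X$, $(\Cp\bR X)(G/H)$ is a coequalizer of pieces $\sCpre_{G/H}(T)\times(\bS_{G/H}\bR X)(T)$. By the identification $\bS_\star\bR\iso\ul{\bR}_\star\bP\bQ$ of \autoref{secdia}, each such piece is $\sC(n)^{\GA_\be}\times (X^n)^{\GA_\be}=(\sC(n)\times X^n)^{\GA_\be}$. The map $\om$ is induced by the evident maps $(\sC(n)\times X^n)^{\GA_\be}\rtarr (\sC(n)\times_{\SI_n}X^n)^H\rtarr (\bC X)^H$. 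Since $\om$ is an isomorphism by \autoref{prop:phi-C-commute}, it suffices to check commutativity on these representative elements, that is, on pairs $(c,x)$ with $c\in\sC(n)^{\GA_\be}$ and $x\in (X^n)^{\GA_\be}$.

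The unit triangle is the easy case. The unit $\et\colon\bR X\rtarr\Cp\bR X$ at $G/H$ sends $x\in X^H$ to the class of $(\id,x)$ in the summand $T=G/H$ with $n=1$. This uses that the unit of the $\OG$-operad $\ul{\bR}_\star\NewC$ is the fixed point of $\id\in\sC(1)$, and that $(\bS_{G/H}\bR X)(G/H)=X^H$ by \autoref{SecWed}. Under $\om$ this element goes to $[\id,x]\in(\bC X)^H$, which is $\bR\et(x)$.

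For the multiplication square, I start with an element of $(\Cp\Cp\bR X)(G/H)$ represented, in the expansion given in \autoref{OGmonad}, by data $(c;\,(c_i;\,x_{i,j}))$. Here $c\in(\NewC)^H(T)$ for a decomposition $T=\amalg_i S_i$ with $S_i\iso G/K_i$, and $c_i\in(\NewC)^{K_i}(S_i)$. By \autoref{cuter2} we may use $(\NewC)^H$ in place of $\sCpre_{G/H}$. Going right and then down, $\Cp\om$ replaces each inner factor by $[c_i;x_{i,j}]\in(\bC X)^{K_i}$. Then $\om\bC$ views the section over $T$ determined by these points as a $\GA_\be$-fixed point of $(\bC X)^{|T|}$, and $\bR\mu$ applies the operadic structure map $\ga$ of $\sC$ coordinatewise. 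Going down and then right, $\mu\bR$ is by definition induced by the fixed points of the structure maps $\ga_G$ of $\NewC=\bP\sC$, and $\om$ then forgets the fixed-point data. Since $\ga_G$ is $\ga$ on underlying spaces (\autoref{CtoCG}), both composites give $[\ga(c;c_{g_1},\dots),(x\ldots)]$ in $(\bC X)^H$.

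The main obstacle is bookkeeping inside the section identification. A section over $T$ records $c_i$ and $x_{i,j}$ only at the identity coset of each orbit $S_i$. Its values at the other cosets are the translates $g\cdot c_i$ and $g\cdot x_{i,j}$ given by the action \autoref{gee}. These translates are exactly the coordinates that appear in the $|T|$-fold product used by $\bR\mu$. I would check this with the wreath-product description of $\GA_\be$ and the composability condition of \autoref{composable}, which forces the same input at coordinates in a common orbit. This check is routine but delicate.

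The final statement follows formally. Given a $\bC$-algebra $(X,\theta)$, define the structure map $\Cp\bR X\rtarr\bR X$ as the composite $\bR\theta\com\om$. Its unit and associativity axioms reduce to those of $\theta$ via the two diagrams together with the naturality of $\om$.
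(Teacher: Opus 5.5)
Your proposal is correct and is essentially the paper's own argument, written out at the level of representatives: the unit triangle comes from $\id\in\sC(1)$ in the $T=G/H$ summand, the multiplication square from the fact that the structure maps of $\NewC=\bP\sC$ are those of $\sC$ on underlying spaces (\autoref{CtoCG}) together with the construction of $\Cpre$, and the $\Cp$-algebra structure on $\bR X$ is $\bR\theta\com\om$, exactly as in the paper. One small correction: checking the square on representatives is justified because $(\Cp\Cp\bR X)(G/H)$ is a quotient of a coproduct of such pieces, not because $\om$ is an isomorphism, since the two diagrams and the induced algebra structure use only the naturality of $\om$.
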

\begin{proof} Modulo where it is put, $\et$ in the first diagram comes from the unit element, thought of as $\id\colon * \rtarr \sC(1)$, of the operad $\sC$ we start with.   Similarly, modulo placing it to the left of $\bS_{\star}$ or of $\bP\bQ$, the second diagram comes down to use of composition in the operad $\sC$, as dictated by \autoref{CtoCG}, and the construction of $\Cpre$.    A formal proof only elaborates this by explicitly writing down the maps.  Applying $\bR$ to the structure map $\Cmon X \to X$ of a $\bC$-algebra $X$, we obtain the structure map  $\Cp \bR X\iso \bR\bC X \rtarr \bR X$ of $\bR X$.
\end{proof}

The counit $\epz\colon \bL\bR \rtarr \Id$ is isomorphic to the identity natural transformation.  However, the following analog of the previous result is not as useful since the functor $\bL$ does not preserve weak equivalences, necessitating the intermediary of cofibrant approximation or the Elmendorf construction.
\begin{prop}
  \label{prop:theta-C}
 For any $\Cp$-algebra $\sX$, $\bL(\sX)$ is a $\Cmon$-algebra.
\end{prop}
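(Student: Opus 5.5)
The plan is to produce a map $\bC\bL\sX \rtarr \bL\sX$ by evaluating the structure map of $\sX$ at the free orbit $G/e$. Recall that $\bL\sX = \sX(G/e)$, with the $G$-action induced by the automorphisms $G/e\rtarr G/e$. The key step is a natural isomorphism of $G$-spaces
\[ \bL\Cp \iso \bC\bL, \qquad \text{that is,} \qquad (\Cp\sX)(G/e) \iso \bC\big(\sX(G/e)\big), \]
compatible with the monad structures. With this in hand, the $\bC$-algebra structure on $\bL\sX$ is the composite
\[ \bC\bL\sX \iso \bL\Cp\sX \rtarr \bL\sX, \]
where the second map is $\bL$ of the $\Cp$-algebra structure map $\theta\colon \Cp\sX\rtarr\sX$. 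This is the mate of \autoref{Acom} under the adjunction $(\bL,\bR)$. Alternatively, we can define the map as the adjoint of $\Cp\sX \rtarr \Cp\bR\bL\sX \iso \bR\bC\bL\sX$, using the unit $\sX\rtarr\bR\bL\sX$ and the isomorphism $\om$ of \autoref{prop:phi-C-commute}. I would use the direct description and then check the two descriptions agree.

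To identify $(\Cp\sX)(G/e)$, I would unravel \autoref{newC1} with $H=e$. Then $G\LA_{G/e}\simeq e\LA=\LA$, and a $G$-set $T$ over $G/e$ is free, $T\iso G\times\bn$. By \autoref{cuter2} with $H=e$, we get $(\NewC)^e(\bn)=\sC(n)$, so $\sC^{pre}_{G/e}(T)=\sC(n)$. Since every orbit of $T$ is $G/e$, \autoref{SecWed} gives $(\bS_{G/e}\sX)(T)=\sX(G/e)^n$. The coequalizer \autoref{newC2} then becomes $\sC\otimes_{\LA}\bQ(\sX(G/e)) = \bC\bL\sX$ as spaces. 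The $G$-action on $(\Cp\sX)(G/e)$ comes from the maps $\ph\colon G/e\rtarr G/e$, $e\mapsto g$, through $\sC^{pre}_\ph$ and $\bS_\ph$. Using \autoref{phistar2} with $\ga_g=c_{g^{-1}}$ on the trivial group, I would check that this is the diagonal action of $g$ on $\sC(n)\times\sX(G/e)^n$, so that the identification is $G$-equivariant.

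Next, I would verify that the isomorphism $\bL\Cp\iso\bC\bL$ carries the unit and product of $\Cp$, restricted to level $G/e$, to those of $\bC$. The unit is clear, since both come from $\id\in\sC(1)$. For the product, the description in \autoref{OGmonad} at $H=e$ has all $K_i=e$ and uses $(\NewC)^e$, which is just $\sC$ with its operad structure maps. So the product on $\Cp\Cp\sX(G/e)$ is exactly the one defining $\mu$ for $\bC$ on $\sX(G/e)$. Given this monad compatibility, the associativity and unit diagrams for $\bC\bL\sX\rtarr\bL\sX$ follow formally from those for $\theta$ by applying $\bL$.

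The main obstacle I expect is the equivariance bookkeeping in the second step: the $G$-action on $\bL\Cp\sX$ arises from horizontal functoriality of the Grothendieck functors along the automorphisms of $G/e$. These involve the chosen orderings of \autoref{Orderings} and \autoref{Orderings2}, so the induced maps on $\sC(n)\times\sX(G/e)^n$ may carry a permutation twist, which must be shown to vanish in the $\SI_n$-coinvariants. I would handle this by choosing $g$ minimal in its coset (automatic here, since cosets are singletons) and computing the pullback $\ph^*(G\times\bn)$ explicitly.
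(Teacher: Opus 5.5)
Your proposal is correct and is essentially the paper's argument: the paper observes that, since $e$ is the only subgroup subconjugate to $e$, the structure map $\Cp\sX\rightarrow\sX$ evaluated at $G/e$ is, by inspection, a $\Cmon$-action on $\bL\sX=\sX(G/e)$, and you spell out that inspection as an isomorphism $\bL\Cp\sX\cong\Cmon\bL\sX$ compatible with the monad structures. Of your two routes to equivariance, the mate of $\omega$ is the cleaner one. It uses that the unit $\sX\rightarrow\bR\bL\sX$ is the identity at $G/e$ and that $(\Cp\sX)(G/e)$ depends only on $\sX(G/e)$, so $G$-equivariance is automatic and you avoid the ordering bookkeeping you flag as the main obstacle.
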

\begin{proof}  Since the only subgroup of $G$ subconjugate to $e$ is $e$, inspection of the structure map $\Cp \sX \rtarr \sX$, evaluated on  $G/e$, shows that it is a structure map $\Cmon \bL \sX \rtarr \bL \sX$ of an action by $\Cmon$. 
\end{proof} 
  The two previous results have the following immediate implication.
\begin{cor}
  \label{cor:adjunction}
 The adjunction  $ \bL \dashv \bR$ restricts to an adjunction on algebras:
\begin{equation*}
  \begin{tikzcd}
    \bL: \Cp[ \OG^{op}[\sT]] \ar[r, shift left] & \Cmon[G\sT]: \bR \ar[l,
    shift left]
  \end{tikzcd}
\end{equation*}
\end{cor}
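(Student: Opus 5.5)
The plan is to show that the unit and counit of the adjunction $\bL\dashv\bR$ on the underlying categories are maps of algebras. By \autoref{Acom} and \autoref{prop:theta-C}, both functors already lift to algebras. The only remaining task is to check that the adjunction bijection
$$G\sT(\bL\sX, X)\iso \OGop(\sX,\bR X)$$
restricts to a bijection between algebra maps on the two sides. Here $\sX$ is a $\Cp$-algebra and $X$ is a $\bC$-algebra, with the $\bC$-action on $\bL\sX=\sX(G/e)$ given by the $G/e$ component of the structure map $\xi\colon\Cp\sX\rtarr\sX$, and the $\Cp$-action on $\bR X$ given by $\bR\theta\com\om$.

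First I will make the identification in \autoref{prop:theta-C} precise. We have $\bL\Cp = \bC\bL$ on the nose, since $(\Cp\sX)(G/e)=\sC^{pre}_{G/e}\otimes_{G\LA_{G/e}}\bS_{G/e}\sX$. Via $G\LA_{G/e}\iso e\LA=\LA$ and \autoref{cuter2} (or directly \autoref{eq:33} with $H=e$), this is $\sC\otimes_{\LA}\bQ(\sX(G/e)) = \bC\bL\sX$, with the $G$-action coming from horizontal functoriality along the automorphisms of $G/e$. Call this isomorphism $\la\colon\bL\Cp\rtarr\bC\bL$.

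The key compatibility is that $\la$ and $\om$ are conjugate under the adjunction. Concretely, I will check that the composite
$$\bL\Cp\bR X\xrightarrow{\bL\om}\bL\bR\bC X=\bC X$$
agrees with
$$\bL\Cp\bR X\xrightarrow{\la}\bC\bL\bR X=\bC X.$$
Both are evaluations at $G/e$ of the identification in \autoref{RCdiagram}, and at $H=e$ the bundle decomposition of \autoref{prop:fiberbundle} has only the trivial representation, so both maps are the identity of $\sC\otimes_\LA \bQ X$.

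Given this, take an algebra map $f\colon\bL\sX\rtarr X$ with adjoint $\tilde f=\bR f\com\et\colon\sX\rtarr\bR X$. We must show
$$\tilde f\com\xi=\bR\theta\com\om\com\Cp\tilde f.$$
By adjunction it suffices to compare the adjoints, that is, to apply $\bL$ and compose with the counit, which is the identity. The left side becomes $f\com\bL\xi$. For the right side, $\la$ is natural, and the compatibility above identifies $\bL\om$ with $\la$ on $\bR$. So the right side becomes $\theta\com\bC f\com\la$, and this equals $f\com\bL\xi$ precisely because $f$ is a $\bC$-map for the structure $\bL\xi\com\la^{-1}$.

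The converse direction runs the same argument backwards. The adjoint of $g\colon\sX\rtarr\bR X$ is $\bL g$, since $\epz$ is the identity, and applying $\bL$ to the algebra-map square for $g$ gives the $\bC$-algebra square for $\bL g$ by the same identifications.

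The main obstacle is the compatibility between $\la$ and $\om$ at level $G/e$. I would handle it by unwinding \autoref{RCdiagram} with $H=e$, where everything reduces to the nonequivariant coequalizer defining $\bC$. The only care needed is with the $G$-actions, which on both sides are induced by the action of $G=\mathrm{Aut}(G/e)$, using the orderings fixed in \autoref{Orderings2}.
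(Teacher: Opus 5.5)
Your proof is correct and follows the paper's route. The paper simply calls the corollary an immediate consequence of \autoref{Acom}, which lifts $\bR$ via $\om$, and \autoref{prop:theta-C}, which lifts $\bL$ via the identification $\bL\Cp\iso\bC\bL$ at $G/e$. What you add is the compatibility check the paper leaves implicit: $\bL\om=\la\bR$, so $\la$ is the mate of $\om$ given $\epz=\id$ and $\bL\et=\id$. This is exactly what makes the unit and counit algebra maps, and it holds because at $G/e$ both maps are induced on coequalizers by the identity of $\sC(n)\times X^n$.
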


As noted in \autoref{humbug}, we also have the monad $\bR\bC\bL$ on orbital presheaves.  After application of cofibrant approximation, it can be used in place of $\Cp$ in the proof of \autoref{thm:pre-machine}.  However, in applications such as those of Sections \ref{units} and \ref{PICBR}, we start with an example of the form 
$\bR X$, then perform constructions on orbital presheaves that do {\em{not}} commute with $\bR$ to obtain the examples of interest.  Here application of 
$\bR\bC\bL$ does not work and we must use $\Cp$ instead. However, we do not need combinatorial precision in the description of $\Cp$ in such applications.


\section{Mackey functors and Eilenberg-MacLane $G$-spectra}\label{Examples}
 We consider the commutativity $G$-operad $\sN$.  Algebras over $\sN^{pre}$ give a new description of Mackey 
 functors and therefore a new construction of Eilenberg-MacLane $G$-spectra.
 
\subsection{The commutative case: $\bN^{pre}$-algebras}
\label{sec:mackey}
 Let $\sN$ be the commutativity $G$-operad with $\sN(n) = *$ for all $n$.  Using the description of $\Cp$ in \autoref{Cpre2}, we find  the following explicit description of $\mathbb{N}^{\mathrm{pre}}$ by classifying objects $T \in G\LA_*$ and breaking their groups $\SI_T$ into actions on orbits and permutations.

 \begin{exmp} \label{ex:Npre} Write $(H_i)$ for the $H$-conjugacy class of a subgroup $H_i$ of $H$.
\begin{align*} 
\mathbb{N}^{\mathrm{pre}}\sX(G/H) & = \big(\coprod_{T \in G\Lambda_{G/H}} \bS\sX(T )\big)/ \SI_T/\sim  \\
& =  \prod_{(H_i) \subset H} \mathbb{N} \big(\sX(G/H_i) / W_H H_i\big)
\end{align*}
For $K \subset H$, the restriction map $\phi^{*}: \mathbb{N}^{\mathrm{pre}}\sX(G/H) \rtarr
\mathbb{N}^{\mathrm{pre}}\sX(G/K)$ is induced by 
contravariant maps $\varphi^{*}: \sX(G/H_i) \rtarr \prod_j\sX(G/K_{ij})$ of the presheaf $\sX$, where $K_{ij} = H_i \cap
  g_jKg_j^{-1}$ for $g_j \in H_i\backslash G/K$ and
$\amalg_j G/K_{ij}$ is the pullback in the diagram 
\begin{equation}
  \label{eq:pull-back-Mackey}
  \begin{tikzcd}
    \amalg_j G/K_{ij} \ar[r] \ar[d,"\varphi"'] & G/K \ar[d,"\phi"]\\
    G/H_i \ar[r] & G/H.
  \end{tikzcd}
\end{equation}

The structure map $\mathbb{N}^{\mathrm{pre}} \mathbb{N}^{\mathrm{pre}} \sX(G/H) \rtarr 
\mathbb{N}^{\mathrm{pre}} \sX(G/H)$ is the map
\begin{equation}
  \label{eq:NpreComp0}
  \begin{tikzcd}
    \prod_{(H_i) \subset H} \mathbb{N}
    \bigg(\prod_{(H_{ij}) \subset (H_{i})}
    \mathbb{N} \Big(\sX(G/H_{ij}) / W_{H_i} H_{ij}\Big) / W_{H}H_{i} \bigg)
    \ar[d,"\mathbb{N}\prod \rtarr \prod\mathbb{N}"] \\
    \prod_{(H_{ij}) \subset (H_i)\subset H} \mathbb{N}
    \bigg( \mathbb{N} \Big(\sX(G/H_{ij}) / W_{H_i}H_{ij}\Big) / W_{H}H_{i}
    \bigg)
  \end{tikzcd}
\end{equation}
composed with the product over $(H_{ij}) \subset (H_i) \subset H$ of the following maps.
Here we abbreviate notation by writing $X = \sX(G/H_{ij})$.
\begin{equation}
  \label{eq:NpreComp}
  \begin{tikzcd}
  \mathbb{N} \bigg(\mathbb{N} (X / W_{H_i}H_{ij}) /W_{H}H_{i} \bigg)
    \ar[d,"\text{ induced by }"',"\bN (-)/W_H H_i \rtarr  \mathbb{N}(-/W_H H_i)"] \\
    \mathbb{N} \bigg(\mathbb{N} \big((X/W_{H_i}H_{ij})/ W_{H} H_{i} \big) \bigg)
    \ar[d,"\text{ induced by }"',"(X/W_{H_i}H_{ij})/ W_{H} H_{i} \rtarr X / W_{H}H_{ij}"]\\
   \mathbb{N} \bigg(\mathbb{N}\big(X / W_{H}H_{ij} \big) \bigg)
    \ar[d,"\mathbb{N} \circ \mathbb{N} \rtarr \mathbb{N}"] \\
    \bN(X/ W_{H}H_{ij}) 
  \end{tikzcd}
\end{equation}
and then ignoring the now superfluous index $(H_i)$.  Here $X$ has an action by $W_HH_{ij}$ and $W_{H_i}H_{ij} \subset W_HH_{ij}$; the first arrow is induced by the resulting quotient  map.
\end{exmp}

Algebras over $\mathbb{N}^{\mathrm{pre}}$ can be explicitly described as follows.
\begin{prop} \label{prop:alg-N}
  An $\mathbb{N}^{\mathrm{pre}}$-algebra $\sX$ consists of commutative topological monoids
  $\sX(G/H)$ for $H \subset G$, isomorphisms of monoids $c_g: \sX(G/H) \rtarr
  \sX(G/gHg^{-1})$, and $W_H K$-equivariant maps $t_K^H: \sX(G/K) \rtarr \sX(G/H)$ for
  $K \subset H$ (where $W_H K$ acts trivially on $\sX(G/H)$), such that
\begin{enumerate}[(i)]
\item \label{item:alg-N-1}The map $t_K^H$ is a map of commutative monoids.
\item \label{item:alg-N-plus} $c_g\circ c_h = c_{gh}$ and $c_g \circ t_K^H =
  t_{gKg^{-1}}^{gHg^{-1}} \circ c_g$.
\item \label{item:alg-N-2}For $K,H_i \subset H$ and $K_{ij} \subset K$  as in
  \autoref{eq:pull-back-Mackey},
 the diagram below (whose vertical maps are contravariant
  maps of the presheaf) is commutative
\begin{equation}\label{eq:compatibility}
  \begin{tikzcd}
    \sX(G/H_i) \ar[rr,"t_{H_i}^H"] \ar[d,"\varphi^{*}"'] & & \sX(G/H) \ar[d,"\phi^{*}"] \\
    \prod_i\sX(G/K_{ij}) \ar[r,"\prod_it_{K_{ij}}^K"'] & \prod_i\sX(G/K) \ar[r,"\text{mult}"'] & \sX(G/K)
  \end{tikzcd}
\end{equation}
\item \label{item:alg-N-3} For $L \subset K \subset H$, $t_K^H \circ t_L^K = t_L^H$, and similarly for inclusions of subconjugates.
\end{enumerate}
\end{prop}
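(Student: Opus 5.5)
We will decode the structure map $\theta\colon \mathbb{N}^{\mathrm{pre}}\sX\rtarr \sX$ of an algebra using the explicit formula of \autoref{ex:Npre},
$$\mathbb{N}^{\mathrm{pre}}\sX(G/H) = \prod_{(H_i)\subset H} \mathbb{N}\big(\sX(G/H_i)/W_H H_i\big),$$
and read off the data from its restriction to the various factors. The conjugations $c_g$ need no construction: they are the presheaf maps induced by the isomorphisms $G/gHg^{-1}\rtarr G/H$, $x\mapsto xg$, so $c_gc_h=c_{gh}$ is just contravariant functoriality of $\sX$.

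The remaining data are extracted factor by factor. Restrict $\theta_{G/H}$ to the factor indexed by $(H_i)=(H)$, where $W_HH$ is trivial. This gives $\mathbb{N}\sX(G/H)\rtarr\sX(G/H)$. The unit axiom (via $\et$, induced by $\id\in\sN(1)$) together with the associativity axiom specialized to this factor (the last arrow in \autoref{eq:NpreComp} with all subgroups equal to $H$) say precisely that this is an $\mathbb{N}$-algebra structure, i.e.\ a commutative monoid structure on $\sX(G/H)$. Similarly, restricting to words of length one in the factor indexed by $(K)$ gives a map $\sX(G/K)/W_HK\rtarr \sX(G/H)$, i.e.\ a $W_HK$-invariant map $t_K^H$.

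Conversely, given the data, $\theta_{G/H}$ is defined as the composite of $\prod_{(H_i)}\mathbb{N}(t_{H_i}^H)$ and multiplication. This is well defined on $W_HH_i$-orbits by invariance of $t$, and independent of representatives of $(H_i)$ by \ref{item:alg-N-plus}. So the two descriptions determine each other once the axioms are matched.

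Next we match axioms. Associativity of $\theta$, computed through \autoref{eq:NpreComp0} and \autoref{eq:NpreComp}, evaluated on the nested factor $(H_{ij})\subset(H_i)\subset H$, splits into two statements.
\begin{enumerate}[(a)]
\item On length-one words it gives $t^H_{H_i}t^{H_i}_{H_{ij}}=t^H_{H_{ij}}$, which is \ref{item:alg-N-3}.
\item On the inner $\mathbb{N}$ it gives that $t$ commutes with multiplication, which is \ref{item:alg-N-1}.
\end{enumerate}
The equivariance conditions in \ref{item:alg-N-plus} follow from naturality of $\theta$ with respect to the isomorphisms $c_g$.

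Finally, the fact that $\theta$ is a map of presheaves, i.e.\ naturality with respect to $\phi\colon G/K\rtarr G/H$ with $K\subset H$, uses the description of $\phi^*$ on $\mathbb{N}^{\mathrm{pre}}\sX$ via the double coset pullback \autoref{eq:pull-back-Mackey}. On the $(H_i)$ factor it is exactly the diagram \autoref{eq:compatibility}, which is \ref{item:alg-N-2}. Conversely, \ref{item:alg-N-1}--\ref{item:alg-N-3} give back associativity, unit and naturality by reading the same computations backwards.

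The main obstacle is this naturality step. One has to verify carefully that $\phi^*$ on $\mathbb{N}^{\mathrm{pre}}\sX$, defined through pullback of $G$-sets over $G/H$ (\autoref{SstarDefn}) and the section functor, decomposes the orbit $G/H_i$ into the orbits $G/K_{ij}$ indexed by $H_i\backslash G/K$. One must also check that the resulting transfers land in the factor indexed by $(K)$, so that they combine with the multiplication on $\sX(G/K)$. This requires tracking the identifications of \autoref{phistar2} with the conjugations $\ga_g$, and this is where \ref{item:alg-N-plus} for subconjugates enters.
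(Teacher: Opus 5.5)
Your proposal is correct and is essentially the paper's own argument. You get the monoid structure from the $(H)$ factor and $t_K^H$ from length-one words in the $(K)$ factor (the paper's $T_K^H\circ\iota_K^H$). Conversely you rebuild the structure map as $\prod\mathbb{N}(t_{H_i}^H)$ followed by multiplication. You then match unit, associativity on nested factors, and presheaf naturality (for inclusions and for $G/gHg^{-1}\to G/H$) against (i)--(iv), which is exactly what the paper does.

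Your derivation of (i) from the nested factor $(K)\subset(K)\subset H$, where the inner $\mathbb{N}$ is multiplication on $\sX(G/K)$, is in fact needed. It supplements the instance $H_i=H$ that the paper invokes, which only shows that $T_K^H$ on a word is the product of the values of $t_K^H$. So on this point, and on independence of conjugacy-class representatives, your account is slightly more careful than the paper's.
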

\begin{proof}
First suppose that we have commutative monoid structures on the $\sX(G/H)$ 
and $W_HK$-maps $t_K^H$
such that (\ref{item:alg-N-1}) through (\ref{item:alg-N-3}) are
satisfied. For $H_i \subset H$, the map $t_{H_i}^H$ determines a map
\begin{equation}
\label{eq:comm1}
\xymatrix@1{T_{H_i}^{H}: \mathbb{N}(\sX(G/H_i)/{W_H H_i}) \ar[r]^-{\mathbb{N}(t_{H_i}^{H})} & \mathbb{N}(\sX(G/H)) \ar[r]^-{\mathrm{mult}} & \sX(G/H).\\}
\end{equation}
By \autoref{ex:Npre}, these maps determine a map
\begin{equation}\label{eq:comm2}
\xymatrix@1{\mathbb{N}^{\mathrm{pre}} \sX(G/H)  \ar[rr]^-{\prod_i\mathbb{N}(T_{H_i}^H)} & &  \prod_{(H_i) \subset H}\mathbb{N}\sX(G/H) \ar[r]^-{\mathrm{mult}}& \sX(G/H).\\}
\end{equation}
For $K \subset H$, we also have the map
\begin{equation*}
\xymatrix@1{ \iota_{K}^{H}: \sX(G/K) \ar[rr]^-{\text{ quotient }} & & \sX(G/K)/{W_H(K)} \ar[rr]^-{\text{inclusion}}  & & \mathbb{N}(\sX(G/K)/{W_H(K)}),\\}
\end{equation*}
and the unit map $\sX \rtarr \mathbb{N}^{\mathrm{pre}}\sX$ is induced by $\iota_H^H$.
It suffices to check that these maps give an $\bN^{pre}$-algebra structure map.  This means that they satisfy the following
conditions.  We omit the easy verifications.  
\begin{enumerate}
\item \label{item:Nalg1}(unital) The following diagram is commutative.
 $$\xymatrix{
\sX(G/H) \ar[r]^-{ \iota_H^H} \ar@{=}[dr]& \mathbb{N}(\sX(G/H)) \ar[d]^{T_H^H} \\
& \sX(G/H).\\}$$
\item \label{item:Nalg2}(associative) For $H_{ij} \subset H_i \subset H \subset G$, the following diagram is commutative.
\begin{equation*}
  \begin{tikzcd}
      \mathbb{N} \bigg(
    \mathbb{N} \big(\sX(G/H_{ij}) / W_{H_i}(H_{ij})\big) / W_{H}(H_{i}) \bigg)
    \ar[r,"\autoref{eq:NpreComp}"] \ar[d, " \mathbb{N}(T_{H_{ij}}^{H_i})"']&
    \mathbb{N} \big(\sX(G/H_{ij}) / W_{H}(H_{ij}) \big)  \ar[d,"T_{H_{ij}}^H"]
    \\
    \mathbb{N} \bigg(\sX(G/H_{i}) / W_{H}(H_{i}) \bigg)
    \ar[r,"T_{H_i}^H"'] &
    \sX(G/H)
  \end{tikzcd}
\end{equation*}
\item \label{item:Nalg3} (map of presheaves) For $H, K, K_{ij}$ as in 
  \autoref{eq:pull-back-Mackey}, the
  following diagram (where the vertical maps are contravariant maps of the
  presheaf) is commutative.
\begin{equation*}
  \begin{tikzcd}
    \mathbb{N} \bigg(\sX(G/{H_i}) / W_{H}({H_i}) \bigg)
    \ar[rr,"T_{H_i}^H"] \ar[d," \mathbb{N}(\varphi^{*})"'] & & \sX(G/H) \ar[d,"\phi^{*}"] \\
    \prod_i\mathbb{N} \bigg(\sX(G/K_{ij}) / W_{K}(K_{ij}) \bigg) \ar[r,"\prod_iT_{K_{ij}}^K"'] & \prod_i\sX(G/K) \ar[r,"\text{mult}"'] & \sX(G/K)  
  \end{tikzcd}
\end{equation*}
\end{enumerate}
For the converse, suppose that $\sX$ is an $\mathbb{N}^{\mathrm{pre}}$-algebra.  With the notations at the start of \autoref{ex:Npre}, we then have structure maps
\begin{equation}
\label{eq:comm-T}
T_{H_i}^{H}: \mathbb{N}(\sX(G/H_i)/{W_H(H_i)})\rtarr \sX(G/H)
\end{equation}
for each $H_i\subset H$. Taking $H_i=H$, this gives $\sX(G/H)$ its commutative monoid structure.  
Define the $W_HK$-map $t_K^H\colon \sX(G/K) \rtarr \sX(G/H)$ to be the composite $T_K^H \circ \iota_K^H$.
We must verify statements (\ref{item:alg-N-1}) through (\ref{item:alg-N-3}), and we have
the statements  (\ref{item:Nalg1}), (\ref{item:Nalg2}), and (\ref{item:Nalg3}) by hypothesis.
Taking $H_i = H$ in (\ref{item:Nalg2}) and precomposing the diagram on
top by $\mathbb{N}(\iota_{H_{ij}}^H)$ and $\iota_{H_{ij}}^H$, we see that
$t_{H_{ij}}^H$ is a map of commutative monoids. This proves
(\ref{item:alg-N-1}).   Taking $\phi$ to be $G/gHg^{-1} \to G/H$ in (\ref{item:Nalg3}), we have $c_g T_K^H = T_{gKg^{-1}}^{gHg^{-1}} c_g$. Naturality gives $c_g \iota_K^H = \iota_{gKg^{-1}}^{gHg^{-1}} c_g$. This proves (\ref{item:alg-N-plus}).
Precomposing the diagram in (\ref{item:Nalg3}) on the left
with $\iota_{H_i}^H$ and $\iota_{K_{ij}}^K$ proves (\ref{item:alg-N-2}). Precomposing
the diagram in (\ref{item:Nalg2}) with $\iota$ on top and left proves (\ref{item:alg-N-3}).
 \end{proof}
 
 \subsection{Mackey functors}
 Mackey functors have many equivalent definitions.  We take the definition that a Mackey functor  is a covariant transfer functor and a contravariant restriction functor (with the same values on objects)  from the category of finite $G$-sets to the category of abelian groups that is additive (takes disjoint unions to direct sums) and takes pushouts to appropriate commutative diagrams (e.g. \cite[p. 209]{EHCT}).
 
 Adding disjoint basepoints to finite $G$-sets, we see that if we have a Mackey functor $M$, its restriction maps 
give an orbital presheaf $\sX_M$, and its transfer maps satisfy 
\autoref{prop:alg-N}(\ref{item:alg-N-1})-(\ref{item:alg-N-3}).  Since the $M(G/H)$ are abelian groups, not just monoids, $\sX_M$ is grouplike.  Conversely, if $\sX$ is a discrete grouplike $\bN^{pre}$-algebra, so that each $\sX(T)$ is a set viewed as a space and is an abelian group, then it gives a Mackey functor via these conditions.  The essential point is that passage from the pullback diagram 
\autoref{eq:pull-back-Mackey} to the commutative diagram \autoref{eq:compatibility} is just a disguised version of the double coset formula. 
This gives a new characterization of Mackey functors.

 \begin{thm}\label{HM}
  \label{cor:MackeyN} The category of Mackey functors is isomorphic to the category of discrete grouplike $\mathbb{N}^{\mathrm{pre}}$-algebras.
  \end{thm}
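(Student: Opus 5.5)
The plan is to reduce everything to \autoref{prop:alg-N}, which already translates $\bN^{pre}$-algebra structure into elementary data: commutative monoids $\sX(G/H)$, conjugations $c_g$, restrictions given by the presheaf structure, and transfers $t_K^H$ satisfying (i)--(iv). So I will build functors in both directions between Mackey functors and discrete grouplike $\bN^{pre}$-algebras, and check that they are mutually inverse on objects and morphisms. I will use the definition of a Mackey functor as a pair of additive functors on finite $G$-sets (covariant transfers, contravariant restrictions) satisfying the pullback (double coset) axiom. Additivity lets me replace finite $G$-sets by orbits, so a Mackey functor is determined by its values $M(G/H)$, the restrictions and conjugations along $G$-maps of orbits, and the transfers along the projections $G/K\rtarr G/H$ for $K\subset H$.

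From Mackey functors to algebras: given $M$, take $\sX_M(G/H)=M(G/H)$ as discrete abelian groups, with presheaf structure given by restrictions along orbit maps. Conjugations $c_g$ are restriction along the isomorphism $G/gHg^{-1}\rtarr G/H$, and $t_K^H$ is the transfer along $G/K\rtarr G/H$. Then I check (i)--(iv) of \autoref{prop:alg-N}.
\begin{enumerate}[(i)]
\item Transfers are homomorphisms, so they are monoid maps.
\item $c_gc_h=c_{gh}$ and $c_g t=t c_g$ follow from functoriality of transfer and restriction along orbit isomorphisms.
\item $W_HK$-equivariance of $t_K^H$ comes from invariance of transfer under automorphisms of $G/K$ over $G/H$.
\item Transitivity is covariant functoriality.
\end{enumerate}
The main point is (iii): I apply the Mackey pullback axiom to the square \autoref{eq:pull-back-Mackey}, with the pullback decomposed as $\amalg_j G/K_{ij}$. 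Additivity turns the transfer out of the disjoint union into the sum (the map labeled ``mult'') of the $t_{K_{ij}}^K$, giving exactly \autoref{eq:compatibility}. Grouplikeness is automatic. \autoref{prop:alg-N} then yields a discrete grouplike $\bN^{pre}$-algebra.

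From algebras to Mackey functors: given a discrete grouplike $\sX$, each $\sX(G/H)$ is a commutative monoid whose $\pi_0$ is a group, hence an abelian group. The restriction maps are monoid homomorphisms, since they are the presheaf maps in an $\bN^{pre}$-algebra (compatibility of the structure map with $\phi^*$, i.e.\ \autoref{eq:compatibility} with $H_i=H$). I define $M$ on orbits by $\sX$, extend additively to finite $G$-sets, and define the transfer along a general orbit map $G/K\rtarr G/H$ as $t$ composed with a conjugation $c_g$. Condition (ii), together with $W_HK$-equivariance, shows this is well defined independent of the choice of $g$. Condition (iv) gives functoriality. Additivity handles maps between general finite $G$-sets by decomposing them into orbit maps. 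The pullback axiom reduces by additivity to pullbacks of orbit maps over an orbit, which is (iii) in disguise.

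Morphisms correspond on both sides to families of maps commuting with restriction, conjugation and transfer. By \autoref{prop:alg-N}, an $\bN^{pre}$-algebra map is a presheaf map commuting with the structure maps $T_{H_i}^H$, equivalently with the $t_K^H$ and monoid products. The two constructions are visibly inverse, giving an isomorphism, not merely an equivalence, of categories.

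I expect the main obstacle to be matching the general Mackey pullback axiom, stated for arbitrary finite $G$-sets and maps that need not be inclusion-induced, with the single orbitwise square \autoref{eq:compatibility}. That matching needs careful bookkeeping of conjugations and of how the pullback $\amalg_j G/K_{ij}$ is identified via double cosets $H_i\backslash G/K$. I would first reduce, using additivity and the $c_g$ compatibilities, to the case where both maps are projections induced by subgroup inclusions $H_i\subset H\supset K$, and then identify the pullback orbitwise using \autoref{phistar2}.
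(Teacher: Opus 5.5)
Your proposal is correct and follows the paper's own route: reduce to the elementary data of \autoref{prop:alg-N} and recognize \autoref{eq:compatibility} as the double coset formula for the pullback \autoref{eq:pull-back-Mackey}, with much more of the conjugation and additivity bookkeeping written out than the paper gives. One small correction: the fact that the restriction maps of a $\mathbb{N}^{\mathrm{pre}}$-algebra are monoid homomorphisms comes from compatibility of the structure maps $T^H_H$ and $T^K_K$ with $\phi^*$ (the ``map of presheaves'' condition in the proof of \autoref{prop:alg-N}, taken with $H_i=H$), not from \autoref{eq:compatibility}, which is a tautology when $H_i=H$; keep this check explicit, since it does not follow from conditions (i)--(iv) as listed there.
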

  
Since $\sN$ is the terminal operad, for any group $G$ and any operad $\sC$ of $G$-spaces, there is a map of operads $\sC \rtarr \sN$.  It induces a map of monads $\Cp \rtarr \mathbb{N}^{\mathrm{pre}}$. Pullback along this map gives a forgetful functor 
$$\mathbb{N}^{\mathrm{pre}}\big[\OG^{op}[\sT]\big] \rtarr \Cp\big[\OG^{op}[\sT]\big].$$
Taking $\sC$ to be an $E_{\infty}$ operad, an immediate application of the functor $\mathbb{E}^{\mathrm{pre}}$ from \autoref{thm:pre-machine} gives us Eilenberg-MacLane $G$-spectra.

\begin{thm}\label{HM1} Let $M$ be a Mackey functor. Then 
  $$\ul\pi_{n}\mathbb{E}^{\mathrm{pre}}M = \begin{cases}
M & n= 0 \\
0 &  n\neq 0
\end{cases}$$
Thus $\bE^{\mathrm{pre}}M$ is an Eilenberg-Mac\,Lane $G$-spectrum $HM$.  
\end{thm}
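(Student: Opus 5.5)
The plan is to regard the Mackey functor $M$ as a $\Cp$-algebra and then read off its homotopy Mackey functors from \autoref{thm:pre-machine}. By \autoref{cor:MackeyN}, $M$ is a discrete grouplike $\bN^{pre}$-algebra $\sX_M$ with $\sX_M(G/H)=M(G/H)$. Fix an $E_\infty$ operad $\sC$ that acts naturally on equivariant infinite loop spaces. We may take $\sC$ to be $\SI$-free, which holds for the Steiner operad. The operad map $\sC\rtarr\sN$ induces a monad map $\Cp\rtarr\bN^{pre}$, and pulling back along it makes $\sX_M$ a grouplike $\Cp$-algebra. Put $\bE^{pre}M=\bE\,\ourEonAlg\,\sX_M$.

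First I identify the zeroth space. By \autoref{thm:pre-machine}(i), $\bR\ourEonAlg\sX_M$ is levelwise weakly equivalent to $\sX_M$. Since $\sX_M$ is grouplike, so is $\bR\ourEonAlg\sX_M$, because $\pi_0$ is preserved. Hence \autoref{thm:pre-machine}(ii) says $\zeta^{pre}$ is a levelwise weak equivalence. Composing gives a zigzag of levelwise weak equivalences
\[
\sX_M \simeq \bR\ourEonAlg\sX_M \simeq \bR\OM^\infty\bE^{pre}M .
\]
Evaluating at $G/H$, the space $(\OM^\infty\bE^{pre}M)^H$ is weakly equivalent to the discrete space $M(G/H)$. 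Therefore $\pi_n^H\bE^{pre}M=\pi_n\big((\OM^\infty \bE^{pre}M)^H\big)=0$ for $n>0$. The spectrum $\bE X$ is connective for every $\bC$-algebra $X$, which is part of the content of \autoref{machine} and \autoref{equiv}. So $\pi_n^H$ also vanishes for $n<0$, and $\pi_0^H\bE^{pre}M\iso M(G/H)$ as sets.

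It remains to show that $\ul\pi_0\bE^{pre}M\iso M$ as Mackey functors, not merely levelwise. I will check three pieces of structure in turn.

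\begin{enumerate}[(a)]
\item \emph{Restrictions.} These are the maps of $\bR\OM^\infty$ induced by $G/K\rtarr G/H$. They agree with those of $\sX_M$ because the zigzag above consists of maps of orbital presheaves.
\item \emph{Addition.} The group structure on $\pi_0^H$ of a spectrum agrees with the one induced by any $E_\infty$ action on $\OM^\infty$. The zigzag consists of $\Cp$-algebra maps, or of maps compatible with them up to the homotopy inverse in the remark after \autoref{machine}. Hence the additions agree with those of $M(G/H)$, which are induced by the $\bN^{pre}$-structure.
\item \emph{Transfers.} The transfers on $\ul\pi_0$ of a genuine $G$-spectrum are detected on the zeroth space by the action of the components of $\sC(G/K)^H$, that is of $\sCpre_{G/H}$ evaluated on the $G$-set $G/K$ over $G/H$. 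Via $\sC\rtarr\sN$, the $\Cp$-action on $\sX_M$ carries exactly these components to the maps $t_K^H$ of \autoref{prop:alg-N}, which are the transfers of $M$.
\end{enumerate}

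Since Mackey functor structure is determined by restrictions, transfers and addition, this gives $\ul\pi_0\bE^{pre}M\iso M$. Naturality in $M$ follows from functoriality of every construction used.

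I expect step (c) to be the main obstacle. It needs the comparison that the equivariant operadic machine realizes the space-level transfers coming from the $E_\infty$ $G$-operad structure as the spectrum-level transfers. I would cite this from \cite{GM3}, or deduce it through \autoref{equiv} and a check on the Steiner operad, where both kinds of transfer come from the same Pontryagin–Thom collapse along embeddings of $G/K$ into $V$.
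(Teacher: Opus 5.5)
Your proposal follows the paper's proof: regard $M$ as a grouplike $\Cp$-algebra via $\Cp\rtarr\bN^{pre}$, apply \autoref{thm:pre-machine} to get $\bR\OM^{\infty}\bE^{pre}M\simeq M$ levelwise, and then use discreteness and connectivity to kill $\ul{\pi}_n$ for $n\neq 0$. Your checks (a)--(c) make explicit the compatibility with restrictions, addition, and transfers that the paper's one-line proof leaves implicit. For (b) and (c) you should use a zigzag of $\Cp$-algebra maps, such as $\bR\ourEonAlg\sX\iso B(\Cp\bR\ourE,\Cp,\sX)\rtarr B(\Cp,\Cp,\sX)\rtarr\sX$ induced by $\Cp\nu$, which is a map of right $\Cp$-functors by \autoref{ladiag} and \autoref{Acom}. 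Do not use the zigzag in the definition of $\ourEonAlg$: it passes through $\bR\ourE\sX$, a presheaf with no evident $\Cp$-algebra structure.
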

\begin{proof}
By \autoref{thm:pre-machine}, the discreteness and grouplike properties give
$$ M= \ul{\pi}_0(M) = \ul{\pi}_{*}(M) \cong \ul{\pi}_*(\Omega^{\infty}\mathbb{E}^{\mathrm{pre}}M) \cong \ul{\pi}_*(\mathbb{E}^{\mathrm{pre}}M). \qedhere $$
\end{proof}

\begin{rem} A multiplicative refinement is work in progress.  We intend to prove that $\bE^{pre}M$ is an $E_{\infty}$ ring $G$-spectrum when $M$ has appropriate multiplicative structure.  We hope to obtain  
new interpretations of Green and Tambara functors.
\end{rem}

Speculating on their future interest, we record the following definition and theorem, which delete the discreteness condition.  They give the equivariant analog of topological abelian groups together with their associated equivariant spectra.

\begin{defn}  A topological Mackey functor is a grouplike $\bN^{pre}$-algebra.
\end{defn}

\begin{thm}\label{HMTop} $\bE^{pre}$ specializes to give a functor from topological Mackey functors to $G$-spectra such that $\ul{\pi}_*(\bE^{pre}M)$ is isomorphic to $\ul{\pi}_*(M)$.
\end{thm}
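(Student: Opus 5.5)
The plan is to apply \autoref{thm:pre-machine} to the pullback of $M$ along the map of monads $\Cp\rtarr \bN^{pre}$, just as in the proof of \autoref{HM1}, but now without discreteness. Fix an $E_\infty$ $G$-operad $\sC$ that acts naturally on equivariant infinite loop spaces; the Steiner operad will do, and it is $\SI$-free. The unique map of operads $\sC\rtarr\sN$ induces a map of monads $\Cp\rtarr\bN^{pre}$. Pulling back along it regards a topological Mackey functor $M$, which is a grouplike $\bN^{pre}$-algebra, as a $\Cp$-algebra. Grouplikeness is a condition only on the $\pi_0M(G/H)$, so it survives this pullback. We then define the functor on topological Mackey functors to be the composite of this forgetful functor with $\bE^{pre}$. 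Functoriality is immediate because both pieces are functors.

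To compute homotopy, apply \autoref{thm:pre-machine}. Part (i) gives a levelwise weak equivalence $\bR\sE_{\bC}M\simeq M$. Part (ii) says that the natural map $\zeta^{pre}\colon\bR\sE_{\bC}M\rtarr \bR\OM^\infty\bE^{pre}M$ is a group completion, and hence a levelwise weak equivalence, since $M$ is grouplike. Combining these, for each $H$ we get
\[ \pi_n(M(G/H))\iso \pi_n((\OM^\infty\bE^{pre}M)^H)\iso \pi_n^H(\bE^{pre}M),\quad n\geq 0, \]
where the last isomorphism holds because $(\OM^\infty E)^H=(E_0)^H$ and $\pi_n^H E=\pi_n(E_0^H)$ for $n\geq 0$ in Lewis--May $G$-spectra. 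The $G$-spectrum $\bE^{pre}M$ is connective, so its negative homotopy groups vanish, and we interpret $\ul\pi_*(M)$ as $\ul\pi_n(M)(G/H)=\pi_n(M(G/H))$ for $n\geq 0$.

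The main point to check is compatibility with the Mackey structure, meaning that the isomorphisms above assemble into isomorphisms of Mackey functors, not merely of groups levelwise. Compatibility with restrictions is immediate, because all the maps are natural maps of orbital presheaves and the equivalence in (i) is levelwise natural. Compatibility with transfers is subtler. On the spectrum side, transfers in $\ul\pi_*(\bE^{pre}M)$ come from the genuine $G$-spectrum structure. On $\ul\pi_*(M)$, they come from the maps $t_K^H$ of \autoref{prop:alg-N}, which are induced by the $\Cp$-algebra structure through $\bN^{pre}$. To compare them, I would argue that all maps in play are maps of $\Cp$-algebras, or zigzags of such maps as in the Remark after \autoref{machine}. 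I would also use that the transfers on $\bR\OM^\infty E$ are encoded by its $\Cp$-algebra structure coming from \autoref{Acom}, because the Steiner operad action realizes the Pontryagin--Thom transfers. If that identification proves delicate, a fallback is to state the theorem as levelwise isomorphisms compatible with restrictions. In the discrete case, the full Mackey structure is then already determined by \autoref{HM1}.
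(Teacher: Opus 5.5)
Your proposal is correct and follows the paper's proof. You regard $M$ as a grouplike $\Cp$-algebra via $\Cp\rtarr\bN^{pre}$, then use \autoref{thm:pre-machine} to see that the group completion $M\simeq\bR\sE_{\bC}M\rtarr\bR\OM^{\infty}\bE^{pre}M$ of grouplike monoids is a levelwise weak equivalence, which is exactly the paper's one-line argument. The paper establishes only this levelwise equivalence of orbital presheaves, which gives compatibility with restrictions, and says nothing about transfers, so your extra discussion of that point goes beyond what the paper proves.
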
 
\begin{proof}
Since $M \rtarr \OM^{\infty}\bE^{pre}M$ induces a levelwise homology isomorphism of grouplike commutative topological monoids, it is a levelwise weak equivalence.
\end{proof}

\section{Unit $G$-spectra of $E_{\infty}$ ring $G$-spectra}\label{units}
In this section, we apply the presheaf infinite loop space machine to construct the
unit spectra of equivariant commutative ring spectra, alias $E_{\infty}$ ring $G$-spectra.

\subsection{Nonequivariant unit spectra}\label{unitsp}
We again use Lewis-May spectra, so that they have well-structured zeroth spaces.
We first recall some non-equivariant definitions.
For a ring spectrum $R$, its zeroth space $R_0$ inherits a ring structure and is
therefore a ring space.
Define $GL_1(R)$ to be the subspace of $R_0$ consisting of the unit components, namely the pullback  
$$
\begin{tikzcd}
GL_1(R)\ar[r]\ar[d] \arrow[dr, phantom, "\lrcorner"', very near start] & R_0\ar[d] \\	
\pi_0(R)^\times\ar[r] & \pi_0(R),
\end{tikzcd}
$$
where $(-)^\times$ denotes the subset of units in the ring $\pi_0(R)$.
When $R$ is an $\mathrm{E}_{\infty}$-ring spectrum, $R_0$ is an $\mathrm{E}_\infty$-ring
space, so that $GL_1(R)$ is also an $\mathrm{E}_{\infty}$-space. This is so since the
multiplicative operad action on $R_0$ restricts to give an equivariant $E_\infty$-structure on
$GL_1(R)$. Consequently one can apply the infinite loop space machine to obtain
a spectrum $gl_1(R)$, the \emph{unit spectrum} of $R$, such
that $(gl_1(R))_0\simeq GL_1(R)$.  The original source is \cite[Section IV.3]{MQR} and a more recent source is \cite[Section 2]{Rant2}, both focused on the role of $GL_1R$ in orientation theory. 

\subsection{Unit $G$-spectra}\label{Gunitsp}
Equivariantly, it is not straightforward to define a single $G$-space of units of an $E_{\infty}$ ring $G$-spectrum $R$ whose fixed points capture the equivariant units. We shall use fixed point orbital presheaves.

\begin{defn}
  Let $R$ be an $E_{\infty}$ ring $G$-spectrum.
  Define the unit fixed point presheaf $\ul{GL}_1(R): \OG^{op} \rtarr \sT$ by
  $$\ul{GL}_1(R)(G/H) = {GL_1}((R_0)^H).$$ 
  This is well-defined because the
  restrictions in the Green functor $\ul{\pi}_0(R)$ are ring maps.
  Indeed, we have a pullback diagram of presheaves
\begin{equation}
    \label{eq:defn-GL1}
\begin{tikzcd}
\ul{GL}_1(R)\ar[r]\ar[d] \arrow[dr, phantom, "\lrcorner"', very near start]& \fixpt(R_0)\ar[d] \\	
\ul{\pi}_0(R)^\times\ar[r] & \ul{\pi}_0(R)
\end{tikzcd}
\end{equation}
\end{defn}

Let $\mathscr{C}$ be an $\mathrm{E}_{\infty}$ $G$-operad. The most common choice in this context is 
the linear isometries operad associated to the complete universe $U$, as in \cite{LMS, EKMM}. 
 We have the presheaf monad
$$\Cp: \OG^{op}[\sT] \rtarr \OG^{op}[\sT].$$
A $\sC$-algebra in $G$-spectra is an $\mathrm{E}_{\infty}$ ring
$G$-spectrum. Let $R$ be such a $G$-equivariant $\sC$-spectrum, so that $R_0$ is a
$\sC$-algebra in $G\sT$. 
\begin{thm}
\label{thm:units_spec_LU-algebra}
$\ul{GL}_1(R)$  is a $\Cp$-algebra.   
\end{thm}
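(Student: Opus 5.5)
We will deduce the statement from \autoref{Acom} applied to the $\bC$-algebra $R_0$, and then show that the $\Cp$-algebra structure on $\bR(R_0)$ restricts to the subpresheaf $\ul{GL}_1(R)$. Here the relevant $\bC$-action on $R_0$ is the multiplicative one, given by the $\sC$-algebra structure of the $E_\infty$ ring $G$-spectrum $R$ on zeroth spaces. So \autoref{Acom} provides a structure map
$$\theta\colon \Cp\bR(R_0)\iso \bR\bC(R_0)\rtarr \bR(R_0).$$
By the pullback \autoref{eq:defn-GL1}, $\ul{GL}_1(R)$ is a subpresheaf of $\bR(R_0)$: each $\ul{GL}_1(R)(G/H)$ is a union of path components of $R_0^H$, and restrictions preserve it because the restrictions of the Green functor $\ul\pi_0(R)$ are ring maps.

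The first step is to check that $\Cp$ preserves subpresheaf inclusions that are levelwise unions of components. We would show that $\Cp\ul{GL}_1(R)\rtarr \Cp\bR(R_0)$ is levelwise an inclusion. Use the description in \autoref{Cpre2} together with \autoref{SecWed}: $(\bS_{G/H}\sX)(T)$ is a product of the spaces $\sX(G/K_i)$ over the orbits of $T$, so $\bS_\star$ preserves such inclusions. The coequalizer forming $\Cp$ first passes to orbits under free $\SI_T$-actions, since $\sC$ is $\SI$-free, and then imposes basepoint identifications. The basepoint of each $\sX(G/K)$ is the unit $1$, which lies in $GL_1$, so these identifications never leave the subobject. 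Hence the induced map of quotients is injective, and it is a subspace inclusion.

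The main step is to show that $\theta$ carries $\Cp\ul{GL}_1(R)(G/H)$ into $GL_1(R_0^H)$. Since the target is a union of components, it suffices to work on $\pi_0$. An element is represented by an operation $c\in\sC^{pre}_{G/H}(T)=\sC(|T|)^{\GA_\be}$ together with units $x_i\in GL_1(R_0^{K_i})$, one for each orbit $G/K_i$ of $T$. Under the identification of \autoref{prop:phi-C-commute}, this element corresponds to $c$ applied to the section in $(R_0^{|T|})^{\GA_\be}\iso F_G(T,R_0)$ determined by the $x_i$. On $\pi_0$, the multiplicative action of $c$ gives the product over the orbits of the \emph{norms} $N_{K_i}^H$ of the classes $[x_i]$ in the multiplicative structure of $\ul\pi_0(R)$. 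The main obstacle is to justify this identification, namely that $E_\infty$ multiplication along the orbit $G/K_i$ induces on $\pi_0$ the multiplicative norm map $\ul\pi_0(R)(G/K_i)\rtarr\ul\pi_0(R)(G/H)$. Since $\sC$ is $E_\infty$, the space $\sC(|T|)^{\GA_\be}$ is contractible, so the class is independent of $c$ and can be computed for any one choice. Norms are multiplicative and unital, so the norm of a unit is a unit with inverse $N(x^{-1})$, and products of units are units. Therefore the image lies in $\ul\pi_0(R)(G/H)^\times$, and by the pullback it lies in $\ul{GL}_1(R)(G/H)$.

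Finally, the unit and associativity diagrams for $\ul{GL}_1(R)$ hold because they hold for $\bR(R_0)$ by \autoref{Acom}, and the inclusions of the first step are monomorphisms. Compatibility with the presheaf maps $\ph^*$ holds for the same reason, since $\theta$ is a map of orbital presheaves.
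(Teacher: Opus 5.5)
Your argument is correct, but it is packaged differently from the paper's. The paper never restricts the structure map by hand. It notes that the other three corners of the pullback square \autoref{eq:defn-GL1} are $\Cp$-algebras: $\bR(R_0)$ by \autoref{Acom}, and $\underline{\pi}_0(R)$ and $\underline{\pi}_0(R)^{\times}$ as discrete $\mathbb{N}^{\mathrm{pre}}$-algebras pulled back along $\Cp\to\mathbb{N}^{\mathrm{pre}}$. It then concludes because limits of $\Cp$-algebras are created in orbital presheaves.

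You instead verify directly that $\underline{GL}_1(R)$ is a sub-$\Cp$-algebra of $\bR(R_0)$. Both routes rest on the same fact, which the paper leaves implicit. For the paper's square to consist of $\Cp$-maps, $\underline{\pi}_0(R)$ must carry the \emph{multiplicative} $\mathbb{N}^{\mathrm{pre}}$-structure induced from $R_0$. Since it is not grouplike under multiplication, \autoref{prop:alg-N} rather than \autoref{cor:MackeyN} is what applies to it. In addition, $\underline{\pi}_0(R)^{\times}$ must be closed under its transfers, and that is exactly your $\pi_0$ step. The paper's route is shorter and also records that the maps out of $\underline{GL}_1(R)$ in the square are $\Cp$-maps. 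Yours makes explicit where the connectivity of the fixed-point operation spaces, and the fact that the basepoint is $1$, are used.

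Two remarks tighten your version. First, your step showing that $\Cp$ preserves the inclusion is not needed. It suffices that the composite $\Cp\underline{GL}_1(R)\to\Cp\bR(R_0)\to\bR(R_0)$ factors through $\underline{GL}_1(R)$. The unit, associativity, and presheaf compatibilities then follow by naturality, because $\underline{GL}_1(R)\to\bR(R_0)$ is a monomorphism; injectivity of $\Cp$ on the inclusion plays no role.

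Second, the step you flag as the main obstacle, identifying the $\pi_0$-operations with Hill--Hopkins--Ravenel norms, can be bypassed, since you only use unitality and multiplicativity. Each $\sC^{pre}_{G/H}(T)$ is connected, so $\pi_0$ turns the $\Cp$-algebra $\bR(R_0)$ into a discrete $\mathbb{N}^{\mathrm{pre}}$-algebra. By \autoref{prop:alg-N}(i), its transfers $t^H_K$ are maps of commutative monoids, and they are unital since they are based and the basepoint is $1$. The structure map is a product of such transfers as in \autoref{eq:comm2}, so units go to units.

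You can also see this directly from the operad. After the evident shuffle, $\gamma(c_2;c_T,c_T)$ and $\gamma(c_T;c_2,\ldots,c_2)$ lie in the same connected fixed-point space. This gives $N(xy)=N(x)N(y)$ on $\pi_0$ for the operation $N$ determined by $c_T$, and $N(1)=1$ comes from the basepoint identifications.
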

\begin{proof}
By \autoref{Acom},  $\fixpt(R_0)$  is a $\mathbb{C}^{pre}$-algebra.
 By \autoref{cor:MackeyN}, $\ul{\pi}_0(R)$ and $\ul{\pi}^{\times}_0(R)$ are
 $\mathbb{N}^{\mathrm{pre}}$-algebras and therefore  $\bC^{pre}$-algebras.
 It follows that  \autoref{eq:defn-GL1} is a pullback  square of $\bC^{pre}$-algebras. 
\end{proof}


\begin{prop}
  \label{prop:phi-C-alge}
The $G$-space $GL_1(R_0) = \ul{GL}_1(R)(G/e)$ is a $\sC$-space.
\end{prop}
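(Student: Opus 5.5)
The statement follows by combining \autoref{thm:units_spec_LU-algebra} with \autoref{prop:theta-C}. By \autoref{thm:units_spec_LU-algebra}, $\ul{GL}_1(R)$ is a $\Cp$-algebra in $\OG^{op}[\sT]$. By \autoref{prop:theta-C}, for any $\Cp$-algebra $\sX$ the $G$-space $\bL\sX = \sX(G/e)$ is a $\bC$-algebra. Applied to $\sX = \ul{GL}_1(R)$, this gives $\bL\ul{GL}_1(R) = \ul{GL}_1(R)(G/e) = GL_1((R_0)^e) = GL_1(R_0)$ a $\bC$-algebra, i.e.\ a $\sC$-space.

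The one point needing care is the $G$-action: the $G$-space structure on $\sX(G/e)$ comes from the right action of $\mathrm{Aut}(G/e)\iso G$ on the orbit $G/e$. We must check that this agrees with the restriction of the $G$-action on $R_0$ to the union of unit components, that is, with $GL_1(R_0)\subset R_0$ as a $G$-subspace. This holds because $\bR(R_0)(G/e) = R_0$ with its given action, and $GL_1(R_0)$ is a union of components of $R_0$. That union is $G$-stable, since $G$ acts on $\pi_0(R_0)$ by ring automorphisms, which preserve units. Because the pullback square \autoref{eq:defn-GL1} is a pullback of $\Cp$-algebras, evaluating it at $G/e$ gives a pullback of $G$-spaces, and the $\bC$-action on $GL_1(R_0)$ is the restriction of the multiplicative $\bC$-action on $R_0$.

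The main obstacle is only the bookkeeping inside \autoref{prop:theta-C}: at level $G/e$, the only orbit summands appearing in $(\Cp\sX)(G/e)$ are free orbits. By \autoref{lem:standard}-type identifications (here \autoref{standard} with $H=e$) and \autoref{cuter2}, $(\Cp\sX)(G/e)$ is then $\coprod_n \sC(n)\times_{\SI_n}\sX(G/e)^n/(\sim) = \bC\bL\sX$, as $G$-spaces. We take this from \autoref{prop:theta-C} as stated.
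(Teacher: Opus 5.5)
Your proposal is correct and matches the paper's proof: apply \autoref{prop:theta-C} to the $\Cp$-algebra $\ul{GL}_1(R)$ given by \autoref{thm:units_spec_LU-algebra}, and read off level $G/e$. Your remarks on the $G$-action and on $(\Cp\sX)(G/e)\iso\bC\bL\sX$ only unpack the ``inspection'' left implicit in \autoref{prop:theta-C}, and the paper likewise notes as a direct alternative that $\sC(n)\times GL_1(R_0)^n\rtarr R_0$ lands in $GL_1(R_0)$ because $\pi_0\sC(n)=\ast$ and products of units are units.
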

\begin{proof}
  This is immediate from \autoref{prop:theta-C} and \autoref{thm:units_spec_LU-algebra}. 
  \end{proof}
  
A direct alternative proof may be illuminating. The essential point is that the restriction to  $\sC(n) \times GL_1(R_0)^n$ of the structure map  
 $\sC(n) \times (R_0)^n \rtarr R_0$ factors through the pullback $GL_1(R_0)$. This holds if 
$$
\xymatrix@1{\pi_0\Big(\sC(n) \times GL_1(R_0)^n\Big) \ar[r]^-{\subset} & \pi_0\Big(\sC(n) \times (R_0)^n\Big) \ar[r] & \pi_0(R_0)\\}
$$
factors through $\pi_0(R_0)^{\times}$. Since $\sC$ is an $E_\infty$ operad, $\pi_0(\sC(n)) = *$.  Therefore this holds if and only if 
$$
\xymatrix@1{\Big(\pi_0(R_0)^{\times}\Big)^n \ar[r]^-{\subset} & \Big(\pi_0(R_0)\Big)^n \ar[r]^-{mult} & \pi_0\Big(R_0\Big),\\}
$$
factors through $\pi_0(R_0)^{\times}$, and it does since multiplication in a ring preserves units.

The unit of the adjunction $(\bL,\bR)$ gives a map
$$\eta: \ul{GL}_1(R_0) \rtarr \fixpt \underlying \ul{GL}_1(R) = \fixpt GL_1(R).$$
At $G/H$, $\et$ is a map
\begin{equation}
\label{eq:unit_of_adjunction}
	\eta(G/H):GL_1(R_0^H)\rtarr GL_1(R_0)^H.
      \end{equation}
\begin{exmp} \label{rem:not_equivalence}	
We show by example that the map \autoref{eq:unit_of_adjunction} is in general {\em{not}}  an
 equivalence.  Therefore the underlying $G$-space $GL_1(R_0)$ is in general not the equivariant
 unit space for the ring $G$-spectrum $R$.   We have a commutative diagram
   \begin{equation*}
  \begin{tikzcd}
   GL_1(R_0^H) \ar[r] \ar[d] & R_0^H \ar[d] \\
    GL_1(R_0) \ar[r] & R_0
  \end{tikzcd}
\end{equation*}
Since $GL_1(R_0)^H$ is the pullback  of the bottom and right arrows, 
the map  \autoref{eq:unit_of_adjunction} is an equivalence if and
 only if the diagram is a homotopy pullback.
This is true if and only if the next commutative diagram is a pullback of sets:
\begin{equation*}
  \begin{tikzcd}
    \pi_0^H(R)^{\times} \ar[r] \ar[d] & \pi_0^H(R) \ar[d] \\
    \pi_0(R)^{\times} \ar[r] & \pi_0(R)
  \end{tikzcd}
\end{equation*}
Let $H = G=C_3$, the cyclic group of order $3$, and let $R$ be the sphere spectrum $\mathbb{S}_{G}$.  
Then $\pi_0^{G}( \mathbb{S}_{G})$ is the Burnside ring of $G$, which in this case is  $\mathbb{Z}[x]/(x^2 =3x)$.  Its unit group
is $\pi_0^{G}( \mathbb{S}_{G})^{\times} = \{\pm 1\}$.  But the pullback of the diagram is $\{a+bx|a+3b=\pm 1\}$.
\end{exmp}

\begin{thm}\label{unit}
	Let $R$ be an $\mathrm{E}_{\infty}$-ring $G$-spectrum. Then there is a $G$-spectrum $gl_1(R)$ such that 
	$\fixpt(\OM^\infty gl_1(R))$ is equivalent to $\ul{GL}_1(R)$.
\end{thm}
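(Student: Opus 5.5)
The plan is to apply the presheaf machine of \autoref{thm:pre-machine} to the orbital presheaf $\ul{GL}_1(R)$. By \autoref{thm:units_spec_LU-algebra}, $\ul{GL}_1(R)$ is a $\Cp$-algebra, where $\Cp$ is built from the $E_\infty$ $G$-operad $\sC$ acting on $R_0$. If the linear isometries operad does not itself act naturally on equivariant infinite loop spaces, first pass to the product operad $\sC\times\sK$, with $\sK$ the equivariant Steiner operad. We pull back the $\Cp$-action along the projection $(\sC\times\sK)^{pre}\rtarr\Cp$, and the resulting monad satisfies the hypotheses of \autoref{thm:pre-machine}. This step uses that maps of operads induce maps of the monads $\Cp$, by functoriality of $\ul{\bR}_\star\bP$ and of the construction in \autoref{OGmonad}. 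Then define
$$gl_1(R)=\bE^{pre}\ul{GL}_1(R)=\bE\,\sE_{\bC}\,\ul{GL}_1(R).$$

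Next, I check that $\ul{GL}_1(R)$ is grouplike. At level $G/H$ we have $\pi_0\ul{GL}_1(R)(G/H)=\pi_0^H(R)^{\times}$. This is the group of units of the ring $\pi_0(R_0^H)$, and the monoid structure induced by the operad action is the multiplicative one. That is exactly the point of using the multiplicative $E_\infty$ structure on $R_0$ restricted along the pullback \autoref{eq:defn-GL1}. Hence each $\pi_0$ is a group.

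By \autoref{thm:pre-machine}(ii), the natural group completion $\zeta^{pre}\colon\bR\sE_{\bC}\ul{GL}_1(R)\rtarr\bR\OM^\infty gl_1(R)$ is therefore a levelwise weak equivalence. By \autoref{thm:pre-machine}(i), $\bR\sE_{\bC}\ul{GL}_1(R)$ is levelwise weakly equivalent to $\ul{GL}_1(R)$. Composing gives a zigzag of levelwise weak equivalences between $\bR\OM^\infty gl_1(R)$ and $\ul{GL}_1(R)$. Evaluated at $G/H$, this gives $(\OM^\infty gl_1(R))^H\simeq GL_1(R_0^H)$, compatibly in $H$.

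The main obstacle is not formal. It lies in making sure that the hypothesis of the machine, a natural action on infinite loop spaces, is compatible with the multiplicative operad used to give $\ul{GL}_1(R)$ its structure. That is why I change operads by a product with Steiner and pull back. One must check that the $\pi_0$ monoid structure from the product operad still agrees with ring multiplication. This holds because $\pi_0$ of each operad space is a point, so any operation in arity $2$ induces the same product on $\pi_0$ as the multiplicative operation. \autoref{rem:not_equivalence} explains why one cannot instead apply \autoref{machine} to the $\sC$-space $GL_1(R_0)$ from \autoref{prop:phi-C-alge}.
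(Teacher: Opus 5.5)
Your proposal is correct and follows the paper's proof. The steps match: $\underline{GL}_1(R)$ is a $\Cp$-algebra by \autoref{thm:units_spec_LU-algebra}; it is grouplike because its $\pi_0$ at level $G/H$ is $\pi_0^H(R)^{\times}$; and applying \autoref{thm:pre-machine} with $gl_1(R)=\mathbb{E}^{pre}\underline{GL}_1(R)$ gives the levelwise equivalence. Your extra step of replacing $\mathscr{C}$ by $\mathscr{C}\times\mathscr{K}$ and pulling back along the projection secures the hypothesis that the operad acts naturally on infinite loop spaces; this is a legitimate detail the paper leaves implicit, not a different route.
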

\begin{proof} By assumption, $R$ is an algebra over an
 $\mathrm{E}_{\infty}$ $G$-operad $\sC$.
 By \autoref{thm:units_spec_LU-algebra}, 
  $\ul{GL}_1(R)$ is a $\Cp$-algebra. It is grouplike since
$\pi_0 (\ul{GL}_1(R)(G/H)) \cong \pi^H_0(R)^{\times}$
is an abelian group for $H \subset G$.
Applying \autoref{thm:pre-machine}, let 
\begin{equation*}
gl_1(R) = \mathbb{E}^{pre}(\ul{GL}_1(R)).
\end{equation*}
Then $  \ul{GL}_1(R) \simeq \fixpt(\OM^\infty gl_1(R))$, as desired.
\end{proof}

\section{Picard and Brauer $G$-spectra of $E_{\infty}$ ring $G$-spectra}\label{PICBR}
\subsection{Permutative $G$-categories and algebraic $K$-theory $G$-spectra}\label{Perm}

Our construction of Picard and Brauer $G$-spectra is analogous to the construction of equivariant algebraic $K$-theory via infinite loop space theory that is given in \cite[Section 4.4]{GM3}.  We review that here, since a slightly new perspective on how  the difference between classical (alias naive) and genuine $G$-spectra plays out in equivariant infinite loop space theory will be a focal point of our discussion.  

Let $\sE S$ denote the indiscrete (alias chaotic) category of a set of objects $S$; there is a unique morphism 
$s\rtarr t$ for $s,t\in S$.  Being old-fashioned, we write $B$ for the classifying space functor, the realization of the nerve, from categories to spaces.   Then $B\sE S$ is contractible for any $S$.   We write $EG =  B\sE(G)$. It is a contractible space with a free (right) action by $G$, and it is the universal cover of its orbit space $BG = EG/G$.  Of course, as we shall occasionally use, the construction generalizes directly to give the principal $G$-bundle $EG\rtarr BG$ of a topological group $G$. In what follows, we mostly revert to our assumption that $G$ is finite, but we shall sometimes use that our constructions work the same way more generally.

The permutativity operad $\sP$ in $\sC at$ has $\sP(j)=E\sM(j)=E\SI_j$, where $\sM$ is the associativity operad. For now, we write  $\sP^{top}$ for the operad in spaces obtained by applying $B$ to $\sP$. (We will sometimes omit the superscript `top' later.)   It is an $E_{\infty}$ operad in the category $\sU$ of (unbased) spaces.  This is the Barratt-Eccles operad.   We regard spaces as $G$-trivial $G$-spaces. As noted in \cite[Remark 5.62]{KMZ1}, we have a classical analog of \autoref{machine}, giving an infinite loop space machine from  $\sP^{top}$-$G$-spaces (possibly $G$-trivial and possibly topological) to classical $G$-spectra.  Analogously, we have a classical analog of \autoref{thm:pre-machine}, giving an infinite loop space machine from $\bP^{pre}$-orbital presheaves to classical $G$-spectra.

Returning to categories, a $\sP$-$G$-category (again possibly $G$-trivial and possibly topological) $\sA$ is a classical (alias naive),  permutative $G$-category.  As in \cite[Definition 4.4]{GM3}, define $\sP_G$ to be the operad $\sC at(\sE G, \sP)$. By definition, a {\em {genuine}}  permutative $G$-category is a $\sP_G$-$G$-category, and that is what we understand to be the right meaning of a permutative $G$-category when $G$ is finite.  

\begin{defn}\label{classtogen}  For a classical permutative $G$-category $\sA$, let $\sA_G = \sC at(\sE G, \sA)$.
As in \cite[Proposition 4.6]{GM3}, $\sA_G$ is a genuine permutative $G$-category, and these are the canonical   examples. Define $\io \colon \sP\rtarr \sP_G$ and $\io \colon \sA \rtarr\sA_G$ to be the functors induced by the projection from $\sE G$ to the trivial $G$-category $*$.  The first is a map of operads that induces a forgetful functor $\io^*$ from $\sP_G$-$G$-categories to $\sP$-$G$-categories, and the second is a map of $\sP$-categories
$\io \colon \sA \rtarr \io^*\sA_G$.
\end{defn}

Observe that $(B\sA_G)^G$ is a model for the homotopy fixed point space of the $G$-space $B\sA$. 
Write $\sP_G^{top}$ for the operad  in $G$-spaces obtained by applying $B$ to $\sP_G$. It is an $E_{\infty}$ operad in the category $G\sU$ of (unbased)  $G$-spaces.  The classifying space of a $\sP_G$-$G$-category $\sB$ is a $\sP_G$-$G$-space. Via \autoref{machine}, it has an associated $G$-spectrum as in \cite[Section 5.6]{KMZ1}.

\begin{defn}\label{Kthy1A}  The classical $K$-theory $\bK\sA$ of a $\sP$-$G$-category $\sA$ is the classical $G$-spectrum of the $\sP^{top}$-space $B\sA$.   Its $0$th space is a group completion of $B\sA$.  The (genuine) $K$-theory  $\bK_G\sB$ of a $\sP_G$-$G$-category  $\sB$ is the (genuine) $G$-spectrum  of the $\sP_G^{top}$-$G$-space $B\sB$.   Its zeroth $G$-space is a group completion of $B\sB$.  In particular, we have the $K$-theory $G$-spectrum $\bK_G\sA_G$ of a  $\sP$-$G$-category $\sA$.
\end{defn}

\begin{rem} As in \cite[Remark 4.11]{GM3}, we define a symmetric monoidal $G$-category to be a pseudoalgebra over $\sP_G$.   A classical symmetric monoidal category is a pseudoalgebra over $\sP$. It is classical category theory that classical symmetric monoidal categories can be strictified to classical permutative categories.  The genuine analog is  proven in \cite{GMMO1}.   Thus  there is no loss of generality in restricting attention to permutative $G$-categories of either type.
\end{rem}

By \cite[Theorem 4.15]{GM3}, for $\sC_G$-$G$-spaces $\sA$ and $\sB$, there is a natural weak equivalence
$$\bK_G(\sA\times \sB) \rtarr \bK_G\sA \times \bK_G\sB$$ 
of genuine $G$-spectra. The analogous proof shows that for $\sC$-$G$-spaces $\sA$ and $\sB$, there is a natural
weak equivalence
$$\bK(\sA\times \sB) \rtarr \bK\sA \times \bK\sB$$ 
There are other formal similarities between $\bK$ and $\bK_G$.

\subsection{The homotopy limit problem}\label{lim}

However, we are interested in the comparison between $\bK$ and $\bK_G$.  Understanding this boils down to understanding an example of the homotopy limit problem, as articulated by Thomason  \cite{Thom}; see also Merling \cite{Mer}.  

Write $i^*$  for the forgetful functor from genuine $G$-spectra to classical $G$-spectra. Then \cite[Theorems 2.24 and 4.16]{GM3} show that, for a $\sP_G$-$G$-category $\sB$, such as $\sA_G$ for a classical permutative category 
$\sA$, there is a natural weak equivalence
 \begin{equation}\label{Compone} 
  \bK\io^*\sB \rtarr i^*\bK_G\sB
 \end{equation}
of classical $G$-spectra.   Here $\io^*\sB$ denotes $\sB$ regarded as $\sP$-category by restriction along $\io\colon  \sP\rtarr \sP_G$.  Remembering that $\sA_G = \sC at(\sE G, \sA)$,  we have the composite
\begin{equation}\label{Yeah2}
\xymatrix@1{
\bK \sA \ar[r]^-{\bK \io}  & \bK\io^*\sA_G \ar[r]^-{\htp} &  i^*\bK_G\sA_G\\} 
\end{equation}

The map $\bK{\io}$  is sometimes an equivalence \cite[Proposition 4.7 and Example 4.19]{GM3} and sometimes not \cite[Proposition 4.19]{GMM}.  We introduce the following name.                         

\begin{defn}\label{amenable} We say that a classical permutative $G$-category $\sA$ (possibly $G$-trivial) is 
{\em {amenable}}  if the functor $\io\colon \sA \rtarr \io^*\sC at(\sE G, \sA)$ induces a weak equivalence on passage to classifying $G$-spaces.  The induced map $\bK\io$ of \autoref{Yeah2}  is then a weak equivalence.
\end{defn}

The problem of determining whether or not $\sA$ is amenable can be viewed as an example of the homotopy limit problem formulated by Thomason \cite{Thom}.
Following Street \cite{Street}, his \cite[Section 2]{Thom} identifies $\sC at_G(\sE G,\sA)$ as the ``$laxlim_G(\sA)$" and gives an explicit description. The subscript $G$ means that he is looking nonequivariantly at the $G$-equivariant functors.  The limit here is just passage to $G$-fixed points.   As he notes in \cite[Section 3]{Thom}, but generalizing by allowing a non-trivial action of $G$ on $\sA$,  after applying $B$ and an infinite loop space machine, the functor $\io^*$ induces a map of classical $G$-spectra.  The homotopy limit problem asks how close this map is to being a weak equivalence.   

For a $G$-spectrum $E$ and a $G$-space (or spectrum) $X$,  the map  $EG_+ \rtarr  S^0$ induces a map $E^*(X)\rtarr E^*(EG_+\sma X)$, and one can ask what this map does.  The  Atiyah-Segal completion theorem, the Segal conjecture, and other such results  are examples where this question is answered in terms of suitable completions.  When $X=S^0$, this is a topological variant of Thomason's categorical problem. Thomason shows that the Quillen-Lichtenbaum conjecture is another example of the problem. 

\begin{sch}\label{Lenz} The abstract of  \cite{Lenz} says ``We prove that through the eyes of equivariant {\em weak} equivalences the genuine symmetric monoidal G-categories of \cite{GMMO1} are equivalent to just ordinary symmetric monoidal categories with G-action."  However, \cite[Proposition 4.7]{GM3}  implies that, with the definition of equivariant {\em weak} equivalences given in  \cite[Definition 7.1]{Lenz}, $\io \colon \sA \rtarr \sC at(\sE G,\sA)$ is itself a {\em weak} equivalence for all  $G$-categories $\sA$.  Thus his definition of {\em weak} equivalence is so weak that it is blind to the homotopy limit problem. 
\end{sch} 

We will only be interested in the case when $G$ acts trivially on the set of objects of $\sA$, meaning that each object $A$ is itself a $G$-object.  In that case, 
$$\sC at_G(\sE G,\sA) = \sC at(\sE G,\sA)^G$$ 
is the $G$-fixed point category obtained by passing from morphism $G$-functors to morphism functors. We can restrict from $G$ to $H\subset G$ before passing to fixed points, and then the problem asks more generally whether or not passage to fixed points results in a weak equivalence of $H$-fixed point spectra for all $H\subset G$.  Adding in maps between orbits, the amenability question asks for which classical permutative $G$-categories $\sA$ the maps 
$$\io\colon \sA^H \rtarr \io^*\sC at(\sE G,\sA)^H = \io^*\sA_G^H$$
induce a weak equivalence of orbital presheaves  upon application of the classifying space functor $B$.  On the left, to be pedantic, we should write $\sA|_{H}$ for the implicit restriction of the $G$-category $\sA$ to an $H$-category. 

\begin{rem} At this writing, we have not thought seriously about when amenability holds.  The idea is that any $\sP$-$G$-category $\sA$ gives rise to a $\sP_G$-$G$-category $\sA_G$ and we ask how far they are from giving equivalent classical $G$-spectra.  There is a converse question.  Start with any genuine $\sP_G$-algebra $\sB$ and regard it as a $\sP$-algebra $\io^*\sB$.  Is it amenable?  We think the answer should be yes.  As noted above,  \cite[Proposition 4.7]{GM3} shows that this is true when $\sB = \sC at_G(\sE G,\sA)$ for some $\sP$-algebra $\sA$.
\end{rem}

For a $G$-ring $R$ (possibly $G$-trivial), the disjoint union of the $GL(n,R)$ is a classical permutative $G$-category $\sG\sL(R)$ under block sum of matrices.   We abbreviate notation by writing
\begin{equation}\label{KR}
 \bK(R) = \bK(\sG\sL(R)) \ \ \text{and} \ \  \bK_G(R) = \bK_G(\sG\sL(R)_G).
\end{equation}
The first is a classical $G$-spectrum whose $0$th $G$-space is a group completion of $B\sG\sL(R)$, and the second is a genuine $G$-spectrum whose $0$th $G$-space is a group completion of $B\sG\sL(R)_G$. 

In particular $R$ might be a $G$-field $E$, where $E$ is a Galois extension of a field $F$ with Galois group $G$.  Then \cite[Example 4.20]{GMM} (see also \cite[Example 4.19]{GM3}) gives that passage to fixed point categories 
gives an equivalence 
$$ GL(n, E^H) \rtarr \sC at(\sE G, GL(n,E))^H$$
for each $H\subset G$.
In fact, \cite{GMM} explains that this is a direct consequence of Serre's general version of Hilbert's Theorem 90    \cite[Ch. 10, Prop. 3]{Serre} and \cite[Section 6.2]{Mer} reproves Serre's result in our context.   Therefore  
$\sG\sL(E)$ is amenable.  This gives the desired conclusion that the natural map
$$ \bK(E^H) \rtarr  (\bK_G E)^H$$
is a weak equivalence for all $H\subset G$.    

It is natural to extend the theory from $G$-rings $R$ to $G$-ring spectra $R$.  We say more about what we mean by those in \autoref{GPicR}.  We define $GL(n,R)$ to be the topological group of automorphisms of the $R$-module $R^n$.   The disjoint union over $n$ of these topological categories gives a permutative $G$-category $\sG\sL (R)$.  With $\sA = \sG\sL (R)$, the discussion above generalizes directly.    This raises the following natural question.

 \begin{quest}[Spectral Hilbert 90]\label{Spec90} Let $R$ be a commutative ring $G$-spectrum.   Defining $\bK(R)$ to be the classical $G$-spectrum obtained from the $\sP$-$G$-category $\sG\sL(R)$ and $\bK_G(R)$ to be the genuine $G$-spectrum obtained from the $\sP_G$-$G$-category  $\sC at(\sE G, \sG\sL(R))$, what conditions ensure that $\sG\sL(R)$ is amenable, so that the natural map
 $$\bK(R^H) \rtarr  (\bK_G R)^H$$
 is a weak equivalence for all $H\subset G$?
 \end{quest}

\subsection{Picard categories and nonequivariant Picard spectra}\label{PicR}

The results of this section were long common knowledge, but we mostly follow the exposition in Szymik \cite{Szymik}. 
Let $R$ be a  commutative $S$-algebra, in the sense of \cite{EKMM}. That is, $R$ is a commutative monoid in the symmetric monoidal category of spectra defined there.   (We can assume that $R$ is cofibrant, and  all spectra are fibrant in the context of \cite{EKMM}.)   We have the groupoid $\sP ic(R)$ of invertible $R$-modules and isomorphisms between them.  Its object set consists of one cofibrant choice from each weak equivalence class of invertible $R$-modules.  Its space of maps $M\rtarr N$ is the space of isomorphisms of $R$-modules. We denote its classifying space (the realization of its nerve) as ${\bf{Pic}}(R)$.  Thus $\pi_0 {\bf{Pic}}(R)$ is the Picard group $Pic(R)$ of equivalence classes of invertible $R$-modules, which is an abelian group under smash product over $R$.   

Since $\sP ic(R)$ is symmetric monoidal under the smash product,  applying the Segal or (replacing our groupoid by an equivalent permutative groupoid) the operadic infinite loop space machine,  we  obtain a spectrum ${\bf{pic}}(R)$ whose zeroth space $\OM^{\infty}{\bf{pic}}(R)$ is equivalent to  ${\bf{Pic}}(R)$.  Since all components of the grouplike 
$E_{\infty}$ space ${\bf{Pic}}(R)$ are equivalent and the component of the basepoint $R$ is the group $GL_1(R)$ of self-equivalences of $R$, we have
$$  {\bf{Pic}}(R) \htp Pic(R) \times BGL_1(R). $$
Therefore the loop space of ${\bf{Pic}}(R)$ is $GL_1(R)$, giving this space a possibly different infinite loop space structure from the one considered in \autoref{Gunitsp} with $G=e$.

When  $R$ is the sphere spectrum (or any other $E_{\infty}$ ring spectrum that comes in the same way from an ``$\sI$-space"), these two infinite loop structures are shown to be equivalent in \cite{MayIMon}.  The ideas in that paper can be modernized to show that this remains true for any commutative $S$-algebra $R$, but the details would take us too far afield.  We assume that is true, or we replace the infinite loop space of the previous section with the new one just obtained. Since $\OM {\bf{Pic}}(R) =GL_1(R)$,  $\pi_{n+1}{\bf{Pic}}(R) \iso \pi_n GL_1(R)$ if $n\geq 0$.  Note that $\pi_n(GL_1(R)) = \pi_n(R)$ if $n\geq 1$.  On the spectrum level,  $\SI gl_1(R)$ is a connective cover of ${\mathbf{pic}}(R)$. 

\subsection{Equivariant Picard spectra}\label{GPicR}  We would like an analogous picture equivariantly.  As explained in \cite[Chapter XXIV]{EHCT} and elaborated in \cite{MM},  although it is written non-equivariantly, the theory of \cite{EKMM} applies nearly verbatim equivariantly.   Thus let $R$ be a commutative $S_G$-algebra, that is a commutative monoid in the symmetric monoidal category of $G$-spectra.   

Form the topological $G$-category $\sP ic_G(R)$ of invertible $R$-modules.  The subscript $G$ just indicates that we are working in the category of $G$-spectra.
 Its objects are one chosen (cofibrant) invertible $R$-module from each equivalence class.  The object set is a $G$-set with $G$ acting trivially.  That is, each object is itself a $G$-object.   Its morphism $G$-spaces are given by the nonequivariant isomorphisms, with $G$ acting by conjugation.  Under smash product, this gives a symmetric monoidal $G$-category.   We can rectify it to an equivalent (classical) permutative $G$-category and thus a $\sP$-$G$-category.   Its fixed point category 
$\sP ic(R)$ has the same objects, but now the morphisms are given by the nonequivariant spaces of $G$-isomorphisms between them.  Write $R|_{H}$ for $R$ regarded as an $H$-spectrum. As is easily verified, we then have a functor 
$$ \ul{\sP ic} \colon \sO_G^{op} \rtarr \sC at$$
with 
$$ \ul{\sP ic}(R)(G/H) = \sP ic(R|_{H}).$$

Applying the classifying $G$-space functor to  $\sP ic_G(R)$, we obtain the $\sP^{top}$-$G$-space 
${\bf{Pic_G}}(R)$ and its fixed point orbital presheaf  ${\ul{\bf{Pic}}_G}(R) = \bR{\bf{Pic}_G}(R)$, which is a $\bP^{pre}$-algebra by \autoref{Acom}.  

Remember that $G$ acts trivially on objects, hence so does $H\subset G$, whereas the $H$- fixed points of morphism spaces are obtained by restricting to those morphisms which are $H$-maps.     Inspection of the behavior of $B$ on fixed points shows that
$${\ul{\bf{Pic}}_G}(R)(G/H) = {\bf{Pic}}(R|_{H})$$ 
because
$$  (\sP ic_G(R))^H=\sP ic(R|_{H}).$$
This implies the following result.

\begin{prop}\label{picprop}   Applying the classical infinite loop space machine to the $\sP^{top}$-$G$-space ${\bf{Pic_G}}(R)$ gives a classical $G$-spectrum 
$$ pic(R) = \bK \sP ic_G(R)$$ 
such that
$$  (\bK \sP ic_G(R))^H \htp \bK (\sP ic(R|_{H})) $$
for all $H\subset G$.  
\end{prop}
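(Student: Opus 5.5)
The plan is to reduce everything to the classical machine's behavior on fixed points, together with the fixed point identification $(\sP ic_G(R))^H=\sP ic(R|_{H})$ just established.

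First, after rectification $\sP ic_G(R)$ is a $\sP$-$G$-category, so ${\bf{Pic_G}}(R)=B\sP ic_G(R)$ is a $\sP^{top}$-$G$-space. The classical analog of \autoref{machine} (cf.\ \cite[Remark 5.62]{KMZ1}) gives a classical $G$-spectrum $\bK\sP ic_G(R)$ and a natural group completion ${\bf{Pic_G}}(R)\rtarr \OM^{\infty}\bK\sP ic_G(R)$. The essential point about classical $G$-spectra is that $\sP^{top}$ has trivial $G$-action. Therefore the operadic machine commutes with passage to $H$-fixed points for every $H\subset G$: the monad $\bP^{top}$, the bar construction, and the suspension/loop constructions are all built from products, colimits along closed inclusions, geometric realization, and function spaces out of $G$-trivial spheres. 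Each of these commutes with $(-)^H$ when the operad is $G$-trivial. I would verify this levelwise. This gives a natural equivalence of spectra
$$(\bK \sP ic_G(R))^H\htp \bK\big((B\sP ic_G(R))^H\big).$$
Here the right side is the nonequivariant machine applied to the $\sP^{top}$-space of fixed points.

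Second, I would identify that $\sP^{top}$-space. Since the classifying space functor commutes with fixed points (the nerve is levelwise a product of morphism spaces, and realization commutes with $(-)^H$ for finite $H$), we get
$$(B\sP ic_G(R))^H\iso B\big((\sP ic_G(R))^H\big)=B\sP ic(R|_{H})={\bf{Pic}}(R|_{H}),$$
compatibly with the $\sP^{top}$-actions, because the action on the $G$-category restricts to the action on fixed point categories. Substituting into the previous equivalence yields $(\bK\sP ic_G(R))^H\htp \bK(\sP ic(R|_{H}))$. Varying $H$ and using naturality in maps of orbits $G/K\rtarr G/H$ makes this an equivalence of orbital presheaves, matching ${\ul{\bf{Pic}}_G}(R)$.

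The main obstacle is the first step: making precise that the classical machine commutes with fixed points, including the spectrification step for Lewis--May spectra indexed on a trivial universe. I would handle it by citing the construction $\bE X=\{B(\SI^V,\bC_V,X)\}$ with $V$ running over $G$-trivial representations. With $G$-trivial $V$, fixed points pass through $\bC_V$, which is built from $G$-trivial Steiner operads, through the bar construction, and through $\OM^V$. A secondary point is checking that rectification of the symmetric monoidal $G$-category to a permutative one is compatible with fixed points. I expect this to be routine, since the strictification is functorial and $G$ acts trivially on objects.
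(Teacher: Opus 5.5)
Your proposal is correct and follows the paper's route. The paper derives the result from $(B\sP ic_G(R))^H \iso B\big((\sP ic_G(R))^H\big) = {\bf{Pic}}(R|_{H})$, obtained by inspecting $B$ on fixed points, and leaves implicit the step you make explicit: the classical machine, built from $G$-trivial data, commutes with passage to $H$-fixed points. One refinement of that step: passage to $\SI_n$-orbits in $\bC_V$ commutes with $(-)^H$ because the $G$-trivial operads involved (Steiner, Barratt--Eccles) are also $\SI$-free, so only the trivial $[\al]$ contributes in \autoref{prop:fiberbundle}. $G$-triviality alone does not suffice, as the failure of $(\bN X)^H \iso \bN(X^H)$ for the commutativity operad $\sN$ shows.
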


We want genuine $G$-spectra, so we consider
$$\sC at(\sE G,\sP ic_G(R)) = \sP ic_G(R)_G.$$
We denote its classifying space by ${\bf{Pic_G}}(R)_G$, which is a  $\sP_G^{top}$-$G$-space.  We could apply the direct infinite loop space machine  of \autoref{machine}, but the resulting $G$-spectrum would not have the desired fixed point spectra.  We instead apply the functor $\bR$ to ${\bf{Pic}}_G(R)_G$.  That gives a 
$\bP_G^{pre}$-algebra orbital presheaf by \autoref{Acom}.  

\begin{defn}\label{PicG}  Applying the machine of \autoref{thm:pre-machine} to  $\bR{\bf{Pic}}_G(R)_G$ gives the  genuine Picard $G$-spectrum $pic_G(R)$.   
\end{defn}

This raises the following analog of \autoref{Spec90}.

 \begin{quest}\label{Pic90} Let $R$ be a commutative ring $G$-spectrum.   What conditions ensure that 
 $\sP ic_G(R)$ is amenable, so that the natural map
 $$ pic(R)^H = (\bK \sP ic_G(R))^H  \rtarr  \io^*\big(\bK_G\sP ic_G(R)_G\big)^H = pic_G(R) ^H$$
 is a weak equivalence for all $H\subset G$?  By \autoref{picprop}, the domain is weak equivalent to  
 $\bK (\sP ic(R|_{H}))$.
 \end{quest}
 
 \begin{rem}\label{Comp1}  This is a question about homotopy groups $\pi_n^H$.  Checking that the diagram
 $$\xymatrix{
 \SI\bK(\sG\sL(R)) \ar[d] \ar[r]^-{\io}  & \SI\io^*\bK_G(\sG\sL(R)_G) \ar[d] \\
 \bK(\sP ic_G(R)) \ar[r]^-{\io}   & \io^* \bK_G (\sP ic_G(R)_G)\\}
 $$
 commutes, where the vertical arrows are connective covers, the question in positive degrees is equivalent to \autoref{Spec90}. In degree $0$,  the bottom domain becomes $\ul{Pic}(R)$ and details of the  target are given in \cite{Mer, Thom}.  We have not yet taken a closer look. 
 \end{rem}

\subsection{Azumaya algebras and nonequivariant Brauer spectra}\label{BrR}  Here we describe nonequivariant Brauer spectra.  Up to notation, we largely follow the constructions of Baker, Richter, and Szymik \cite{BRS,  Szymik}, which in part follow Schwede and Shipley \cite{SchMor, ShipMor}. However, as in \cite{GM0} we use topological rather than simplicial enrichment.    The cited papers generalize classical algebraic theory to structured ring spectra. Again let $R$ be a (cofibrant) commutative $S$-algebra \cite{EKMM}.  A  key definition is that of a topological version of an Azumaya $R$-algebra.  Write $A^{o}$ for the opposite of an $R$-algebra $A$ and $F_R(A,A)$ for the endomorphism $R$-algebra of $A$. (It is an $E_1$ $R$-algebra).  The left action of $A$ on $A$ and the right action of $A^{o}$  induce a map  $\mu\colon A\sma_R A^{o} \rtarr F_R(A,A)$ of $R$-algebras.   Say that an $R$-module $X$ is faithful if for an $R$-module $Y$, $X\sma_R  Y =  \ast$ implies $Y=\ast$.  

\begin{defn}\label{Azu}  An $R$-algebra $A$ is a (topological) Azumaya $R$-algebra if the following conditions hold.
\begin{enumerate} [(i)]
\item  $A$ is a dualizable $R$-module.
\item $\mu\colon A\sma_R A^{o} \rtarr F_R(A,A)$ is a weak equivalence.
\item  $A$ is a faithful $R$-module.
\end{enumerate}
\end{defn} 

The idea of (ii) is that $F_R(A,A)$ should be equivalent to  $A\sma_R F_R(A,R)$ and $F_R(A,R)$ should be  equivalent to $A^{o}$.
We define a category $\sB r(R)$.  Its objects are the Azumaya  $R$-algebras $A$, thought of as  the categories  $\sM_A$ of $A$-modules.   It is enriched over $\sM_R$,  with Hom objects the $R$-modules of  $R$-equivalences $\sM_A \rtarr \sM_B$.  The $R$-equivalences send an $A$-module $M$ to the $B$-module  $M\sma_R E$ for some invertible $(A,B)$-bimodule $E$ whose $R$-module structures, induced by its $A$ and $B$-module structures agree.  Note that $\sB r(R)$ is symmetric monoidal under  $\sma_R$.   

The Brauer group is then defined via a topological version of Morita equivalence.  For an $R$-module $M$, let $\sE_R(M) = F_R(M,M)$.   By \cite[Proposition 2.2]{BRS}, if $M$ is a dualizable and faithful cofibrant $R$-module, then  (a cofibrant replacement of) $\sE_R(M)$ is an Azumaya $R$-algebra.   That gives one starting point for the $\infty$ categorical reinterpretation in \cite{GL}.

\begin{defn}\label{Mor} Say that Azumaya $R$-algebras $A$ and $B$ are Morita equivalent if there are dualizable and faithful cofibrant $R$-modules $M$ and $N$ such that
the $R$-algebras $A\sma_R \sE_R(M)$ and  $B\sma_R \sE_R(N)$ are equivalent.  The set (and it is a set) of equivalence classes is the Brauer group $Br(R)$. Its product 
is induced by $\sma_R$, and it is abelian.
\end{defn}

Define the Brauer space $\mathbf{Br}(R)$ to be the classifying space of the category $\sB r(R)$.  Its set of components is an abelian group.   By \cite[Proposition 5.1]{Szymik},  the space of self-equivalences of the category $\sM_R$ is equivalent to the space of invertible $R$-modules.  Observe that invertible $R$-modules are dualizable and faithful.  By \cite[Proposition 5.2]{Szymik}, 
$\pi_0({\mathbf{Br}}(R))$ is isomorphic to $Br(R)$.  

Since  $\sB r(R)$ is symmetric monoidal, we obtain a spectrum ${\mathbf{br}}(R)$ whose zeroth space is equivalent to ${\mathbf{Br}}(R)$.  The component of the base object $R$ in ${\mathbf{Br}}(R)$  is equivalent to the classifying space of the automorphism group  $\sE(R)$, which is equivalent to $\sP ic(R)$. Therefore we can view  
$\bf{br}(R)$ as a delooping of ${\mathbf{Pic}}(R)$, and  $\pi_{n+1}{\bf{Br}}(R) \iso \pi_n {\mathbf{Pic}}(R)$ if $n\geq 0$.\footnote{We have corrected some typos at  the end of \cite[Section 5]{Szymik}.} 
 Thus $\SI {\mathbf{pic}}(R)$ is a connective cover of ${\mathbf{br}}(R)$.        
 
\subsection{Equivariant Brauer spectra}\label{GBrR}

We would like an analogous picture equivariantly, and we mimic the equivariant Picard spectra story, again starting with a commutative $S_G$-algebra $R$.   The definition of Azumaya algebras carries over equivariantly, working in the category of $R$-modules and algebras in the category of $G$-spectra.  The definition of Morita equivalence also carries over, giving us the Brauer group $Br(R)$.

Form the topological $G$-category $\sB r_G(R)$ of Azumaya $R$-algebras. Its objects are one chosen (cofibrant) Azumaya $R$-algebra from each equivalence class.  The object set is a $G$-set with $G$ acting trivially.  That is, each object is itself a $G$-object.   Its morphism $G$-spaces are given by the nonequivariant isomorphisms, with $G$ acting by conjugation.  Checking that the functor $\sma_R$ preserves Azumaya algebras by checking the three required properties, we see that $\sB r_G(R)$ is a classical symmetric monoidal $G$-category.   We can rectify it to an equivalent classical permutative $G$-category and thus a $\sP$-$G$-category.   Its fixed point category  $\sB r(R)$ has the same objects, but now the morphisms are given by the nonequivariant spaces of $G$-isomorphisms between them.  Just as for $\sP ic$,  we then have a functor 
$$ \ul{\sB r} \colon \sO_G^{op} \rtarr \sC at$$
with 
$$ \ul{\sB r}(R)(G/H) = \sB r(R|_{H}).$$

Applying the classifying $G$-space functor to  $\sB r_G(R)$, we obtain the $\sP^{top}$-$G$-space 
${\bf{Br_G}}(R)$ and its fixed point orbital presheaf  ${\ul{\bf{Br}}_G}(R) = \bR{\bf{Br}_G}(R)$, which is a $\bP^{pre}$-algebra by \autoref{Acom}.  

Again as for $\sP ic$, 
$${\ul{\bf{Br}}_G}(R)(G/H) = {\bf{Br}(R|_{H}})$$ 
because
$$  (\sB r_G(R))^H=\sB r(R|_{H}).$$

This implies the following result.

\begin{prop}\label{brprop}   Applying the classical infinite loop space machine to ${\bf{Br_G}}(R)$ gives a classical $G$-spectrum 
$$ br(R) = \bK \sB r_G(R)$$ 
such that
$$  (\bK \sB r_G(R))^H \htp \bK (\sB r(R|_{H})) $$
for all $H\subset G$.  
\end{prop}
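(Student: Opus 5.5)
The plan follows the parallel argument for \autoref{picprop} essentially verbatim, replacing $\sP ic$ by $\sB r$. The proof has three steps.

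First, we have a classical permutative $G$-category, obtained by rectifying $\sB r_G(R)$, on which $G$ acts trivially on objects. Its classifying $G$-space ${\bf{Br_G}}(R)$ is a $\sP^{top}$-$G$-space, and we apply the classical analog of \autoref{machine} noted in \autoref{Perm} (\cite[Remark 5.62]{KMZ1}). This gives a classical $G$-spectrum $br(R)=\bK\sB r_G(R)$ together with a natural group completion ${\bf{Br_G}}(R)\rtarr \OM^{\infty}\bK\sB r_G(R)$ of $G$-spaces. For classical $G$-spectra, $H$-fixed points are computed levelwise from $H$-fixed points of the spaces. Since fixed points commute with the products and realization used in the machine, passage to $H$-fixed points commutes with the machine. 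Consequently $(\bK\sB r_G(R))^H$ is the spectrum obtained by applying the nonequivariant machine to the $\sP^{top}$-space $({\bf{Br_G}}(R))^H$.

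Second, we identify those fixed points using the equality $(\sB r_G(R))^H=\sB r(R|_H)$. The classifying space functor $B$ commutes with passage to $H$-fixed points: $G$ acts trivially on objects, so the nerve in each degree is a disjoint union of products of morphism $G$-spaces, and fixed points commute with finite products, coproducts, and geometric realization. Hence $({\bf{Br_G}}(R))^H\iso B(\sB r(R|_H))={\bf{Br}}(R|_H)$ as $\sP^{top}$-spaces. This is exactly the identity ${\ul{\bf{Br}}_G}(R)(G/H)={\bf{Br}}(R|_H)$ stated above. The operad action is compatible because the $\sP$-action on $\sB r_G(R)$ restricts on fixed point categories to the permutative structure of $\sB r(R|_H)$, which is induced by $\sma_{R|_H}$.

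Third, we combine the two steps: $(\bK\sB r_G(R))^H\htp \bK(\sB r(R|_H))$.

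The main obstacle is the first step. We must justify that, for the classical machine, taking $H$-fixed points of the output $G$-spectrum agrees up to weak equivalence with applying the nonequivariant machine to $H$-fixed points of the input. The cleanest route is to use the bar construction model $B(\Sigma^\infty,\bC,X)$, or its $V$-indexed version restricted to trivial representations. In that model, fixed points commute with each simplicial level, since these levels are built from $\bC$, products, and loops on trivial spheres, and also with realization. If necessary, we would appeal to \cite[Remark 5.62]{KMZ1} for this compatibility rather than reprove it. The rectification step introduces only equivalences of $\sP$-$G$-categories, and those are preserved under fixed points.
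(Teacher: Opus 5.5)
Your proposal is correct and takes essentially the paper's route: the paper deduces the proposition from $(\sB r_G(R))^H=\sB r(R|_H)$ and from the fact that $B$ commutes with $H$-fixed points when $G$ acts trivially on objects, which are your Steps 2 and 3, and it leaves your Step 1 (that the classical machine commutes with passage to fixed points) implicit in its phrase ``This implies.'' The one point you should state explicitly in Step 1 is that $(\bC X)^H\iso\bC(X^H)$ holds because the operad has trivial $G$-action and is $\SI$-free; passage to $\SI_j$-orbits does not commute with fixed points in general, so this is not covered by your appeal to products and realization alone.
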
 

We want genuine $G$-spectra, so we consider
$$\sC at(\sE G,\sB r_G(R)) = \sB r_G(R)_G.$$
We denote its classifying space by ${\bf{Br_G}}(R)_G$, which is a  $\sP_G^{top}$-$G$-space.  We could apply the direct infinite loop space machine  of \autoref{machine}, but the resulting $G$-spectrum would not have the desired fixed point spectra.  We instead apply the functor $\bR$ to ${\bf{Br}}_G(R)_G$.  That gives a $\bP_G^{pre}$-algebra orbital presheaf by \autoref{Acom}.     

\begin{defn}\label{BrG}  Applying the machine of \autoref{thm:pre-machine} to  $\bR{\bf{Br}}_G(R)_G$ gives the  genuine Brauer $G$-spectrum $br_G(R)$.   
\end{defn}

This raises the following analog of Questions \ref{Spec90} and \ref{Pic90}. 

 \begin{quest}\label{Br90} Let $R$ be a commutative ring $G$-spectrum.   What conditions ensure that $\sB r_G(R)$ is amenable, so that the natural map
 $$ br_G(R)^H =(\bK\sB r_G(R))^H  \rtarr  \io^*(\bK_G\sB r_G(R))^H =br_G(R)^H$$
 is a weak equivalence for all $H\subset G$?
 \end{quest}
 
 \begin{rem}\label{Comp2}  This is a question about homotopy groups $\pi_n^H$.  Checking that the diagram
 $$\xymatrix{
 \SI\bK(\sP ic_G(R)) \ar[d] \ar[r]^-{\io}  & \SI\bK_G(\sP ic_G(R)_G) \ar[d] \\
\bK(\sB r_G(R)) \ar[r]^-{\io}   & \io^*\bK_G(\sB r_G(R)_G)  \\}
 $$
 commutes, where the vertical arrows are connective covers, the question in positive degrees is equivalent to \autoref{Pic90}. In degree $0$,  the bottom domain becomes $\ul{B r}(R)$ and a direct check of the  target has not yet been pursued. \end{rem}

Observe that the underlying classical $G$-spectrum $\SI^2gl_1(R)$ is the $1$-connected cover of the classical $G$-spectrum $pic(R)$.  

\section{The monadic Elmendorf construction and the presheaf machine}
\label{sec:presheaf-machine}
Recalling Notations \ref{GrothGcats} and \ref{LAG}, we work mainly in the context of based spaces and based $G$-spaces in this section.   As before, let $G\sO$ be the orbit category of (unbased) $G$-sets $G/H$ and $G$-maps.   Let $\bO_+: G\sO \rtarr G \sT$ be the (covariant)  functor that sends the orbit $G/H$, viewed as an object of $G\sO$, to the based $G$-space  $G/H_+$.  The original (based) Elmendorf functor  is the {\em categorical} bar construction that sends an orbital presheaf $\sX\colon \sO^{op} \rtarr \sT$ to the based $G$-space
$$  \originalE(\sX) = B(\sX, G\sO, \bO_+).$$
Its $G$-action is induced by the $G$-action on the $G$-spaces $G/H_+$.
In \autoref{sec:an-altern-elmend}, following Rubin \cite{RubElm}, we enlarge $\originalE$ to a new functor $\ourE$ which works better
with operad actions. We emphasize that in this section we use the categorical and not the monadic 
bar construction used earlier in the paper.  Thus the central term is a category rather than a 
monad, and the left and right terms are contravariant and covariant functors defined on that category.
See for example \cite[Section 3.1]{MMO}.\footnote{That short section is readable independently of the rest of the paper.} 

Recall that we have monads $\Cmon $ on $G\sT$ and $\Cp$ on
$G\sO^{op}[\sT]$.
In \autoref{sec:comp-natur-transf}, we construct a comparison natural transformation
\begin{equation}\label{maplambda}
\la \colon \ourE\Cp \rtarr \bC \ourE
\end{equation}
that is a weak equivalence when applied to any $\sX$.
In  \autoref{sec:final}, we use $\lambda$ to define a functor 
\begin{equation*}
  \ourE_{\bC}: \Cp\big[\OG^{op}[\sT]\big] \rtarr \Cmon[G\sT]
\end{equation*}
that suitably extends $\la$ and leads to the proof of  \autoref{thm:pre-machine}.

\subsection{An enlarged Elmendorf functor $\ourE$}\label{sec:an-altern-elmend}

There are several possible ways to define $\ourE$. The essential point is to generalize from orbits to general finite $G$-sets in such a way that the pullbacks that feature in the relevant Grothendieck constructions can be used to construct the map $\la$ of  \autoref{maplambda}.   Write $G\UP_*$ for $G\LA_*^{op}$, where $G\LA_{\star}$ is the Grothendieck category constructed in \autoref{LambdaStar}  by taking $\sO$ there to be the category $\OG$ here and taking $\sG_{G/H}$ there to be $G\LA_{G/H}$.  
Thus the objects of $G\UP_*$ are the pairs $(G/H, S)$, where $S$ is a finite $G$-set over $G/H$, given by a $G$-map $\pi\colon S \rtarr G/H$.  A morphism $(\ph, \ps)\colon (G/H, S) \rtarr (G/K, T)$ consists of a $G$-map $\ph\colon G/H\rtarr G/K$ and a map $\ps\colon \ph^* T \rtarr S$ of $G$-spaces over $G/H$.  Composition is as defined in \autoref{LambdaStar}.

The $G$-maps $\pi$ together define a functor $G\UP_* \rtarr \OG$.   It is a left adjoint with right adjoint $\io$ defined by $\io(G/H, S) = (G/H, G/H)$, with $\pi = \id_{G/H}$. That is, $\io$ sends $(G/H, S)$ to $G/H$, regarded as a $G$-set over itself.  Remember that the classifying spaces of adjoint categories are canonically homotopy equivalent. 

We define
$$  \ourE(\sX) = B(\sX\com \pi, G\UP, \bO_+\com \pi),$$
where the action of $G$ on orbits in the third variable gives the action of $G$.   The bar construction $\sE(\sX)$ is the realization of a simplicial $G$-space such that composition of faces, application of $\sX$ on orbits, and passage to $H$-fixed points for $H\subset G$ gives a natural map of orbital presheaves
$$ \nu\colon \bR \ourE(\sX) \rtarr \sX. $$
The functor $\pi \colon G\LA_{\star}^{op}\rtarr \OG$ induces a natural map
$$\pi\colon \ourE \rtarr \originalE$$
such that $\nu_{orig}\com \pi = \nu$, where $\nu_{orig}\colon \bR\originalE(\sX) \rtarr \sX$.  

In general, the input triple of the categorical bar construction transforms by a category of elements construction to a single category whose classifying space is the original two-sided bar construction \cite[Section 3.1]{MMO}.  For example, the category of elements of $\originalE(\sX)$ has objects
$$\coprod_{H} \sX(G/H) \times G/H_+$$
and morphisms
$$ \coprod_{(H,K)} \sX(G/K) \times \OG(G/H,G/K)\times G/H$$
and similarly for $\ourE(\sX)$.  It is easily verified that the adjunction $(\pi,\io)$ induces an adjunction between these categories of elements.  That implies that $\pi$ is a $G$-homotopy equivalence.  Therefore $\nu$ is a homotopy equivalence since $\nu_{orig}$ is a homotopy equivalence.  Alternatively, one can directly imitate the details of the proof in
\cite[Theorem V.3.2]{EHCT} that $\nu_{orig}$ is a homotopy equivalence.

\subsection{The comparison natural transformation $\lambda$}
\label{sec:comp-natur-transf}
We prove the following result in this subsection.

\begin{thm}\label{lambda} For orbital presheaves $\sX$, there is a natural map of $G$-spaces
$$\la\colon \ourE (\Cp \sX) \rtarr \bC\ourE(\sX)$$
such that the following diagram commutes, hence $\la$ is a weak equivalence. 
\begin{equation}\label{ladiag}
  \begin{tikzcd}
    \bR \ourE\Cp \sX \ar[r," \bR \lambda"] \ar[d, "\nu"']& \bR \Cmon 
    \ourE \sX \ar[r,"\cong"] & \Cp \bR \ourE \sX \ar[d,
    "\Cp \nu"] \\
    \Cp \sX \ar[rr, equal]& & \Cp \sX
  \end{tikzcd}
\end{equation}
\end{thm}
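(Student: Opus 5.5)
We construct $\la$ simplicially, working levelwise in the categorical bar constructions. A $q$-simplex of $\ourE(\Cp\sX)=B(\Cp\sX\com\pi, G\UP, \bO_+\com\pi)$ is a point of
$$(\Cp\sX)(G/H_q)\times G\UP\big((G/H_q,S_q),(G/H_{q-1},S_{q-1})\big)\times\cdots\times (G/H_0)_+ .$$
By \autoref{newC1}, a point of $(\Cp\sX)(G/H_q)$ is represented by $(c,\si)$, where $c\in\sCpre_{G/H_q}(T)=\NewC(\bI_{H_q}T)^{H_q}$ for some $G$-set $T$ over $G/H_q$, and $\si\in(\bS_G\sX)(\bI_{H_q}T)$ is a section. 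By \autoref{SecWed}, $\si$ is a tuple $(x_i)$ indexed by the points of $T=\bn^{\be}$, with $x_i\in\sX(G/G_i)$, and this tuple is compatible with the $G$-action. Each $x_i$, together with the remaining simplicial data pulled back along $G/G_i\rtarr G/H_q$ via the functors $\ph^*$ of \autoref{reverseO}, determines a $q$-simplex of $\ourE(\sX)$. This is the step that uses the enlarged category $G\UP_*$ rather than $\OG$: the pullbacks of $T$ along the morphisms in the simplex are again finite $G$-sets over orbits, so they remain objects of $G\UP_*$. We therefore obtain an $n$-tuple of simplices of $\ourE(\sX)$, on which $\GA_{\be}$ acts compatibly with the action on $c$. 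We then define $\la$ on this simplex to be the image of $(c,(y_1,\dots,y_n))$ in $\sC(n)\times_{\SI_n}\ourE(\sX)^n\subset\bC\ourE(\sX)$, using $\NewC(\bn^{\be})=\sC(n)^{\be}$ from \autoref{contraP}.

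Several checks are needed. The definition must respect the coequalizer relations \autoref{newC2}. Automorphisms of $T$ correspond to the $\SI_n$-relation in $\bC$, and injections correspond to basepoint insertions, since the based section has basepoint coordinates that map to the basepoint simplex. The definition must also be compatible with faces and degeneracies. Faces are handled by the naturality of $\bS_\ph$ and $\sCpre_\ph$ in \autoref{Cphi} and \autoref{Sphi}, together with associativity of pullbacks; the last face uses the action of $G$ on orbits. Finally, $\la$ must be $G$-equivariant, which follows because $c$ is $H_q$-fixed and the orbit coordinate $(G/H_0)_+$ carries the $G$-action. Naturality in $\sX$ is then clear.

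To show that \autoref{ladiag} commutes, we pass to $H$-fixed points and use \autoref{prop:fiberbundle}. On $\bR\bC\ourE\sX$, the identification $\om$ of \autoref{prop:phi-C-commute} writes a fixed point as $\sC(n)^{\GA_\al}\times_{\SI_{\bn^\al}}(\ourE(\sX)^n)^{\GA_\al}$. The map $\Cp\nu$ then evaluates each fixed tuple using $\nu$ orbitwise. Tracing a simplex through the top route gives $(c,(\nu y_i))$, and this agrees with $\nu$ applied to $\Cp\sX$ directly. The agreement holds because $\nu$ is defined by composing the morphisms and applying the presheaf, and the pullback description of $\bS_\ph$ is exactly this composite applied coordinatewise.

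The weak equivalence then follows formally. Both vertical maps in \autoref{ladiag} are levelwise weak equivalences: $\nu$ is one by \autoref{sec:an-altern-elmend}, and $\Cp\nu$ is one because $\Cp$ preserves levelwise weak equivalences for $\SI$-free $\sC$, which we verify via the description \autoref{ThisHelps} as a filtered colimit of orbit spaces of free actions. By two-out-of-three, $\bR\la$ is a levelwise weak equivalence, so $\la$ is a weak equivalence of $G$-spaces. The main obstacle is the well-definedness of $\la$ on simplices, namely the compatibility of the section decomposition with pullback along the simplex's morphisms. I would first do the case $q=0$ carefully, using \autoref{phistar2} and \autoref{Orderings2} to fix orderings, and then argue that the higher simplices follow by iterated pullback.
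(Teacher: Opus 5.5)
The overall architecture of your proposal matches the paper's. You define $\la$ levelwise on the bar constructions by pairing the operad coordinate $c$ with simplices built from the coordinates of the section, check the diagram on fixed points via $\bR\bC\iso\Cp\bR$, and conclude by two-out-of-three from $\nu$ and $\Cp\nu$. The gap is in the definition of $\la$ itself, in two related ways.

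First, you have the variance of the bar construction backwards. In $B(\sX\com\pi, G\LA_{\star}^{op}, \bO_+\com\pi)$ the contravariant variable sits at the codomain end of the chain and the orbit point at the domain end. The paper writes $q$-simplices as $\Cp\sX(G/H_0)\times(\ul{\ph},\ul{\ps})\times(G/H_q)_+$ with $\ph_r\colon G/H_r\rtarr G/H_{r-1}$. You instead put $\Cp\sX$ at the domain $G/H_q$ and the orbit point at the codomain $G/H_0$, so the face maps acting on those two coordinates are not defined. Your recipe of precomposing the chain with $G/G_i\rtarr G/H_q$ depends on this misplacement. With the correct variance, $G/G_i$ maps \emph{to} the orbit carrying $[c,\si]$, so attaching $x_i$ would require lifting the chain through that map. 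This is impossible in general: the pullbacks are disjoint unions of orbits, and the orbit point has several lifts.

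Second, and more seriously, your map is not $G$-equivariant, and the reason you give does not hold. On $\ourE(\Cp\sX)$, $G$ acts only on the orbit point, but on $\bC\ourE(\sX)$ it acts diagonally:
$$g[c,(y_1,\dots,y_n)]=[gc,(gy_1,\dots,gy_n)].$$
A formula that keeps $c$ and just carries the orbit point along in each $y_i$ sends the $g$-translate of a simplex to $[c,(gy_1,\dots,gy_n)]$. In general this is not $[gc,(gy_1,\dots,gy_n)]$. The element $c$ is fixed only by a subgroup acting through a twist by a homomorphism to $\SI_n$, so even for $h$ in that subgroup, $hc$ differs from $c$ by a permutation of coordinates. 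With your placement of the ends, translating $c$ cannot cure this either, since the stabilizer of the orbit point contains, and can be strictly larger than, a conjugate of the group fixing $c$.

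The missing idea is to move the section data to the orbit where the point $t$ lives. That is what the Grothendieck functoriality of $\bS_{\star}\sX$ is for: the paper transports the section along the chain, $x=\ul{\ps}^*(y)\in(\bS_{G/H_q}\sX)(T_q)$. The point $t$ then selects the coordinates to use, namely the fiber of $T_q$ over $t$, identified with the fiber of $T_0$ over the image $gH_0$ of $t$. Equivariance forces these coordinates to be paired with $gc$ rather than $c$, and independence of the choice of $g$ in its coset is exactly where the $\GA_{\al}$-fixedness of $c$ is used. With $\la$ defined this way, your fixed-point check of the diagram and the two-out-of-three argument proceed as in the paper.
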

\begin{proof} Write $(\ul{\ph}, \ul{\ps})$ for a sequence 
$$\ps_r\colon T_r\rtarr T_{r-1}  \, \, \text{over} \, \, \ph_r\colon G/H_r \rtarr G/H_{r-1}, \, \, 1\leq r \leq q,$$ 
of morphisms of $G\UP_*$.  Thus we have a sequence of pullback squares
$$ \xymatrix{
T_q \ar[r]^-{\ps_q} \ar[d]_{\pi} & T_{q-1} \ar[r] \ar[d]_{\pi}& \cdots  \ar[r] & T_1 \ar[d]_{\pi} \ar[r]^-{\ps_1} & T_0 \ar[d]^{\pi} \\
G/H_q \ar[r]_-{\ph_q} & G/H_{q-1}  \ar[r] & \cdots \ar[r] & G/H_1 \ar[r]_-{\ph_1} & G/H_0 \\}
$$
The domain and target of $\la$ are realizations of simplicial $G$-spaces 
with $q$-simplices of the form
$$  B_q\big((\Cp \sX)\com \pi, G\LA_{\star}, \bO_+\com \pi\big) = \coprod_{(\ul{\ph}, \ul{\ps})} 
\big(\Cp\sX(G/H_0)\times (\ul{\ph}, \ul{\ps})\times (G/H_q)_+\big)/(\sim)$$
and, using that $\bC$ commutes with geometric realization, 
$$\bC B_q(\sX\com \pi, G\LA_{\star}, \bO_+\com \pi) = \coprod_{(\ul{\ph}, \ul{\ps})} 
\big(\sX(G/H_0)\times (\ul{\ph}, \ul{\ps})\times (G/H_q)_+\big)/(\sim)$$
Recall that
$$\Cp_{G/H}\sX   =\big(\coprod_{T \in G\LA_{G/H}} \sCpGH(T) \times_{\SI_{\bn^{\al}}} (\bS_{G/H}\sX)(T)\big) /(\sim).$$
Write $y\in (\bP\Cp\sX)(T_0)$ as the image  $[c,y]$ of a pair $(c,y)$, where 
$$c\in \sCpre_{G/H_0}(T_0) \ \ \text{and} \ \  y\in (\bS_{G/H_0}\sX)(T_0).$$
By \autoref{Cpre1}, we can interpret $c$ to be an element of $\sC(n)$ fixed by $\GA_{\al}$.  Regarding $\bS_{\star}\sX$ as a functor 
$G\LA_{\star} \rtarr \sT$, we have $x = \ul{\psi}^*(y)$ in $\bS_{G/H_q}(T_q)$.   On $q$-simplices, for $t\in (G/H_q)_+$, we define
$$\la([c,y], (\ul{\ph}, \ul{\ps}), t) = [c, \big(x,  (\ul{\ph}, \ul{\ps}), t\big)] $$
where we again use $[-,-]$ to denote passage to equivalence classes.  It is an exercise to check that $\la$ is a well-defined map of $G$-spaces.

Commuting passage to fixed points past realization,  the commutativity of \autoref{ladiag} is checked by inspection. Its unlabelled isomorphism is given by \autoref{prop:phi-C-commute}. The vertical maps in \autoref{ladiag} are weak equivalences, the right one because $\Cp$ preserves weak equivalences.  It follows that $\bR \lambda$ is a levelwise weak equivalence, which means that $\lambda$ is a weak equivalence.
\end{proof}

\subsection{The presheaf machine}\label{sec:final}

To go from here to monads, we need the following result.

\begin{prop}
  The following diagrams are commutative: 
\begin{equation*}
\xymatrix{
\ourE \ar[r]^-{\ourE\et} \ar[dr]_{\et} & \ourE\Cp \ar[d]^{\la} \\
& \bC\ourE }
\, \, \text{and} \, \, 
\xymatrix{
    \ourE\Cp \Cp \ar[r]^-{\la} \ar[d]_{\bE\mu} &
    \bC \ourE\Cp \ar[r]^-{\bC \la} &
    \bC \bC \ourE \ar[d]^{\mu} \\
    \ourE\Cp \ar[rr]_-{\la} & & \bC \ourE.}
\end{equation*}
\end{prop}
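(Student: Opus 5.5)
The plan is to check both diagrams levelwise on $q$-simplices of the simplicial $G$-spaces whose realizations are $\ourE(-)$. Every map involved ($\ourE\et$, $\ourE\mu$, $\la$, $\bC\la$, and the unit and product of $\bC$) is the realization of a simplicial map, and $\bC$ commutes with geometric realization. It therefore suffices to compare the two composites on a generic $q$-simplex
$$\big(z, (\ul{\ph},\ul{\ps}), t\big), \qquad z\in (\text{input})(G/H_0),\ t\in (G/H_q)_+ .$$
On such simplices $\la$ has the explicit formula
$$\la([c,y],(\ul{\ph},\ul{\ps}),t) = [c,(\ul{\ps}^*y,(\ul{\ph},\ul{\ps}),t)]$$
from the proof of \autoref{lambda}. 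Each diagram then reduces to an identity between elements of the form $[c,\ldots]$ in $\bC B_q$ or $\bC\bC B_q$.

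\textbf{Unit diagram.} For $x\in\sX(G/H_0)$, the unit $\et\colon \sX\rtarr\Cp\sX$ sends $x$ to $[\id,x]$. Here $\id\in\sC(1)$ lies in $\sCpre_{G/H_0}(G/H_0)$, and $x$ is viewed as a section over the orbit $T_0=G/H_0$ via \autoref{SecWed}. Applying $\la$ gives $[\id,(\ul{\ps}^*x,(\ul{\ph},\ul{\ps}),t)]$. I must check that $\ul{\ps}^*x$, computed in $\bS_{G/H_q}\sX(T_q)$, is the image of $x$ under $\sX(\ph_1\cdots\ph_q)$. This uses that the pullback of the identity $G/H_0$ along the $\ph_r$ is the identity of $G/H_q$, so $(\bS_\star\sX)_{\ph}$ restricted to orbits is just the presheaf map. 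Since the unit of $\bC$ is also insertion of $\id\in\sC(1)$, the two sides agree.

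\textbf{Product diagram.} A simplex of $\ourE\Cp\Cp\sX$ has first coordinate $[c,[d_1,y_1],\dots,[d_m,y_m]]$, where
\begin{itemize}
\item $c\in \sCpre_{G/H_0}(T_0)$ and $T_0=\amalg S_i$ is an orbit decomposition;
\item $d_i\in\sCpre_{G/K_i}(S_i)$ and $y_i\in\bS\sX(S_i)$, following the unravelling in \autoref{OGmonad}.
\end{itemize}
Going right then right, $\la$ pulls back the section $([d_i,y_i])_i$ along $\ul{\ps}$. The pullback $\ul{\ps}^*$ on $\bS_\star\Cp\sX$ is computed orbitwise using the presheaf maps $\ph^*$ of $\Cp\sX$. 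Those are in turn $\sCpre_\ph\otimes\bS_\ph$, that is, restrict $d_i$ to fixed points and pull back $y_i$. Then $\bC\la$ acts on each inner term, and $\mu$ applies the structure map $\ga$ of $\sC$. Going down then right, $\mu$ first applies $\ga$ on the $(\NewC)^H$ fixed points, as in \autoref{OGmonad} (using \autoref{cuter2}), and then $\la$ pulls back the combined section. So the diagram commutes provided two things agree:
\begin{enumerate}
\item applying $\ga(c;d_1,\dots,d_m)$ before pulling back along the pullback squares, and
\item pulling back (restricting $d_i$ to smaller fixed points and pulling back $y_i$) before applying $\ga$.
\end{enumerate}
This holds because $\ga$ is a map of $G$-spaces, hence commutes with passage to fixed points and with the restriction maps $\sCpre_\ph$. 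It also holds because pulling back $T_0=\amalg S_i$ along $\ph$ is computed summandwise, the disjoint-union-to-product property \autoref{Sprod}.

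\textbf{Expected obstacle.} The main difficulty is the bookkeeping in the product diagram. The orbit decompositions of $\ph^*S_i$ are not orbits, and their orderings must match the orderings used in defining $\mu$ (Remarks \ref{Orderings} and \ref{Orderings2}). I would handle this by reducing, via \autoref{Acom} and the identification $\bR\bC\cong\Cp\bR$, to the case where inner elements come from $\bR$ of a $G$-space. There the compatibility is the known naturality of $\mu_\bC$ with respect to restriction of fixed points. The general case then follows because every formula involved is written in terms of $\sX(G/K)$ by the same expressions with $X^K$ replaced by $\sX(G/K)$. Finally, I would verify that all maps respect the equivalence relations $\sim$, which follows from the equivariance properties of $\ga$ in \autoref{crude}.
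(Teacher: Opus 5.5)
Your route is the one the paper intends. The paper calls the unit square obvious and the product square a tedious unravelling of $\Cp$ (as in \autoref{Acom}) against $\la$. Your reduction of the product square to equivariance of $\ga$ (hence compatibility with fixed points and the maps $\sCpre_{\ph}$) and to summandwise pullback names the right ingredients.

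\textbf{The unit check does not close.} You obtain $\la(\ourE\et(x,(\ul{\ph},\ul{\ps}),t))=[\id,(\sX(\ph_1\circ\cdots\circ\ph_q)x,(\ul{\ph},\ul{\ps}),t)]$, whereas $\et(x,(\ul{\ph},\ul{\ps}),t)=[\id,(x,(\ul{\ph},\ul{\ps}),t)]$. These are different simplices. Over a chain ending at $G/H_0$, the $\sX$-coordinate of a $q$-simplex of $\ourE\sX$ lies in $\sX(G/H_0)$. Restricting it (along $\ph_1$, say) is the face $d_0$ of the bar construction, not an identity. So ``the two sides agree'' does not follow from what you checked; taken at face value, your computation says the square fails.

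The source is reading the sketched formula for $\la$ in the proof of \autoref{lambda} literally, as restricting $y$ along the whole chain. Read that way, the right-hand composite of the product square would also restrict along the chain twice (once in $\la$, again in $\bC\la$), while the left-hand composite restricts only once. Before either square can be checked, you need a version of $\la$ that is actually a map of simplicial $G$-spaces. In it, the $\sX$-coordinates stay at the $G/H_0$ end, and the $n$ inputs of $c$ are filled by $n$ separate $q$-simplices of $\ourE\sX$, coming from the points of the fiber over $t$ traced back through the pullback squares. With that in hand, the unit square ($n=1$, $c=\id$) is the inspection the paper calls obvious, and your two ingredients are what the product square requires.

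\textbf{The reduction to $\bR X$ is not an argument as stated.} That the formulas look the same with $X^K$ replaced by $\sX(G/K)$ does not transfer an identity from presheaves $\bR X$ to arbitrary $\sX$; a density argument does.
\begin{itemize}
\item Let $Y=\bigvee_H G/H_+\sma\sX(G/H)$, with $G$ acting trivially on $\sX(G/H)$.
\item Send $(gH,a)$, with $gH\in(G/H)^K$, to $\sX(f)(a)$, where $f\colon G/K\rtarr G/H$ has $f(eK)=gH$. These maps form a levelwise surjection $p\colon\bR Y\rtarr\sX$.
\item $\Cp$ and $\ourE$ are built from products, coproducts, quotients and realization, so $\ourE\Cp\Cp(p)$ is surjective.
\item Both composites of the square are natural, so agreement at $\bR Y$ forces agreement at $\sX$.
\end{itemize}
Also, the case $\bR X$ is not already known: $\ourE$ and $\la$ are new, so the square must still be checked there from the formula for $\la$. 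What the reduction buys is simpler bookkeeping. Via $\om$, elements of $\Cp\Cp\bR X(G/H)$ are elements of $(\bC\bC X)^H$, restrictions are inclusions of fixed points, and $\mu$ is operad composition. So the orbit decompositions and orderings you were worried about disappear.
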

\begin{proof}
  The first is obvious and the second is a tedious verification that we will omit. It amounts to  unraveling the definition
  of $\Cp$ as in \autoref{Acom} and checking the compatibility with $\lambda$.
\end{proof}

Recall that a $\Cp$-functor $F$ is a functor with a right action $F\Cp \rtarr F$ such that the evident diagrams commute.
It is an elementary diagram chase to deduce the following result.
\begin{cor} The functor $\bC\ourE$ is a $\Cp$-functor with action the composite
$$ \xymatrix@1{ \bC\ourE\Cp     \ar[r]^-{\bC \la}  &  \bC\bC \ourE \ar[r]^{\mu} & \bC\ourE. \\}
$$
Moreover, $\la\colon \ourE \Cp  \rtarr \bC\ourE$ is a map of $\Cp$-functors.
\end{cor}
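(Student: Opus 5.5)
The plan is to verify the $\Cp$-functor axioms for $\bC\ourE$ and the equivariance of $\la$ by pure diagram chasing, using only the monad axioms for $(\bC,\mu,\et)$ and the two commutative diagrams of the preceding proposition. Write $\rh\colon \bC\ourE\Cp \rtarr \bC\ourE$ for the composite $\mu\ourE\com \bC\la$. The $\Cp$-functor axioms to be checked are the unit law $\rh\com \bC\ourE\et = \id$ and the associativity law $\rh\com \rh\Cp = \rh\com \bC\ourE\mu$ as maps $\bC\ourE\Cp\Cp \rtarr \bC\ourE$.

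For the unit law, we apply $\bC$ to the left triangle of the preceding proposition to get $\bC\la\com \bC\ourE\et = \bC\et\ourE$. Then we compose with $\mu\ourE$ and use the monad unit law $\mu\com \bC\et = \id$. For associativity, we expand $\rh\com\rh\Cp = \mu\ourE\com \bC\la\com \mu\ourE\Cp\com \bC\la\Cp$. Naturality of $\mu$ with respect to $\la$ gives $\bC\la\com \mu\ourE\Cp = \mu\bC\ourE\com \bC\bC\la$. Associativity of $\mu$ then lets us rewrite the expression as $\mu\ourE\com \bC(\mu\ourE\com \bC\la\com \la\Cp)$. Applying $\bC$ to the right square of the preceding proposition replaces the inner composite by $\la\com \ourE\mu$, and this yields $\rh\com \bC\ourE\mu$.

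Finally, we regard $\ourE\Cp$ as a $\Cp$-functor via $\ourE\mu$; its axioms follow from the monad axioms of $\Cp$. That $\la$ is a map of $\Cp$-functors means $\rh\com \la\Cp = \la\com \ourE\mu$. The left side is $\mu\ourE\com \bC\la\com \la\Cp$, which is exactly the top-right path of the second diagram in the preceding proposition, so the identity is that diagram read verbatim.

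The only point requiring care is the bookkeeping of whiskerings in the associativity step, in particular placing the naturality square of $\mu$ correctly. There is no real obstacle beyond that.
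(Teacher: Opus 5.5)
Your proof is correct and is the same elementary diagram chase the paper invokes; the paper omits the details, and your write-up fills them in. The unit and associativity laws for the action $\mu\ourE\com\bC\la$ follow from the two diagrams of the preceding proposition together with the monad axioms and naturality of $\mu$, and the fact that $\la$ is a map of $\Cp$-functors is just the second diagram read verbatim.
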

This allows the following definition.
\begin{defn} 
Define a functor $\ourEonAlg\colon \Cp\big[G\sO^{op}[\sT]\big] \rtarr  \bC[G\sT]$ by
$$ \ourEonAlg(\sX) = B(\bC\ourE, \Cp, \sX).$$
The target is the realization of a simplicial $\bC$-space and is therefore a $\bC$-space \cite[Theorem 12.2]{MayGeo}.   The natural maps
$$ \xymatrix@1{ \ourE \sX & \ar[l] B(\ourE\Cp, \Cp, \sX) \ar[rr]^-{B(\la,\id,\id)} & & B(\bC\ourE, \Cp, \sX) = \ourEonAlg(\sX)\\}
$$
are equivalences of $G$-spaces, the first by a standard extra degeneracy argument  \cite[Theorem 9.10(i)] {MayGeo} and the second because realization of a levelwise equivalence of Reedy cofibrant simplicial $G$-spaces is an equivalence. After application of $\bR$, that implies a (weak) equivalence between the $\Cp$-algebras $\sX$ and $\bR\ourEonAlg(\sX)$. \end{defn}

With this definition in place, the derivation of \autoref{thm:pre-machine} from \autoref{machine} is immediate.

\section{Appendix: The categorical framework}\label{CAT}

\subsection{The Grothendieck construction and fiberwise tensor products}\label{Ofunctors}

We assume that we have a Grothendieck context:

\begin{cont}\label{catassA} Let $\sO$ be a category regarded as a $2$-category with no non-identity $2$-cells and let $\mathrm{Cat}$ be the $2$-category of categories. We assume given a contravariant pseudofunctor $\sG_{\bullet}\colon \sO \rtarr \mathrm{Cat}$.  For a morphism $\ph\colon Y\rtarr X$  in $\sO$, an object $T\in \sG_X$, and a morphism $f\colon T \rtarr U$ in $\sG_X$, we write 
$$ \sG_{\ph}(T) = \ph^*(T)\in \sG_Y \ \ \text{and} \ \ \sG_{\ph}(f) = \ph^*(f) \colon \ph^*(T) \rtarr \ph^*(U).$$ 
\end{cont}

Let $\sV$ be a category. Assuming we are in \autoref{catassA}, the following two definitions give the kind of functors we have concentrated on.
\begin{defn}\label{defn:fiberOversG}
A covariant Grothendieck $\sO$-functor $S_{\star}$ consists of \begin{enumerate}[(i)]
 \item a functor $S_{X}\colon \sG_{X} \rtarr \sV$ for each object $X$ of $\sO$ and
 \item a natural transformation 
                   $	S_\phi\colon S_{Y} \rtarr S_{X}\circ\phi^{*}$ for each morphism \linebreak
                   $\phi\colon Y \rtarr X$ in $\sO$ such that 
                   $S_{id} = \id$ and, for $\rh\colon Z\rtarr Y$, $$S_{\ph\com \rh} = S_{\rh}\com S_{\ph}.$$ 
 More explicitly, the latter composite is
\begin{equation*}
 \begin{tikzcd}
   S_{X} \ar[r,"S_{\ph}"] & S_{Y}\circ\phi^{*} \ar[r,"S_{\rho} \circ \phi^{*}"]
  & S_{Z}  \circ \rh^{*} \circ \phi^{*} \iso S_{Z}  \circ (\phi \circ \rh)^{*},
 \end{tikzcd}
\end{equation*}
where the isomorphism is given by the pseudofunctoriality; the associativity of composition is implied by the equality of $2$-cells that is given as part of the definition of a pseudofunctor.
\end{enumerate} 
A morphism $\et\colon S\rtarr S'$ between such functors consists of natural transformations 
$\et_{X}\colon S_{X} \rtarr S'_{X}$ such that the following diagrams commute.
\begin{equation*}
\xymatrix{
S_{X} \ar[r]^-{S_{\ph}} \ar[d]_{\et_{X}} & S_{Y}\com \ph^* \ar[d]^{\et_{Y}}\\
S'_{X} \ar[r]_-{S'_{\ph}} & S'_{Y}\com \ph^* \\} 
\end{equation*} 
Let $\sG_{\star}[\sV]$ denote the category of such functors and morphisms. 
\end{defn}

\begin{defn}\label{defn:cofiberOversG} A contravariant Grothendieck $\sO$-functor $C_{\star}$ consists of
\begin{enumerate}[(i)]
     \item a functor $C_{X}\colon  \sG_{X}^{op} \rtarr \sV$ for each object $X$ of $\sO$ and
     \item a natural transformation $C_\phi\colon C_{X} \rtarr C_{Y}\circ (\phi^{*})^{op}$ for each morphism \linebreak $\phi\colon Y \rtarr X$ in $\sO$ such that $C_{id} = \id$ and, for $\rh\colon Z\rtarr Y$, 
$$C_{\ph\com\rh} = C_{\rh}\com C_{\ph}.$$
More explicitly, the latter composite is
\begin{equation*}
 \begin{tikzcd}
  C_{X} \ar[r,"C_{\ph}"] & C_{Y}\circ (\phi^{*})^{op}\ \ar[r,"C_{\nu} \circ (\phi^{*})^{op}"]
  & C_{Z}  \circ (\rh^{*})^{op} \circ (\phi^{*})^{op} \iso C_{Z}  \circ ((\phi \circ \rh)^{*})^{op}.
\end{tikzcd}
\end{equation*}
\end{enumerate} 
The isomorphism is given by the pseudofunctoriality.
A morphism $\et\colon C\rtarr C'$ between such functors consists of natural transformations 
$\et_{X}\colon C_{X} \rtarr C'_{X}$ such that the following diagrams commute.
\begin{equation*}
\xymatrix{
C_{X} \ar[r]^-{C_{\ph}} \ar[d]_{\et_{X}} & C_{Y}\com (\ph^*)^{op} \ar[d]^{\et_{Y}}\\
C'_{X} \ar[r]_-{C'_{\ph}} & C'_{Y}\com (\ph^*)^{op} \\} 
\end{equation*} 
Let $\sG^{vop}_{\star}[\sV]$ denote the category of such functors and morphisms. 
 \end{defn}
  
\begin{rem}\label{defn:fiberOversGbis} There are Grothendieck categorical fibrations $\sG_{\star}\rtarr \sO^{op}$ and 
$\sG^{vop}_{\star}\rtarr \sO^{op}$, defined in Definitions \ref{LambdaStar} and \ref{LambdaStarv}, such that the categories $\sG_{\star}[\sV]$ and $\sG^{vop}_{\star}[\sV]$ of Definitions \ref{defn:fiberOversG} and \ref{defn:cofiberOversG} are the respective functor categories.  We are interested in these functor categories, but for the most part we have little interest in these Grothendieck categories.  We have adapted the exposition accordingly.  
\end{rem}

\begin{rem} When constructing $\sG_{\star}^{vop}$, we replace the $\sG_X$ by their opposite categories.  Thinking of morphisms of $\sO$ as ``horizontal'' and morphisms of the $\sG_X$ as ``vertical'', we are taking vertical opposite categories, hence the notation.
\end{rem}

 \begin{defn}
   \label{const:tensorOverLA} 
Given contravariant and covariant $\sO$-functors $C_{\star}$ and $S_{\star}$, we define the fiberwise tensor product $\sO$-presheaf $C_{\star}\otimes_{\sG_{\star}}S_{\star}$ by 
$$(C_{\star}\otimes_{\sG_*}S_{\star})(X) = C_{X} \otimes_{\sG_{X}} S_{X}.$$ 
This is contravariantly functorial on $\sO$ by parts (ii) of \autoref{defn:fiberOversG}: for a morphism 
$\phi\colon Y \rtarr X$, we define $\ph^* = (C\otimes_{\sG_*}S)(\ph)$ to be the composite
 \begin{equation*}
   \xymatrix{
     C_{X} \otimes_{\sG_{X}}S_{X}\ar[r]^-{C_{\ph}\otimes S_{\phi}} &
     C_{Y}\com (\ph^{*})^{op} \otimes_{\sG_{X}} S_{Y} \com \phi^{*} \ar[r]
     & C_{Y} \otimes_{\sG_{Y}} S_{Y}.}
    \end{equation*}
 Here the second arrow is given by an application of the universal
 property of the coequalizer that defines the functor $\otimes_{\sG_{X}}$.
 \end{defn}
 
 The following remark is crucial to conceptual understanding.  
 
\begin{rem}\label{emph} We emphasize that on objects $X$ the definition of the fiberwise tensor product depends only on the functors $C_{X}$ and $S_{X}$, so only on parts (i) of Definitions \ref{defn:fiberOversG} and \ref{defn:cofiberOversG}. Despite the notation, we {\em never} take actual tensor products of functors defined using the horizontal as well as the vertical morphisms in our Grothendieck categories. That would destroy the functoriality on $\sO$, which is the key point of the construction.  Parts (ii) of Definitions \ref{defn:fiberOversG} and \ref{defn:cofiberOversG} provide that functoriality and are therefore separated out in those definitions. 
\end{rem}

The following result is a formal property of Grothendieck constructions that can also be checked directly from the explicit constructions of $\sG_{\star}$ and $\sG_{\star}^{vop}$ that we give below.  We emphasize that pseudofunctors are used to define composition in these categories and to prove its associativity. That is their only use in the present theory, which explains why we have never had to consider them before this Appendix.   

\begin{prop} \label{prop:fiberOversG2}
The functors $S_{\star}\colon \sG_{\star} \rtarr \mathscr{sV}$ are the covariant $\sO$-functors $S_{\star}$ and their natural transformations are the morphisms of the category $\sG_{\star}[\sV]$. 
The functors $C_{\star}\colon \sG^{op}_{\star} \rtarr \mathscr{\sV}$ are the contravariant $\sO$-functors $C_{\star}$ and their natural transformations are the morphisms of $\sG^{vop}_{\star}[\sV]$.
\end{prop}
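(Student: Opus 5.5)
We will prove \autoref{prop:fiberOversG2} by describing the Grothendieck categories $\sG_{\star}$ and $\sG^{vop}_{\star}$ explicitly and then matching their data with Definitions \ref{defn:fiberOversG} and \ref{defn:cofiberOversG}. The objects of $\sG_{\star}$ are pairs $(X,T)$ with $X\in\sO$ and $T\in\sG_X$. A morphism $(X,T)\rtarr(Y,U)$ is a pair $(\ph,f)$ with $\ph\colon Y\rtarr X$ in $\sO$ and $f\colon \ph^*T\rtarr U$ in $\sG_Y$. Composition is defined using the coherence isomorphisms $\rh^*\ph^*\iso(\ph\rh)^*$ of the pseudofunctor, and associativity and unitality come from the pseudofunctor axioms. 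We will treat this standard verification briefly.

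Every morphism of $\sG_{\star}$ factors uniquely as a vertical morphism $(\id,f)$ following a cartesian horizontal morphism $(\ph,\id_{\ph^*T})\colon (X,T)\rtarr(Y,\ph^*T)$. This factorization drives the main step.

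Given a functor $S\colon\sG_{\star}\rtarr\sV$, restriction to the fibers gives the functors $S_X\colon\sG_X\rtarr\sV$ required in part (i). Evaluation on the cartesian morphisms gives maps $S_\ph(T)\colon S_X(T)\rtarr S_Y(\ph^*T)$. These are natural in $T$: for $g\colon T\rtarr T'$ in $\sG_X$, the identity $(\ph,\id)\circ(\id,g)=(\id,\ph^*g)\circ(\ph,\id)$ holds in $\sG_{\star}$, and applying $S$ gives naturality. Functoriality of $S$ on composites of cartesian morphisms gives $S_{\ph\rh}=S_\rh\circ S_\ph$, taken modulo the coherence isomorphism exactly as stated in \autoref{defn:fiberOversG}(ii). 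Unitality gives $S_{\id}=\id$.

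Conversely, given the data $(S_X,S_\ph)$, we define $S(\ph,f)=S_Y(f)\circ S_\ph(T)$. We then check that this respects composition. The composite $(\rh,h)\circ(\ph,f)$ equals $(\ph\rh,\ h\circ\rh^*f\circ\mathrm{coh})$. Functoriality therefore reduces to two facts: naturality of $S_\rh$ applied to $f$, and the cocycle condition for $S_{\ph\rh}$. This reduction is the main bookkeeping obstacle, since the coherence isomorphisms must be tracked carefully. The check is nonetheless routine, and it uses precisely the $2$-cell equality that is built into the definition of a pseudofunctor.

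The two constructions are inverse to each other, because of the unique factorization described above. For natural transformations, a transformation $\et\colon S\rtarr S'$ has components $\et_{(X,T)}$. Naturality with respect to vertical morphisms says that each $\et_X$ is natural. Naturality with respect to cartesian morphisms is exactly the commuting square in \autoref{defn:fiberOversG}. By the factorization, these two conditions are jointly equivalent to full naturality.

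The contravariant statement follows by the same argument applied to $\sG^{vop}_{\star}$. There the fibers are replaced by $\sG_X^{op}$ while the horizontal direction is kept, so that a functor on $(\sG^{vop}_{\star})$ corresponds to vertical contravariance together with maps $C_\ph\colon C_X\rtarr C_Y\circ(\ph^*)^{op}$. Everything dualizes verbatim.
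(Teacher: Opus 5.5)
Your proposal is correct and follows the paper's route. The paper gives no separate argument: it calls the result a formal property of Grothendieck constructions that can be checked directly from Definitions~\ref{LambdaStar} and~\ref{LambdaStarv}, and your factorization $(\ph,f)=(\id,f)\circ(\ph,\id)$ through the chosen cartesian morphisms is exactly that direct check. You also correctly take $S_\ph$ to go $S_X\rtarr S_Y\circ\ph^*$, as in the ``more explicitly'' clause of Definition~\ref{defn:fiberOversG}, and correctly read the contravariant statement as being about functors on $\sG^{vop}_{\star}$ rather than $\sG_{\star}^{op}$, as Remark~\ref{defn:fiberOversGbis} intends.
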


Finally, we recall the explicit definitions of the categories $\sG_{\star}$ and $\sG_{\star}^{vop}$, specializing standard $2$-categorical material.  See Section 8 and especially Theorem 8.3.1 of \cite{bor2} for a careful general exposition of the categorical context. Our starting point is the given pseudofunctor 
$\sG_{\bullet}\colon \sO^{op}\rtarr \mathrm{Cat}(\sV)$.  The cited Theorem 8.3.1 has two versions, covariant and contravariant. The covariant Grothendieck construction applied to $\sG_{\bullet}$ gives a categorical fibration 
$\sG_{\star} \rtarr \sO^{op}$.

\begin{defn}\label{LambdaStar} The objects of $\sG_{\star}$ are the disjoint union over $X\in \sO$ of the objects of the categories $\sG_{X}$; equivalently, they are the pairs $(X,T)$ where $T$ is an object of $\sG_{X}$.
A morphism $(\ph,f)\colon (X,T)\rtarr (Y,U)$ consists of a morphism $\ph\colon Y\rtarr X$ in $\sO$
and a morphism  $f\colon \ph^*T \rtarr U$ in $\sG_Y$.  The composite of $(\ph,f)$ and $(\ps, g)\colon (Y,U) \rtarr (Z,V)$
is $(\ph\com \ps, g\ast f)$, where $g\ast f$ is the composite
$$ \xymatrix @1{ (\ph\com \ps)^*(T) \ar[r]^-{\iso} &  \ps^*\ph^*(T) \ar[r]^-{\ps^*(f)} & \ps^*(U) \ar[r]^-{g} & V \\}. $$
Here the isomorphism is given by pseudofunctoriality, and the associativity of composition follows by diagram chasing from the equality of $2$-morphisms required by pseudofunctoriality.  Projection to the first coordinate gives the functor $\sG_{\star} \rtarr \sO^{op}$.
\end{defn}
  
The contravariant Grothendieck construction applied to the pseudofunctor $\sG_{\bullet}$ gives another categorical fibration 
$\sG_{\star}^{vop} \rtarr \sO^{op}$.  
\begin{defn}\label{LambdaStarv} The objects of $\sG^{vop}_{\star}$ are the same as the objects of $\sG_{\star}$ but the morphisms are obtained by reversing the direction of the morphism $f$ in \autoref{LambdaStar}.  Here morphisms are denoted $(\ph,f)\colon (X,T) \rtarr (Y,U)$, but now $f$ is a morphism $U\rtarr \ph^*(T)$ in $\sG_Y$.  The composite of
$(\ph,f)$ and $(\ps, g)\colon (Y,U) \rtarr (Z,V)$
is $(\ph\com \ps, g\ast^{op} f)$, where $g\ast^{op}f$ is the composite
$$ \xymatrix @1{V \ar[r]^-{g} & \ps^*(U) \ar[r]^-{\ps^*(f)} & \ps^*\ph^*(T) \ar[r]^-{\iso} &  (\ph\com \ps)^*(T)\\}, $$
where the isomorphism is again given by pseudofunctoriality.  Again, projection to the first coordinate gives the functor 
$\sG_{\star}^{vop} \rtarr \sO^{op}$.
\end{defn}

\subsection{A general construction of relevant pseudofunctors}\label{Opsfunctors}
\begin{con}\label{phistar}
Assume that $\sO$ and $\sI$ are subcategories (not necessarily full) of some ambient category $\sF$.  When we perform constructions in $\sF$, like pullbacks, we implicitly assume that they exist. We describe a construction of
a contravariant pseudofunctor $\sI_{\bullet} \rtarr \mathrm{Cat}$. For an object $X$ of $\sO$, define $\sI_X$ to be the category of objects of $\sI$ over $X$ in $\sF$.  That is, an object is a morphism $\pi \colon T\rtarr X$ in $\sF$, where $T\in \sI$. We write $\pi$ generically and generally abbreviate notation by writing objects as $T$.  A morphism $f\colon S \rtarr T$ is a morphism in $\sI$ such that 
$\pi\com f = \pi$ in $\sF$.  Let $\ph\colon Y\rtarr X$ be a morphism of $\sO$.  We define a functor 
$\sI_{\ph} = \ph^*\colon \sI_X \rtarr \sI_Y$. On an object $\pi\colon T \rtarr X$ of $\sI_X$, we define $\pi\colon \phi^{*}(T) \rtarr Y$ via the pullback diagram
\begin{equation}\label{reverse}
\xymatrix{
\ph^*(T) \ar[r] \ar[d]_{\pi} & T \ar[d]^{\pi} \\ 
Y \ar[r]_{\ph} &  X. \\}
\end{equation}
in $\sF$.
On a morphism $f\colon S\rtarr T$ of $\sI_X$, we define 
$$\ph^*(f)\colon \ph^*(S) \rtarr \ph^*(T)$$ 
to be the map over $Y$ given by the map of pullback diagrams in $\sF$ induced by $f$:
$$
\xymatrix{
\ph^*(S) \ar[r] \ar[d]_{\pi} & S \ar[d]^{\pi} \\ 
Y \ar[r]_{\ph} & X \\}
\ \ \rtarr \ \
\xymatrix{
\ph^*(T) \ar[r] \ar[d]_{\pi} & T \ar[d]^{\pi} \\ 
Y \ar[r]_{\ph} &  X. \\}
$$
We require that the morphism $\ph^*(f)$ of $\sF$ be in $\sI$. In our applications, $\sI$ will be the subcategory of injective maps in $\sF$ and the requirement will hold.   Functoriality (that is, vertical functoriality) is easily checked.

It remains to verify pseudofunctoriality (that is, horizontal pseudofunctoriality). See for example \cite[Definition 7.5.1]{bor1} for the definition of a pseudofunctor.  For its unit condition, we can identify $\id_{X}^*$ with $\id \colon \sI_{X} \rtarr \sI_{X}$.  For composable morphisms
$\ph\colon Y\rtarr X$ and $\rh\colon Z\rtarr Y$ of $\sO$, we must define a natural isomorphism $\rh^*\com \ph^* \rtarr (\ph\rh)^*$.  For $S\in \sF_X$, the universal property of the three pullbacks in sight gives the three dotted arrows in the following diagram and then shows that $\xi$ and $\xi^{-1}$ are inverse isomorphisms.
\begin{equation*}
\xymatrix{
\rh^*\ph^*S \ar[rr] \ar[ddr]_{\pi} \ar@<-1ex>@{-->}[dr]_(.65){\xi} & & \ph^*S \ar[dd]^(.35){\pi} |\hole \ar[dr] & \\
& (\ph\rh)^*S \ar[rr] \ar[d]^{\pi} \ar@<-1ex>@{-->}[ul]_(.35){\xi^{-1}} \ar@{-->}[ur]& & S \ar[d]^{\pi}\\
& Z \ar[r]_-{\rh}& Y \ar[r]_-{\si} & X \\}
\end{equation*}
The equality of $2$-morphisms required for a composite of three morphisms of $\sO$ is an immediate verification.  
\end{con} 

\bibliographystyle{alpha}
\bibliography{references}

\end{document}